\definecolor{bole}{rgb}{0.47, 0.27, 0.23}
\definecolor{burgundy}{rgb}{0.5, 0.0, 0.13}
\numberwithin{equation}{section}
\newtheoremstyle{thmlemcorr}{10pt}{10pt}{\itshape}{}{\bfseries}{.}{10pt}{{\thmname{#1}\thmnumber{ #2}\thmnote{ (#3)}}}
\newtheoremstyle{thmlemcorr*}{10pt}{10pt}{\itshape}{}{\bfseries}{.}\newline{{\thmname{#1}\thmnumber{ #2}\thmnote{ (#3)}}}
\newtheoremstyle{defi}{10pt}{10pt}{\itshape}{}{\bfseries}{.}{10pt}{{\thmname{#1}\thmnumber{ #2}\thmnote{ (#3)}}}
\newtheoremstyle{remexample}{10pt}{10pt}{}{}{\bfseries}{.}{10pt}{{\thmname{#1}\thmnumber{ #2}\thmnote{ (#3)}}}
\newtheoremstyle{ass}{10pt}{10pt}{}{}{\bfseries}{.}{10pt}{{\thmname{#1}\thmnumber{ A#2}\thmnote{ (#3)}}}
\theoremstyle{thmlemcorr}
\newtheorem{theorem}{Theorem}
\numberwithin{theorem}{section}
\newtheorem{lemma}[theorem]{Lemma}
\newtheorem{corollary}[theorem]{Corollary}
\newtheorem{proposition}[theorem]{Proposition}
\theoremstyle{thmlemcorr*}
\newtheorem{theorem*}{Theorem}
\newtheorem{lemma*}[theorem]{Lemma}
\newtheorem{corollary*}[theorem]{Corollary}
\newtheorem{proposition*}[theorem]{Proposition}
\newtheorem{problem*}[theorem]{Problem}
\newtheorem{conjecture*}[theorem]{Conjecture}
\theoremstyle{defi}
\newtheorem{definition}[theorem]{Definition}
\theoremstyle{remexample}
\newtheorem{remark}[theorem]{Remark}
\theoremstyle{ass}
\newcommand{\Dcal}{\mathcal{D}}
\newcommand{\Fcal}{\mathcal{F}}
\newcommand{\Hcal}{\mathcal{H}}
\newcommand{\Ical}{\mathcal{I}}
\newcommand{\Lcal}{\mathcal{L}}
\newcommand{\Ocal}{\mathcal{O}}
\newcommand{\Ucal}{\mathcal{U}}
\newcommand{\Sbb}{\mathbb{S}}
\DeclareMathOperator{\essinf}{ess\,inf}
\DeclareMathOperator{\esssup}{ess\,sup}
\DeclareMathOperator{\dist}{dist}
\DeclareMathOperator{\osc}{osc}
\newcommand{\norm}[1]{\|#1\|}
\newcommand{\normB}[1]{\Bigl\|#1\Bigr\|}
\newcommand{\normBB}[1]{\biggl\|#1\biggr\|}
\newcommand{\abs}[1]{|#1|}
\newcommand{\absb}[1]{\bigl|#1\bigr|}
\newcommand{\absB}[1]{\Bigl|#1\Bigr|}
\newcommand{\dprb}[1]{\bigl\langle #1 \bigr\rangle}
\newcommand{\dd}{\;\mathrm{d}}
\newcommand{\N}{\mathbb{N}}
\newcommand{\R}{\mathbb{R}}
\newcommand{\Z}{\mathbb{Z}}
\newcommand{\loc}{\mathrm{loc}}
\newcommand{\toweak}{\rightharpoonup}
\def\Xint#1{\mathchoice 
{\XXint\displaystyle\textstyle{#1}}%
{\XXint\textstyle\scriptstyle{#1}}%
{\XXint\scriptstyle\scriptscriptstyle{#1}}%
{\XXint\scriptscriptstyle\scriptscriptstyle{#1}}%
\!\int} 
\def\XXint#1#2#3{{\setbox0=\hbox{$#1{#2#3}{\int}$} 
\vcenter{\hbox{$#2#3$}}\kern-.5\wd0}} 
\def\dashint{\,\Xint-}
\renewcommand{\epsilon}{\varepsilon}
\renewcommand{\phi}{\varphi}
\title[Morrey extremals]{ Extremal functions for a fractional Morrey inequality: Symmetry properties and limit at infinity}
\author{Alireza Tavakoli}
\address{Mathematical Institute, KTH Royal Institute of Technology }
\email{alirezat@kth.se}
\subjclass[2010]{35R09, 35R11, 31B25, 46E35}
\begin{document}

\begin{abstract}
In a series of articles, Ryan Hynd and Francis Seuffert have studied extremal functions for the Morrey inequality. Building upon their work, we study the extremals of a Morrey-type inequality for fractional Sobolev spaces. We verify a few of the results in the spirit of Hynd and Seuffert concerning the symmetry of extremals and their limit at infinity.

\noindent\textsc{Keywords: fractional Sobolev spaces, H\"older spaces, Morrey's inequality, fractional $p$-Laplacian, Perron solutions}

\vspace{8pt}

\end{abstract}

\maketitle



\section{Introduction}

We consider the following fractional Sobolev class $\Dcal^{s,p}(\R^n)$
$$\Dcal^{s,p}(\R^n) := \left\{ u \in L^1_{loc}(\R^n) \quad : \quad  \normBB{\frac{u(x)-u(y)}{\abs{x-y}^{\frac{n}{p}+s}}}_{L^p(\R^n \times \R^n)}  < \infty \right\}. $$
Whenever $sp>n$, functions in this class have a continuous version and the following Morrey-type inequality holds
\[
[u]_{C^{s-\frac{n}{p}}(\R^n)}:= \sup_{x \neq y}\frac{\abs{u(x)-u(y)}}{\abs{x-y}^{s-\frac{n}{p}}} \leq C(n,s,p) [u]_{W^{s,p}(\R^{n})} \, ,
\]
where
$$[u]_{W^{s,p}(\R^n)}:= \normBB{\frac{u(x)-u(y)}{\abs{x-y}^{\frac{n}{p}+s}}}_{L^p(\R^n \times \R^n)} .$$
In this article, we always work with this continuous version.  The earliest proof of this inequality that we are aware of is due to Peetre \cite{P}. Our main focus is to study the equality case in the sharp inequality
\begin{equation}\label{eq:morrey-ineq}
[u]_{C^{s-\frac{n}{p}}(\R^n)} \leq C_\star [u]_{W^{s,p}(\R^{n})} \, ,
\end{equation}
where $C_\star$ is the best constant for the inequality. In particular, we establish some properties of the functions achieving equality in \eqref{eq:morrey-ineq}, which we call Morrey extremals. 

The seminorms $[u]_{C^{s- \frac{n}{p}}(\R^n)}$ and $[u]_{W^{s,p}(\R^n)}$ are invariant under the following transformations
\begin{itemize}
  \item $u(x) \to -u(x)$.
  \item $u(x) \to u(x)+c$
  \item $u(x) \to \lambda^{\frac{n}{p}-s} u(\lambda x)$, for $\lambda >0$. 
  \item $u(x) \to u(x+a)$. 
  \item $u(x) \to u(Ox)$, for $O \in \Ocal(\R^n)$, an orthogonal transformation of $\R^n$.
\end{itemize}
While applying the combination of these transformations allows us to generate new extremal functions for \eqref{eq:morrey-ineq} from an existing one, not all of these transformations lead to new extremals. The Morrey extremals exhibit certain symmetry properties. In \cite{Hynd} and \cite{Hynd0} it has been shown among other things that a nonconstant extremal for the inequality
\begin{equation}\label{eq:Morrey-ineq-local}
[u]_{C^{1-\frac{n}{p}}(\R^n)} \leq \Lambda(n,p) \left( \int_{\R^n} \abs{Du}^p \right)^{\frac{1}{p}}
\end{equation}
(due to Morrey \cite{M}) exists, and up to translation, rotation, dilation, and multiplication by a constant satisfies
\begin{itemize}
    \item[(i)] $-u(- \mathrm{e}_n)=u(\mathrm{e}_n) = 1$ and $\abs{u(x)}\leq 1$.
    \item[(ii)] $-\Delta_p u = \frac{2^{n-1}}{\Lambda(n,p)^p} \left( \delta_{\mathrm{e}_n} - \delta_{-\mathrm{e}_n} \right) $.
    \item[(iii)] $u$ is symmetric with respect to rotations that fix the $x_n$ axis.
    \item[(iv)] $u$ is anti-symmetric with respect to the $x_n$ variable.
    \item[(v)] $u$ is positive in the half space $\lbrace x \in \R^n \; : \; x\cdot \mathrm{e}_n >0 \rbrace$. 
\end{itemize}
Furthermore, in \cite{Hynd1}, it has been shown that extremal functions for \eqref{eq:Morrey-ineq-local} possess a limit at infinity. After establishing the existence of an extremal for \eqref{eq:morrey-ineq}, our first objective is to reproduce properties $(i)-(iv)$ in the fractional setting. These results are presented in sections \ref{sec:3}, and \ref{sec:sym}. The proofs are straightforward adaptations of certain arguments in \cite{Hynd} and \cite{Hynd0}.  A stability property of \eqref{eq:Morrey-ineq-local} is proved in \cite[Corollary 6.3]{Hynd}. The same argument adapts to the fractional setting, this is presented in Section \ref{sec:4}. 

Our subsequent task is to demonstrate that in dimensions greater than or equal to two, the extremals for \eqref{eq:morrey-ineq} have a limit at infinity. Our approach differs from that of \cite{Hynd1}, and our result is also weaker. Specifically, in \cite{Hynd1}, they establish the existence of a limit at infinity for $p$-harmonic functions in an exterior of a ball when $p>n$. An essential element in their argument is an observation made by Serrin in \cite{S} regarding non-removable singularities of $p$-harmonic functions in punctured domains. Serrin's observation states that if $p>n$, then a $p$-harmonic function in a punctured ball is continuous and satisfies the $p$-Laplace equation with delta Dirac as a right-hand side, that is $\Delta_p u= K \delta$ for some constant $K$. Furthermore, in \cite{Hynd1}, a refinement of this observation by Kichenassamy and V\'{e}ron in \cite{KV} has also been utilized. We have not been able to reproduce these results in the fractional setting. Instead, we make use of the anti-symmetry of the Morrey extremals and implement an idea of Bj\"orn in \cite{Bj} to overcome this difficulty. This is done in Section \ref{sec:6}. At the end of the article with the aid of a maximum principle for anti-symmetric functions, we also prove property $(v)$ in the fractional setting. We could only demonstrate this in dimensions greater or equal to two since our argument uses the existence of a limit at infinity for the extremals. Although we suspect that such property is also true in dimension one, we can not prove this as of right now.

Here we seize the opportunity to mention some relevant works. In \cite{HLL} they have established a decay rate for the extremal functions of \eqref{eq:Morrey-ineq-local}. In the preprint \cite{BPZ} they have shown that $\underset{p \to \infty}{\lim} \Lambda(n,p)^{\frac{1}{p}} = 1$. Where $\Lambda(n,p)$ is the best constant in \eqref{eq:Morrey-ineq-local}. For some studies of a relevant Morrey-Sobolev inequality with $L^\infty$ norm instead of the H\"older seminorm in the left-hand side we refer to \cite{EP,FM,HL}.

Finally let us mention the recent study addressing the the inequality \eqref{eq:morrey-ineq} and its extremals \cite{BPS}. Their work contains various regularity properties of the extremals and they focus on obtaining asymptotic behavior of the sharp constant in \eqref{eq:morrey-ineq} as parameters approach certain limits: $s\to 1$, $s \to \frac{n}{p}$, and $p\to \infty$. 
\subsection{Acknowledgements} The author warmly thanks Erik Lindgren for introducing the problem, proofreading this paper, for his helpful comments, and for long hours of fruitful discussions. I would also like to thank Lorenzo Brasco, Francesca Prinari, and Firoj Sk for sharing a draft version of their paper \cite{BPS}.

During the development of parts of this paper, I have been a Ph.D. student at Uppsala University. In particular, I wish to express my gratitude to the Department of Mathematics at Uppsala University for its warm and hospitable research environment

Parts of this work were done while I was participating in the program geometric aspects of nonlinear partial differential equations at Mittag-Leffler Institute in Djursholm, Sweden during the fall of 2022. The research program is supported by Swedish Research Council grant no. 2016-06596 

\section{Preliminaries}
In this section, after establishing some notation, we recall a proof of a fractional Morrey-type inequality for regional Sobolev seminorms. Additionally, we introduce the concept of weak solutions for the fractional $p$-Laplace equation and review some key properties of these solutions.
\subsection{Notation}
We define the monotone function $J_p:\R \to \R$, for $1<p<\infty$ by
$$J_p(a)=\abs{a}^{p-2}a.$$
We denote by $B(x,r)$, the ball of radius $r$ with center at $x$.\\
Let $\Omega$ be an open subset of $\R^n$. For any function $u \in L^1(\Omega)$, we use the following notations for the average of $u$ over $\Omega$. 
$$\dprb{u}_{\Omega} := \dashint_{\Omega} u(z) \dd z := \frac{1}{\abs{\Omega}}\int_{\Omega} u(z) \dd z.$$
Let $0<s<1$ and $1<p<\infty$. We introduce the following class of functions
\[
\Dcal^{s,p}(\Omega):= \left\lbrace u \in L^1_{\loc}(\Omega) \; : \;  \normBB{\frac{u(x)-u(y)}{\abs{x-y}^{\frac{n}{p}+s}}}_{L^p(\Omega\times \Omega)} < \infty  \right\rbrace.
\]
We use the following notation for the fractional Sobolev seminorm also known as the Aronszajn-Gagliardo-Slobodeckij seminorm
$$[u]_{W^{s,p}(\Omega)}:= \normBB{\frac{u(x)-u(y)}{\abs{x-y}^{\frac{n}{p}+s}}}_{L^p(\Omega\times \Omega)} = \left( \iint_{\Omega \times \Omega} \frac{\abs{u(x)-u(y)}^p}{\abs{x-y}^{n+sp}} \dd x \dd y \right)^{\frac{1}{p}}.$$
The following H\"older seminorm can be viewed as a limiting case of this fractional Sobolev seminorm as $p$ tends to infinity. 
$$[u]_{C^s(\Omega)}:= \underset{x\neq y \in \Omega}{\esssup} \, \frac{\abs{u(x)-u(y)}}{\abs{x-y}^{s}}.$$
The fractional Sobolev norm is defined as follows
\[
\norm{u}_{W^{s,p}(\Omega)} := \norm{u}_{L^p(\Omega)} + [u]_{W^{s,p}(\Omega)}
\]
The Banach space $W^{s,p}(\Omega)$ is defined as the space of measurable functions $u$ such that $\norm{u}_{W^{s,p}(\Omega)} <\infty$. We also need the definition of \textit{tail space}
\[
L^{q}_{\alpha}(\R^n):= \left\lbrace u \in L^{q}_{\loc}(\R^n) \, : \, \int_{\R^n} \frac{\abs{u(x)}^{q}}{(1+ \abs{x})^{n+\alpha}} \dd x < \infty  \right \rbrace.
\]
We have to remark that class $\Dcal^{s,p}(\R^n)\cap C^{s-\frac{n}{p}}_{\loc}(\R^n)$ does not coincide with the completion of $C^\infty_c(\R^n)$ with respect to the $W^{s,p}(\R^n)$ seminorm. One has to factor out the constant functions. See \cite{BGV} for a characterization of this space.
\subsection{Morrey estimate}
The following Morrey-type estimate is essentially contained in \cite{hitch}. See \cite{Simon} for an earlier appearance of it in dimension $n=1$. For the sake of completeness, we include a proof of this.
\begin{proposition}\label{prop:Morrey-estimate}
Let $u \in \mathcal{D}^{s,p}(B(x_0,R))$. Then $u$ has a continuous version in $B(x_0,R)$ and for any $x,y \in B(x_0,R)$ there holds
\begin{equation}\label{eq:regional-morrey}
    \abs{u(x)- u(y)} \leq C r^{s-\frac{n}{p}} [u]_{W^{s,p}\left(B(x_0,R) \right)},
\end{equation}
where $r= \abs{x-y}$ and $C$ is a constant that depends on $n,s,$ and $p$. In particular,
\begin{equation}\label{eq:regional-Morrey-2}
    \frac{\abs{u(x)- u(y)}}{\abs{x-y}^{s-\frac{n}{p}}} \leq C [u]_{W^{s,p}\left( \frac{x+y}{2}, \frac{\abs{x-y}}{2}\right)}.
\end{equation}
\end{proposition}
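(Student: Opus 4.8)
The plan is to run the classical Morrey/Campanato-type argument adapted to the fractional seminorm, building the continuous version as a limit of ball-averages and controlling oscillations by a dyadic telescoping sum. Fix $x, y \in B(x_0, R)$ and set $r = |x-y|$. Rather than working directly with $B(x_0,R)$, the key is inequality \eqref{eq:regional-Morrey-2}, so I would localize: let $z = \tfrac{x+y}{2}$ and $\rho = \tfrac{r}{2}$, so that $x, y \in B(z, \rho) \subset B(z, 2\rho)$, and I only use the seminorm over $B(z,\rho)\times B(z,\rho)$ (one checks $B(z,\rho)\subset B(x_0,R)$ is automatic when $x,y\in B(x_0,R)$, giving the first statement with $B(x_0,R)$ in place by monotonicity of the seminorm in the domain). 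For a ball $B = B(w,\tau)$, write $\langle u\rangle_B = \dashint_B u$. The core estimate is the mean-oscillation bound: for $u \in \Dcal^{s,p}(B)$,
\[
\dashint_B |u(\xi) - \langle u\rangle_B| \dd\xi \;\le\; \dashint_B \dashint_B |u(\xi)-u(\eta)|\dd\xi\dd\eta \;\le\; C\,\tau^{s-\frac{n}{p}}\,[u]_{W^{s,p}(B)},
\]
where the last step is Hölder's inequality in the $(\xi,\eta)$ variables against the weight $|\xi-\eta|^{-(n+sp)}$: indeed
\[
\dashint_B\dashint_B |u(\xi)-u(\eta)|\dd\xi\dd\eta \le |B|^{-2}\Big(\iint_{B\times B}\frac{|u(\xi)-u(\eta)|^p}{|\xi-\eta|^{n+sp}}\Big)^{1/p}\Big(\iint_{B\times B}|\xi-\eta|^{\frac{n+sp}{p-1}}\Big)^{(p-1)/p},
\]
and the last integral is finite precisely because $\frac{n+sp}{p-1} > -n$ always, evaluating to a constant times $\tau^{2n + \frac{n+sp}{p-1}}$; collecting powers of $\tau$ and of $|B|\sim\tau^n$ gives the exponent $s - \frac{n}{p}$ as claimed.

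Next I would telescope over dyadic balls. Fix $w \in B(z,\rho)$ and set $B_j = B(w, 2^{-j}\rho)$ for $j \ge 0$. Since consecutive balls are comparable ($|B_j|\le 2^n |B_{j+1}|$), the standard bound gives
\[
|\langle u\rangle_{B_{j}} - \langle u\rangle_{B_{j+1}}| \le \dashint_{B_{j+1}}|u - \langle u\rangle_{B_j}|\dd\xi \le 2^n \dashint_{B_j}|u-\langle u\rangle_{B_j}|\dd\xi \le C\,(2^{-j}\rho)^{s-\frac{n}{p}}[u]_{W^{s,p}(B_j)}.
\]
Because $B_j \subset B(z,\rho)$ for all $j$, I bound $[u]_{W^{s,p}(B_j)} \le [u]_{W^{s,p}(B(z,\rho))}$, and since $s - \frac{n}{p} > 0$ the geometric series $\sum_j (2^{-j}\rho)^{s-n/p}$ converges to $C\,\rho^{s-n/p}$. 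Hence $(\langle u\rangle_{B_j})_j$ is Cauchy; its limit defines the continuous representative $\tilde u(w)$ (independent of choices by a comparison argument, and continuous by a standard two-point variant of the same estimate), and summing yields $|\tilde u(w) - \langle u\rangle_{B(z,\rho)}| \le C\,\rho^{s-n/p}[u]_{W^{s,p}(B(z,\rho))}$.

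Finally, applying this at $w = x$ and $w = y$ — both legitimate since $x, y \in B(z,\rho)$ and the dyadic balls centered at $x$ or $y$ of radius $\rho = r/2$ all sit inside $B(z,\rho)$... actually I should center the dyadic chains at $x$ and at $y$ with first ball $B(z,\rho)$ itself, i.e. compare $\tilde u(x)$ and $\tilde u(y)$ each to the common average $\langle u\rangle_{B(z,\rho)}$ via chains $B(x, 2^{-j}\rho')$ and $B(y,2^{-j}\rho')$ with $\rho'$ chosen so these are contained in $B(z,\rho)$ — the triangle inequality gives
\[
|\tilde u(x) - \tilde u(y)| \le |\tilde u(x) - \langle u\rangle_{B(z,\rho)}| + |\tilde u(y) - \langle u\rangle_{B(z,\rho)}| \le C\,\rho^{s-\frac{n}{p}}[u]_{W^{s,p}(B(z,\rho))} = C\,r^{s-\frac{n}{p}}[u]_{W^{s,p}(B(z,r/2))},
\]
which is \eqref{eq:regional-Morrey-2}; absorbing $[u]_{W^{s,p}(B(z,r/2))} \le [u]_{W^{s,p}(B(x_0,R))}$ gives \eqref{eq:regional-morrey}. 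The main obstacle is bookkeeping the inclusions of dyadic balls so that at every stage the relevant integration domain is contained in $B(z, r/2)$ (needed to get the sharp local seminorm on the right of \eqref{eq:regional-Morrey-2} rather than the seminorm on all of $B(x_0,R)$); this is a geometric nuisance rather than a real difficulty, and is handled by choosing the initial radius of the chains at $x$ and $y$ to be a fixed fraction of $r$ and noting $\dist(x,\partial B(z,r/2)), \dist(y,\partial B(z,r/2))$ can be small, so one uses chains whose balls all lie in $B(z,r/2)$ by taking that initial radius to be, say, $\min\{r/2 - |x-z|,\ldots\}$ — or more cleanly, enlarge to $B(z, r)$ at the cost of only a constant, since \eqref{eq:regional-Morrey-2} as stated has $B(\tfrac{x+y}{2}, \tfrac{|x-y|}{2})$ and a harmless constant absorbs the difference between radius $r/2$ and radius $r$.
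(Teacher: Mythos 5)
Your oscillation estimate and the dyadic telescoping are a correct adaptation of the Campanato method, and the exponent bookkeeping in the H\"older step does give $\tau^{s-n/p}$. The genuine gap is in the final localization to obtain \eqref{eq:regional-Morrey-2}. You want to run chains of dyadic balls centered at $x$ and at $y$ that stay inside $B(z,\rho)$ with $z=\tfrac{x+y}{2}$, $\rho = \tfrac{|x-y|}{2}$, but $|x-z| = |y-z| = \rho$, so $x$ and $y$ lie \emph{on} $\partial B(z,\rho)$: there is no positive initial radius $\rho'$ for which $B(x,\rho')\subset B(z,\rho)$, and $\min\{r/2 - |x-z|, \dots\} = 0$. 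The proposed repair of ``enlarge to $B(z,r)$ at the cost of only a constant'' does not work either, because domain monotonicity of the Gagliardo seminorm only goes one way: $[u]_{W^{s,p}(B(z,r/2))} \le [u]_{W^{s,p}(B(z,r))}$, and there is no constant in the reverse direction, so a bound in terms of $[u]_{W^{s,p}(B(z,r))}$ does not imply \eqref{eq:regional-Morrey-2}. (Separately, the assertion that $B(z,\rho)\subset B(x_0,R)$ is ``automatic'' is false — take $x,y$ near $\partial B(x_0,R)$ in nearly orthogonal directions — though this is less damaging since \eqref{eq:regional-Morrey-2} is vacuous when the local seminorm is infinite.)

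The paper sidesteps both problems by working with the \emph{intersections} $D(\xi,\rho) = B(\xi,\rho)\cap B(x_0,R)$ rather than plain balls, which are automatically contained in the reference ball regardless of how close $\xi$ is to its boundary; this is the content of the Campanato lemma (Lemma~\ref{lm:campanto}). The proof then delivers \eqref{eq:regional-morrey} directly with the seminorm over all of $B(x_0,R)$, and \eqref{eq:regional-Morrey-2} is deduced afterwards by applying the already-established \eqref{eq:regional-morrey} with $B(x_0,R)$ replaced by $B\bigl(\tfrac{x+y}{2},\rho'\bigr)$ for $\rho' > \tfrac{|x-y|}{2}$ (so that $x,y$ are interior points), and letting $\rho' \downarrow \tfrac{|x-y|}{2}$ using the continuity of $u$ and monotone convergence of the seminorm as the domain shrinks. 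If you want to keep your chain-of-balls presentation, you should adopt this two-step structure: prove the coarse bound over the big ball first (using intersections or some other device near the boundary), then get the sharp local statement by shrinking the reference ball, rather than trying to force the chains into $B(z,r/2)$ from the outset.
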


First, we need the following lemma due to Campanato \cite{Cam}, see also \cite{Meye}. We also include a proof for the convenience of the reader.
\begin{lemma}\label{lm:campanto}
Assume that $0<\alpha<1$ and $u \in L^1\left(B(x_0,R )\right)$. For any point $\xi \in B\left(x_0, R \right) $ define 
\[
D(\xi,\rho):= B(\xi, \rho) \cap B\left( x_0,R) \right).
\]
Let $A$ be the set of Lebesgue points of $u$. Then for $ x,y \in B(x_0, R) \cap A$,
\begin{equation}\label{eq:campanato-holder-estimate}
    \frac{\abs{u(x)-u(y)}}{\abs{x-y}^\alpha} \leq C \sup_{\xi \in B(x_0,R) ,\; 0<\rho \leq 2R } \Bigl \lbrace \rho^{-\alpha}  \dashint_{D(\xi,\rho)} \left |u(z) - \dprb{u}_{D(\xi,\rho)} \right| \dd z  \Bigr \rbrace, 
\end{equation}
where $C$ is a constant that depends only on the dimension. Moreover, if the right-hand side of \eqref{eq:campanato-holder-estimate} is finite, then $u$ has a $C^{\alpha}$ H\"older continuous version. 
\end{lemma}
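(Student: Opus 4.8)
The plan is to run the classical Campanato telescoping argument. Write $M$ for the supremum on the right-hand side of \eqref{eq:campanato-holder-estimate} and assume $M<\infty$, as otherwise there is nothing to prove. The whole proof reduces to controlling, at Lebesgue points $\xi$, the deviation of $u$ from its averages $\dprb{u}_{D(\xi,\rho)}$ across dyadic scales $\rho$, and then comparing two such points.

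The one geometric input I would establish first is a \emph{fatness} bound: there is a dimensional constant $c_n>0$ with $\abs{D(\xi,\rho)}\ge c_n\rho^n$ for every $\xi\in B(x_0,R)$ and every $0<\rho\le 2R$. Indeed $B(\xi,\rho)\cap B(x_0,R)$ contains a ball of radius $\tfrac14\min(\rho,R)$ (take its center a distance $\tfrac14\min(\rho,R)$ from $\xi$ towards $x_0$), and $\min(\rho,R)\ge\rho/2$ because $\rho\le 2R$; together with the trivial $\abs{D(\xi,\rho)}\le\abs{B(0,1)}\rho^n$ this gives $\abs{D(\xi,\rho)}\le C_n\abs{D(\xi,\rho/2)}$ for all admissible $\xi,\rho$. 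This estimate — and the need for it to hold uniformly in $\xi$, including $\xi$ near $\partial B(x_0,R)$ where the sets $D(\xi,\rho)$ degenerate to half-balls and worse — is the only genuinely delicate point; once it is in hand the rest is routine. From it, for $\xi\in B(x_0,R)$ and $0<\rho\le 2R$, writing $\dprb{u}_{D(\xi,\rho/2)}-\dprb{u}_{D(\xi,\rho)}=\dashint_{D(\xi,\rho/2)}\bigl(u-\dprb{u}_{D(\xi,\rho)}\bigr)\dd z$ and enlarging the integration domain gives
\[
\bigl|\dprb{u}_{D(\xi,\rho/2)}-\dprb{u}_{D(\xi,\rho)}\bigr|\le\frac{\abs{D(\xi,\rho)}}{\abs{D(\xi,\rho/2)}}\dashint_{D(\xi,\rho)}\bigl|u-\dprb{u}_{D(\xi,\rho)}\bigr|\dd z\le C_n M\rho^\alpha .
\]

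I would then fix $\xi\in B(x_0,R)\cap A$ and $0<\rho\le 2R$, and sum the previous bound along $\rho,\rho/2,\rho/4,\dots$ to get $\bigl|\dprb{u}_{D(\xi,\rho/2^k)}-\dprb{u}_{D(\xi,\rho)}\bigr|\le C_n M\rho^\alpha\sum_{j\ge0}2^{-j\alpha}$, the series converging precisely because $\alpha>0$; since $\xi$ lies in the open ball, $D(\xi,r)=B(\xi,r)$ for small $r$, so the Lebesgue point property gives $\dprb{u}_{D(\xi,r)}\to u(\xi)$ and, letting $k\to\infty$,
\[
\bigl|u(\xi)-\dprb{u}_{D(\xi,\rho)}\bigr|\le C\,M\rho^\alpha\qquad\text{for }\xi\in B(x_0,R)\cap A,\ 0<\rho\le 2R,
\]
with $C$ depending on $n$ (and on $\alpha$ through the factor $(1-2^{-\alpha})^{-1}$). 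Now for $x,y\in B(x_0,R)\cap A$ put $\rho=\abs{x-y}<2R$. If $\rho\ge R/4$, apply the last display at $x$ and at $y$ with radius $2R$, using that $D(x,2R)=D(y,2R)=B(x_0,R)$ so the two averages cancel, to obtain $\abs{u(x)-u(y)}\le 2C M(2R)^\alpha\le C'M\rho^\alpha$. If $\rho<R/4$, then $D(x,2\rho)\cup D(y,2\rho)\subseteq D(x,4\rho)$ with all radii $\le 2R$; bounding $\dprb{u}_{D(x,2\rho)}-\dprb{u}_{D(x,4\rho)}$ and $\dprb{u}_{D(y,2\rho)}-\dprb{u}_{D(x,4\rho)}$ exactly as in the consecutive-scale step, together with the endpoint terms $\abs{u(x)-\dprb{u}_{D(x,2\rho)}}$ and $\abs{u(y)-\dprb{u}_{D(y,2\rho)}}$ from the last display, and summing, gives $\abs{u(x)-u(y)}\le C M\rho^\alpha$. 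This is \eqref{eq:campanato-holder-estimate}.

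Finally, for the last assertion, suppose $M<\infty$. Since a.e.\ point of $B(x_0,R)$ is a Lebesgue point of $u\in L^1(B(x_0,R))$, the set $A\cap B(x_0,R)$ is dense, and by \eqref{eq:campanato-holder-estimate} the restriction $u|_{A\cap B(x_0,R)}$ is uniformly $\alpha$-Hölder; hence it extends uniquely to a function $\bar u\in C^\alpha(B(x_0,R))$ with $[\bar u]_{C^\alpha(B(x_0,R))}\le C M$. Since $\bar u=u$ on the full-measure set $A$, this $\bar u$ is the claimed Hölder continuous version.
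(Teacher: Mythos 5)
Your proof is correct and runs essentially the same Campanato telescoping argument as the paper: establish a uniform fatness/doubling estimate for the truncated balls $D(\xi,\rho)$, telescope the consecutive averages across dyadic scales, and then compare two points by inserting averages over comparable truncated balls. The only structural difference is that you prove the H\"older bound directly at Lebesgue points (where the Lebesgue differentiation theorem identifies the limit of the averages with $u(\xi)$) and then extend to $B(x_0,R)$ by density, whereas the paper first defines the representative $\hat u(\xi)=\lim_{\rho\to 0}\dprb{u}_{D(\xi,\rho)}$ at \emph{every} $\xi\in B(x_0,R)$, checks it is well defined and continuous, and only then proves $\hat u$ is H\"older; this is a reorganization of the same ideas rather than a different route.

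One point worth retaining from your write-up: you correctly note that the constant $C$ acquires a dependence on $\alpha$ through the geometric series $\sum_{j\ge 0}2^{-j\alpha}=(1-2^{-\alpha})^{-1}$. The paper's statement that $C$ depends only on the dimension, and the intermediate bound $\sum_{i=h}^{k-1}(\rho/2^i)^\alpha\le 2\,\rho^\alpha 2^{-h\alpha}$ used in its proof, implicitly require $\alpha\ge 1$, whereas here $0<\alpha<1$ and the factor $(1-2^{-\alpha})^{-1}$ indeed blows up as $\alpha\to 0^+$. This has no effect downstream, since in the application $\alpha=s-\tfrac np$ is fixed and the Morrey constant is allowed to depend on $n$, $s$, $p$; but your bookkeeping is the precise one.
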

\begin{proof}
Let 
$$M:= \sup_{\xi \in B(x_0,R) ,\; 0<\rho \leq 4R } \Bigl \lbrace \rho^{-\alpha}  \dashint_{D(\xi,\rho)} \left |u(z) - \dprb{u}_{D(\xi,\rho)} \right| \dd z  \Bigr \rbrace.$$
As for $\xi \in B(x_0,R)$ and $2R< \rho \leq 4R$ we have $D(\xi,\rho)=B(x_0,R)$ and $\rho^{- \alpha} < (2R)^{-\alpha}$, it is implied that
$$M= \sup_{\xi \in B(x_0,R) ,\; 0<\rho \leq 2R } \Bigl \lbrace \rho^{-\alpha}  \dashint_{D(\xi,\rho)} \left |u(z) - \dprb{u}_{D(\xi,\rho)} \right| \dd z  \Bigr \rbrace. $$
Observe that for any $\xi \in B(x_0,R)$ and $0<\rho \leq 4R$
\begin{equation}
   b(n) \abs{B(\xi,\rho)}\leq \abs{D(\xi,\rho)} \leq \abs{B(\xi,\rho)},
\end{equation}
for some dimensional constant $b(n)$. One can for example take $b(n)= 8^{-n}$. To elaborate more, for any $\rho \leq R$ the intersection $B(\xi,\rho)\cap B(x_0, R)$ contains a ball of radius $\frac{\rho}{2}$. For $R<\rho \leq 4R$, we have the obvious inclusion $D(\xi,R)\subset D(\xi,\rho)$ which contains a ball of radius $\frac{R}{2}$. Hence, $D(\xi,\rho)$ contains a ball of radius $\frac{\rho}{8}$ for all $0<\rho <4R$. 

 Suppose that $\rho < 4R$ and $h,k \in \Z$ such that $0 \leq h < k $. For any $\xi \in B(x_0,R)$ we can compute
\begin{equation}\label{eq:campanato-cauchy}
\begin{aligned}
    \absB{\dprb{u}_{D\left(\xi,\frac{\rho}{2^k}\right)}- \dprb{u}_{D\left( \xi,\frac{\rho}{2^h}\right)}} =& \left| \sum_{i=h}^{k-1} \dprb{u}_{D\left(\xi,\frac{\rho}{2^{i+1}}\right)} - \dprb{u}_{D\left(\xi,\frac{\rho}{2^i}\right)} \right| \\ 
    &\leq \sum_{i=h}^{k-1} \left| \dprb{u}_{D\left(\xi,\frac{\rho}{2^{i+1}}\right)} - \dprb{u}_{D\left(\xi,\frac{\rho}{2^i}\right)}  \right| \\
    & = \sum_{i=h}^{k-1} \left| \frac{1}{\absb{D(\xi,\frac{\rho}{2^{i+1}})}} \int_{D(\xi,\frac{\rho}{2^{i+1}})} u(z) - \dprb{u}_{D\left(\xi,\frac{\rho}{2^i}\right)} \,\dd z  \right| \\
    & \leq  \sum_{i=h}^{k-1} \frac{1}{\absb{D\left(\xi,\frac{\rho}{2^{i+1}}\right)}} \int_{D\left(\xi,\frac{\rho}{2^{i+1}}\right)} \left|{u(z) - \dprb{u}_{D\left(\xi,\frac{\rho}{2^i}\right)}}\right| \, \dd z \\
    &\leq \sum_{i=h}^{k-1} \frac{\absb{D(\xi,\frac{\rho}{2^{i}})}}{\absb{D(\xi,\frac{\rho}{2^{i+1}})}} \dashint_{D(\xi,\frac{\rho}{2^{i}})} \left|u(z) - \dprb{u}_{D\left(\xi,\frac{\rho}{2^i}\right)} \right| \dd z \\
    &\leq \sum_{i=h}^{k-1} \frac{2^n}{b(n)} \left( \frac{\rho}{2^i}\right)^\alpha M \\
    &\leq 2\cdot 16^{n}M \frac{\rho^\alpha}{2^{h\alpha}} .
    \end{aligned}
\end{equation}
This shows that the sequence $\dprb{u}_{D\left(\xi,\frac{\rho}{2^i}\right)}$ is Cauchy and the following limit exists
\[
\hat{u}(\xi):= \lim_{i \to \infty} \dprb{u}_{D\left(\xi,\frac{\rho}{2^i}\right)}.
\]
The limit is actually independent of $\rho$ as the following computation implies. Let $\xi \in B(x_0,R)$, for any $\rho_1 < \rho_2 \leq 4R$ we have
\begin{equation}\label{eq:campanato-cauchy-middle}
\begin{aligned}
\Bigl| &  \dprb{u}_{D\left(\xi,\rho_1 \right)} - \dprb{u}_{D\left(\xi,\rho_2\right)} \Bigr|\\
&= \left| \dashint_{D\left(\xi,\rho_1\right)} 
  \dprb{u}_{D\left(\xi,\rho_1\right)} -u(z)+
u(z)- \dprb{u}_{D\left(\xi,\rho_2\right)} \, \dd z \right| \\
&\leq  \dashint_{D\left(\xi,\rho_1\right)} \left| u(z)- \dprb{u}_{D\left(\xi,\rho_1\right)} \right| \, \dd z +  \dashint_{D\left(\xi,\rho_1\right)} \left| u(z)- \dprb{u}_{D\left(\xi,\rho_2 \right)} \right| \, \dd z \\
&\leq \dashint_{D\left(\xi, \rho_1 \right)} \left| u(z)- \dprb{u}_{D\left(\xi, \rho_1 \right)} \right| \, \dd z + \frac{\left| D\left(\xi, \rho_2 \right)\right|}{\left|D\left(\xi, \rho_1 \right)\right|} \dashint_{D\left(\xi, \rho_2 \right)} \left| u(z)- \dprb{u}_{D\left(\xi,  \rho_2 \right)} \right| \, \dd z \\
&\leq M\left( \rho_1^\alpha +  8^n \left(\frac{\rho_2}{\rho_1}\right)^n \rho_2^\alpha   \right) 
\end{aligned}
\end{equation}
In particular for any $\rho_2< 4R$, there exists $0\leq h \in \Z$ such that $\frac{4R}{2^{h+1}} \leq \rho_1 < \frac{4R}{2^h}$. Using \eqref{eq:campanato-cauchy} with $\rho= \frac{4R}{2^h}$ and \eqref{eq:campanato-cauchy-middle}, for any $k\in \N$ with $k >h$ we arrive at
\[
\begin{aligned}
    \Bigl|  \dprb{u}_{D\left(\xi,\rho_1 \right)} - \dprb{u}_{D\left(\xi,\frac{4R}{2^k}\right)} \Bigr| &\leq \Bigl|  \dprb{u}_{D\left(\xi,\rho_1 \right)} - \dprb{u}_{D\left(\xi,\frac{4R}{2^h}\right)} \Bigr| + \Bigl|   \dprb{u}_{D\left(\xi,\frac{4R}{2^k} \right)} - \dprb{u}_{D\left(\xi,\frac{4R}{2^h}\right)} \Bigr| \\
    & \leq M \left(  \rho_1^\alpha +  8^n \left(\frac{4R/(2^h)}{\rho_1}\right)^n \left(\frac{4R}{2^h}\right)^\alpha 
 + 2 \cdot 16^n \left(\frac{4R}{2^h}\right)^\alpha \right)\\
 & \leq M\left( \rho_1^\alpha + 16^n (2\rho_1)^\alpha + 2\cdot 16^n (2\rho_1)^\alpha   \right) \\
 & < \rho_1^\alpha M \left(1+ 2\cdot 16^n +  4\cdot 16^n \right)  
\end{aligned}
\]
Hence,
\[
\lim_{\rho \to 0} \dprb{u}_{D\left(\xi,\rho \right)} = \lim_{k \to \infty} \dprb{u}_{D\left(\xi,\frac{4R}{2^k} \right)} = \hat{u}(\xi).
\]
Moreover,
\begin{equation}\label{eq:companato-uniform-conver}
    \left| \hat{u}(\xi) - \dprb{u}_{D\left(\xi,\rho \right)}  \right| \leq C M \rho^\alpha,
\end{equation}
where $C$ is a dimensional constant. As for any fixed radius $\rho$ the functions $\xi \to \dprb{u}_{D\left(\xi,\rho \right)}$ are continuous, the uniform convergence \eqref{eq:companato-uniform-conver} implies that $\hat{u}(\xi)$ is continuous in $B(x_0,R)$.
Also notice that if $\xi$ is a Lebesgue point of $u$ , using Lebesgue's differentiation theorem,
$$u(x)= \lim_{k \to \infty} \dprb{u}_{B(x,\frac{R}{2^k})}.$$
Since for small enough values of $\rho$, $D(\xi, \rho)= B(\xi,\rho)$ we obtain
\[
u(\xi)= \hat{u}(\xi)\quad \text{for a.e.} \quad \xi \in B(x_0,R).
\]
Finally, we show that $\hat{u}$ is H\"older continuous and satisfies the bound \eqref{eq:campanato-holder-estimate}. Let $r:= \frac{\abs{x-y}}{2} $. Notice that $r <  R$.
By the triangle inequality, we have
\[
\abs{\hat{u}(x)- \hat{u}(y)} \leq \left|{\hat{u}(x) - \dprb{u}_{D(x,r)}}\right | + \left| {\hat{u}(y) - \dprb{u}_{D(y,3r)}}\right | + \left|{ \dprb{u}_{D(y,3r)}- \dprb{u}_{D(x,r)}}\right |.
\]

Using \eqref{eq:companato-uniform-conver}
\begin{equation}\label{eq:campanto2}
\begin{aligned}
    \left|{\hat{u}(x) - \dprb{u}_{D(x,r)}}\right | &\leq C M r^\alpha \\
     \left|{\hat{u}(y) - \dprb{u}_{D(y,3r)}}\right | &\leq C M (3r)^\alpha.
    \end{aligned}
\end{equation}
As for the term $\left|{ \dprb{u}_{D(y,3r)}- \dprb{u}_{D(x,r)}}\right |$, noticing that $D(x,r)\subset D(y,3r)$, we have
\begin{equation}\label{eq:campanto3}
\begin{aligned}
   \left|{\dprb{u}_{D(x,r)} - \dprb{u}_{D(y,3r)} }\right| &= \left| \dashint_{D(x,r)} u(z) - \dprb{u}_{D(y,3r)} \dd z \right|   \\
   & \leq \frac{1}{\absb{D(x,r)}} \int_{D(x,r)} \left|{u(z)- \dprb{u}_{D(y,3r)}}\right| \dd z \\
   &\leq \frac{\absb{D(y,3r)}}{\absb{D(x,r)}} \dashint_{B(y,3r)}\left|{u(z)- \dprb{u}_{D(y,3r)}}\right| \dd z \\
   &\leq \frac{3^n}{b(n)} M (3r)^\alpha.
\end{aligned}
\end{equation}
Summing the equations \eqref{eq:campanto2} and \eqref{eq:campanto3} we arrive at 
\[
\begin{aligned}
\abs{\hat{u}(x)-\hat{u}(y)}&\leq C\left( 1 + 3^\alpha  + \frac{3^{n}}{b(n)} 3^{\alpha} \right) M r^\alpha \leq C\left(1+ 3 + 3\cdot 24^n \right) M \frac{\abs{x-y}}{2}^\alpha \\
&\leq C M \abs{x-y}^\alpha,
\end{aligned}
\]
for some $C$ only depending on $n$.
\end{proof}
Now that we have Lemma \ref{lm:campanto} at hand the proof of Proposition \ref{prop:Morrey-estimate} is just an application of the Poincare inequality.
\begin{proof}[Proof of Proposition \ref{prop:Morrey-estimate}]
For any $\xi \in B(x_0,R)$ and $0<\rho \leq 2R $ similar to Lemma \ref{lm:campanto} we introduce 
$$D(\xi,\rho):= B(\xi,\rho)\cap B(x_0,R).$$
First, notice that
\[
\begin{aligned}
\int_{D(\xi,\rho)} \left|{u(z) - \dprb{u}_{D(\xi,\rho)}}\right|^p \dd z &= \int_{D(\xi,\rho)} \left| { \dashint_{D(\xi,\rho)} u(z) - u(w) \dd w}\right|^p \dd z  \\
&\leq \int_{D(\xi,\rho)} \dashint_{D(\xi,\rho)} \abs{u(z) - u(w)}^p \dd w \dd z .
\end{aligned}
\]
As for any $z,w \in D(\xi, \rho)$ we have $\abs{z-w}\leq 2\rho$,
\[ \begin{aligned}
\int_{D(\xi,\rho)} \left|{u(z) - \dprb{u}_{D(\xi,\rho)}}\right|^p \dd z &\leq \frac{(2 \rho)^{n+sp}}{\abs{D(\xi, \rho)}} \iint_{D(\xi,\rho) \times D(\xi,\rho)} \frac{\abs{u(z) - u(w)}^p}{\abs{z-w}^{n+sp}} \dd z \dd w \\
& \leq \frac{8^n \cdot 2^{n+sp}}{ \omega_n} \rho^{sp} \iint_{D(\xi,\rho) \times D(\xi, \rho)} \frac{\abs{u(z) - u(w)}^p}{\abs{z-w}^{n+sp}} \dd z \dd w .
\end{aligned}
\]
Hence,
$$ \dashint_{D(\xi, \rho)} \left|{u(z) - \dprb{u}_{D(\xi,\rho)}}\right|^p \dd z \leq C(n,s,p) \rho^{sp-n} [u]_{W^{s,p}\left(D(\xi, \rho)\right)}^p.$$
Using H\"older's inequality, we arrive at 
\begin{equation}\label{eq:campanato-pre-est}
\begin{aligned}
    \dashint_{D(\xi, \rho)} \left|{u(z) - \dprb{u}_{D(\xi,\rho)}}\right| \dd z &\leq \left( \dashint_{D(\xi,\rho)} \left| u(z)- \dprb{u}_{D(\xi,\rho)} \right|^p \right)^{\frac{1}{p}} \\
    &\leq  C \rho^{s-\frac{n}{p}} [u]_{W^{s,p}(D(\xi, \rho))}.
    \end{aligned}
\end{equation}
  Now we are in a position to use Lemma \ref{lm:campanto}. Recalling that $D(\xi,\rho)\subset B(x_0,R)$, and using \eqref{eq:campanato-pre-est} we can compute
\[
\begin{aligned}
\sup_{\xi \in B(x_0,R) ,\; 0<\rho \leq 2R } \Bigl \lbrace \rho^{\frac{n}{p}-s}  \dashint_{D(\xi,\rho)} \left |u(z) - \dprb{u}_{D(\xi,\rho)} \right| \dd z  \Bigr \rbrace 
&\leq C \sup_{\xi \in B(x_0,R) ,\; 0<\rho \leq 2R } \left \lbrace  [u]_{W^{s,p}(D(\xi, \rho))} \right \rbrace \\
&\leq C [u]_{W^{s,p}(B(x_0,R))}.
\end{aligned}
\]
As $[u]_{W^{s,p}(B(x_0,R))}$ is finite by the assumption, Lemma \ref{lm:campanto} implies that $u$ has a H\"older continuous version and for any $x,y \in B(x_0,R)$
\[
\frac{\abs{u(x)-u(y)}}{\abs{x-y}^{s-\frac{n}{p}}} \leq C [u]_{W^{s,p}(B(x_0,R))}.
\]

\end{proof}
\subsection{Notions of solutions}
In this section, we are concerned with the operator
$$(-\Delta_p)^s u(x):= \mathrm{P.V.} \int_{\R^n} \frac{J_p \left(u(x)-u(y) \right)}{\abs{x-y}^{n+sp}} \dd y.$$
Let $\Omega$ be an open subset of $\R^n$. We introduce two notions of solution for the equation 
$$(-\Delta_p)^s u(x)=0.$$
Namely weak and viscosity solutions. We mainly work with the notion of viscosity solutions except in Lemma \ref{lm:equation}, where it is easier to work with weak solutions. These two notions of solutions turn out to be equivalent under some mild assumption. See \cite{KKL}.

\subsubsection{$(s,p)$-harmonic functions (viscosity solutions)}
We will need some basic properties of viscosity solutions. The definition looks different for small values of $p$. In this article, we shall deal with the range $p\geq 2$ for the most part. Thus, we will give two separate definitions.  
\begin{definition}\label{def:viscosity1}
    Suppose that $0<s<1$ and $p > \frac{2}{2-s}$. Let $\Omega$ be an open subset of $\R^n$. We say that $u:\R^n \to [-\infty , \infty]$ is a viscosity super-solution of 
    \[
    (-\Delta_p)^s u= 0 \quad \text{in}\quad \Omega,
    \]
    or simply $(s,p)$-superharmonic if the following holds:
    
    $(i)$ $u < \infty$ almost everywhere in $\R^n$, and $u > -\infty$ everywhere in $\Omega$.
    
    $(ii)$ $u$ is lower semi-continuous in $\Omega$.

    $(iii)$ $u^- \in L_{sp}^{p-1}(\R^n)$.
    
    $(iv)$ If $\phi \in C^2 \left( B(x_0,r) \right)$ for $B_r \subset \Omega$ is such that $u(x_0)=\phi(x_0)$ and 
    $$\phi(x) \leq u(x) \quad \text{for} \quad x \in B(x_0,r). $$
    Then, 
    $$\mathrm{P.V.} \int_{\R^n} \frac{J_p(w(x_0)- w(y))}{\abs{x_0-y}^{n+sp}} \dd y \geq 0,$$
    where
    \[
    w(x):= \begin{cases}
        \phi(x) \quad &\text{for} \quad x \in B(x_0,r), \\
        u(x) \quad &\text{for} \quad x \in \R^n \setminus B(x_0,r).
    \end{cases}
    \]
\end{definition}  
  A function $u$ is called $(s,p)$-subharmonic in $\Omega$ if $-u$ is $(s,p)$-superharmonic in $\Omega$. We also say that $u$ is $(s,p)$-harmonic in $\Omega$ if $u$ is both $(s,p)$-subharmonic and $(s,p)$-superharmonic in $\Omega$.

 We can treat $(s,p)$-superharmonic functions like classical supersolutions in certain situations, see \cite[Proposition 3.1]{KKL} as well as \cite[Proposition 1]{L}. The following lemma is a simple instance of this property, for which we provide a proof. 
\begin{lemma}\label{lm:viscosity-evaluation}
    Let $0<s<1$ and $p>\frac{2}{2-s}$. Assume that $u$ is $(s,p)$-superharmonic in $\Omega$. If $z_0 \in \Omega$ is such that $u(z_0)$ is a local minimum of $u$, then
    \[
    \int_{\R^n} \frac{J_p(u(z_0)- u(y))}{\abs{z_0-y}^{n+sp}} \dd y \geq 0.
    \]
\end{lemma}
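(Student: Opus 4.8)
The plan is to deduce the conclusion from Definition \ref{def:viscosity1}(iv) by using a constant as the test function at the local minimum point. Since $u$ is $(s,p)$-superharmonic in $\Omega$ and $z_0 \in \Omega$ is a local minimum, there is $r>0$ with $B(z_0,r) \subset \Omega$ such that $u(x) \geq u(z_0)$ for all $x \in B(z_0,r)$. Note first that $u(z_0) > -\infty$ by part (i) of the definition, so the constant $\phi \equiv u(z_0)$ is a legitimate $C^2$ function on $B(z_0,r)$ that touches $u$ from below at $z_0$. Applying (iv) with this $\phi$, the associated function $w$ is
\[
w(x) = \begin{cases} u(z_0) & x \in B(z_0,r), \\ u(x) & x \in \R^n \setminus B(z_0,r), \end{cases}
\]
and (iv) gives
\[
\mathrm{P.V.} \int_{\R^n} \frac{J_p(w(z_0)-w(y))}{\abs{z_0-y}^{n+sp}} \dd y \geq 0.
\]

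The remaining work is to pass from this inequality for $w$ to the desired inequality for $u$, i.e.\ to show
\[
\int_{\R^n} \frac{J_p(u(z_0)-u(y))}{\abs{z_0-y}^{n+sp}} \dd y \geq \mathrm{P.V.} \int_{\R^n} \frac{J_p(u(z_0)-w(y))}{\abs{z_0-y}^{n+sp}} \dd y
\]
together with the fact that the left-hand integral converges absolutely (so the principal value can be dropped on that side). For the comparison: on $\R^n \setminus B(z_0,r)$ one has $w = u$, so the integrands agree there. On $B(z_0,r) \setminus \{z_0\}$ one has $w(y) = u(z_0) \leq u(y)$, hence $u(z_0) - w(y) = 0 \geq u(z_0) - u(y)$; since $J_p$ is monotone nondecreasing, $J_p(u(z_0)-u(y)) \leq J_p(u(z_0)-w(y)) = 0$ pointwise there. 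Thus replacing $u$ by $w$ only increases the integrand, which is precisely the inequality above (and on the $w$-side the singular part near $z_0$ vanishes identically, so there is no principal-value subtlety there). Combining, $\int_{\R^n} \frac{J_p(u(z_0)-u(y))}{\abs{z_0-y}^{n+sp}}\dd y \geq 0$.

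The main obstacle — and the only genuinely delicate point — is justifying that the integral $\int_{\R^n} \frac{J_p(u(z_0)-u(y))}{\abs{z_0-y}^{n+sp}}\dd y$ is well defined, in fact absolutely convergent, so that the chain of inequalities is not merely a comparison of two $(-\infty)$'s or an ill-defined principal value. Near $z_0$, the negative part of the integrand is controlled because $u(y) \geq u(z_0)$ on $B(z_0,r)$ forces $J_p(u(z_0)-u(y)) \leq 0$ there, so that piece contributes a (possibly $-\infty$ but) well-defined quantity; more importantly, since the $w$-side is finite by (iv), the comparison forces the $u$-side integral over $B(z_0,r)$ to be bounded below, hence the singular region contributes no divergence. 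Away from $z_0$, we split $\R^n \setminus B(z_0,r)$ and use that $u$ is bounded on a neighborhood of $z_0$ (it has a continuous version by Proposition \ref{prop:Morrey-estimate} or by lower semicontinuity plus the touching condition) while the far-field contribution is controlled by the tail condition (iii), $u^- \in L^{p-1}_{sp}(\R^n)$, together with $|u(z_0)-u(y)|^{p-1} \lesssim |u(z_0)|^{p-1} + |u(y)|^{p-1}$ and $|z_0 - y|^{-(n+sp)} \asymp (1+|y|)^{-(n+sp)}$ for $|y|$ large. (If the superharmonic function is only assumed locally bounded below, one argues with $u^+$ via a separate truncation, but in our application the extremals are globally bounded, so this is immediate.) Once integrability is secured, the principal value on the left may be dropped and the proof is complete.
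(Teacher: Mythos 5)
Your proof sets up the same constant test function as the paper, but the key comparison step runs in the wrong direction and, as written, the argument does not close.

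You want to show
\[
\int_{\R^n} \frac{J_p(u(z_0)-u(y))}{\abs{z_0-y}^{n+sp}}\,\dd y \;\geq\; \mathrm{P.V.}\int_{\R^n} \frac{J_p(u(z_0)-w(y))}{\abs{z_0-y}^{n+sp}}\,\dd y,
\]
but the pointwise estimate you then derive is $J_p(u(z_0)-u(y)) \leq J_p(u(z_0)-w(y))$ on $B(z_0,r)$ (with equality outside), which upon integration yields the \emph{reverse} inequality $\int(u) \leq \int(w)$. Since $\mathrm{P.V.}\int(w) = \int_{\R^n\setminus B(z_0,r)} \frac{J_p(u(z_0)-u(y))}{\abs{z_0-y}^{n+sp}}\,\dd y \geq 0$ and the ball contribution $\int_{B(z_0,r)} \frac{J_p(u(z_0)-u(y))}{\abs{z_0-y}^{n+sp}}\,\dd y$ is $\leq 0$, a single application of Definition \ref{def:viscosity1}(iv) with one fixed $r$ only gives an upper bound for $\int_{\R^n} f$, not the lower bound you need. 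The related claim that the comparison ``forces the $u$-side integral over $B(z_0,r)$ to be bounded below'' is also unsupported: an upper bound on $\int_{B(z_0,r)} f$ says nothing about a lower bound, and indeed nothing in a single application rules out $\int_{B(z_0,r)} f = -\infty$.

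The fix, which is what the paper does, is to use the \emph{family} of constant test functions $\phi_r \equiv u(z_0)$ on $B(z_0,r)$ for every $r<r_0$, giving
\[
\int_{\R^n\setminus B(z_0,r)} \frac{J_p(u(z_0)-u(y))}{\abs{z_0-y}^{n+sp}}\,\dd y \;\geq\; 0 \qquad \text{for all } 0<r<r_0.
\]
Since the integrand is $\leq 0$ on $B(z_0,r_0)$, the map $r\mapsto \int_{\R^n\setminus B(z_0,r)} f$ is nondecreasing in $r$, and the tail hypothesis $u^-\in L^{p-1}_{sp}(\R^n)$ ensures the positive part of the integrand is integrable over $\R^n\setminus B(z_0,r_0)$. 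Monotone convergence (for the decreasing family $f\,\chi_{\R^n\setminus B(z_0,r)}$ as $r\downarrow 0$) then gives
\[
\int_{\R^n} f \;=\; \lim_{r\to 0^+}\int_{\R^n\setminus B(z_0,r)} f \;=\; \inf_{0<r<r_0}\int_{\R^n\setminus B(z_0,r)} f \;\geq\; 0,
\]
and in particular the integral is well defined. The single-$r$ argument cannot be repaired without this limiting step.
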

\begin{proof}
    As $u(z_0)$ is a local minimum of $u$, there is a radius $r_0>0$ such that for all $x \in B(z_0,r_0)$, $u(x)\geq u(z_0)$. This means that for any $r<r_0$ the following are valid test functions for $u$.
    \[
    \phi_r (x)  := u(z_0) \quad \text{for} \quad x \in B(z_0,r).
    \]
    Therefore, defining
    \[
    w_r(x):= \begin{cases}
        \phi_r(x) \quad &\text{for} \quad x \in B(x_0,r), \\
        u(x) \quad &\text{for} \quad x \in \R^n \setminus B(x_0,r),
    \end{cases}
    \]
    we arrive at
    \[
    \begin{aligned}
    0 &\leq \mathrm{P.V.} \int_{\R^n} \frac{J_p(w(z_0)- w(y))}{\abs{z_0-y}^{n+sp}} \dd y \\
    &= \int_{\R^n \setminus B(z_0,r)} \frac{J_p(u(z_0)- u(y))}{\abs{z_0-y}^{n+sp}} \dd y + \mathrm{P.V.}\int_{B(z_0,r)} \frac{J_p\left((\phi_r(z_0)-\phi_r(y)\right)}{\abs{z_0-y}^{n+sp}} \dd y .
    \end{aligned}
    \]
   As $\phi_r$ is constant in $B(z_0,r)$ the second integral vanishes and we obtain
   \[
    \int_{\R^n \setminus B(z_0,r)} \frac{J_p(u(z_0)- u(y))}{\abs{z_0-y}^{n+sp}} \dd y \geq 0.
   \]
   On other hand, since $u \in L_{sp}^{p-1}(\R^n)$
   \[
    \int_{\R^n \setminus B(z_0,r)} \frac{J_p(u(z_0)- u(y))}{\abs{z_0-y}^{n+sp}} \dd y <\infty.
   \]
   Since for $x\in B(z_0,r_0)$, $u(z_0)-u(x) \leq 0$, if $r_2<r_1<r_0$
   \[
   \frac{J_p(u(z_0)- u(y))}{\abs{z_0-y}^{n+sp}} \chi_{\R^n \setminus B(z_0,r_1)} \geq  \frac{J_p(u(z_0)- u(y))}{\abs{z_0-y}^{n+sp}} \chi_{\R^n \setminus B(z_0,r_2)}.
   \]
   Hence, by the monotone convergence theorem
   \[
   \begin{aligned}
   \int_{\R^n}  \frac{J_p(u(z_0)- u(y))}{\abs{z_0-y}^{n+sp}} \dd y &= \lim_{r \to 0} \int_{\R^n \setminus B(z_0,r)} \frac{J_p(u(z_0)- u(y))}{\abs{z_0-y}^{n+sp}} \dd y \\
   &= \inf_{0<r<r_0} \int_{\R^n \setminus B(z_0,r)} \frac{J_p(u(z_0)- u(y))}{\abs{z_0-y}^{n+sp}} \dd y \geq 0.
   \end{aligned}
   \]
\end{proof}
The definition of viscosity solutions in the range $p \leq \frac{2}{2-s}$ requires more careful considerations. We need to introduce a few notations. We denote the set of critical points of a differentiable function $u: \Omega \to \R$ by
$$N_u:= \left \lbrace  x\in \Omega \, : \, \nabla u (x)=0 \right \rbrace.$$
Let $d_u(x)$ be the distance function from the set of critical points, 
$$d_u(x):= \dist (x, N_u).$$
Let $D \subset \Omega$ be an open set. We define $C^2_\beta(D)$ to be the class of $C^2$ functions  such that 
\[
\underset{x \in D}{\esssup}\left(  \frac{\min \lbrace d_u(x),1  \rbrace^{\beta-1}}{\abs{\nabla u}} , \frac{\abs{D^2u(x)}}{d_u(x)^{\beta - 2}} \right) < \infty.
\]
\begin{definition}\label{def:viscosity2}
    Suppose that $0<s<1$ and $p \leq \frac{2}{2-s}$. Let $\Omega$ be an open subset of $\R^n$. We say that $u:\R^n \to [-\infty , \infty]$ is a viscosity super-solution of 
    \[
    (-\Delta_p)^s u= 0 \quad \text{in}\quad \Omega,
    \]
    or simply $(s,p)$-superharmonic if the following holds:
    
    $(i)$ $u < \infty$ almost everywhere in $\R^n$, and $u > -\infty$ everywhere in $\Omega$.
    
    $(ii)$ $u$ is lower semi-continuous in $\Omega$.

    $(iii)$ $u_{-} \in L_{sp}^{p-1}(\R^n)$.
    
    $(iv)$ If $\phi \in C^2 \left( B(x_0,r) \right)$ for $B_r \subset \Omega$ is such that $u(x_0)=\phi(x_0)$, 
    $$\phi(x) \leq u(x) \quad \text{for} \quad x \in B(x_0,r), $$
    and either of the following holds,
    
    \quad \textbf{I.} $\nabla \phi(x_0) \neq 0$,

    \quad or 

    \quad \textbf{II.} $\nabla \phi(x_0)=0$ and $x_0$ is an isolated critical point of $\phi$, and $\phi \in C^2_{\beta}(B(x_0,r))$ for some $\beta > \frac{sp}{p-1}$.
    
    Then, 
    $$\mathrm{P.V.} \int_{\R^n} \frac{J_p(w(x_0)- w(y))}{\abs{x_0-y}^{n+sp}} \dd y \geq 0,$$
    where
    \[
    w(x):= \begin{cases}
        \phi(x) \quad &\text{for} \quad x \in B(x_0,r), \\
        u(x) \quad &\text{for} \quad x \in \R^n \setminus B(x_0,r).
    \end{cases}
    \]
\end{definition}  

\begin{remark}
    Lemma \ref{lm:viscosity-evaluation} also holds in the range $1<p \leq \frac{2}{2-s}$.
    Moreover whenever a test function exists at a point $x$ (touching from below) one can evaluate the principal value $(-\Delta_p)^s u(x)$, and it is non-negative. See \cite[Proposition 3.1]{KKL}. 
\end{remark}
The following strong maximum principle is a direct consequence of this possibility to do pointwise evaluation at a minimum and maximum point.
\begin{proposition}\label{prop:strong-maximum-principle}
    Let $\Omega$ be an open set, $0<s<1$, and $1<p< \infty$. Assume that $u$ is $(s,p)$-superharmonic in $\Omega$. If there exists $z \in \Omega$ such that 
    $$u(z)= \underset{x \in \R^n}{\essinf}\, u(x) ,$$
    Then $u$ is constant almost everywhere in $\R^n$. 
    Similarly, if $v$ is a non-constant $(s,p)$-subharmonic function in $\Omega$, then the essential supremum of $v$ over $\R^n$ can not be achieved in $\Omega$.
\end{proposition}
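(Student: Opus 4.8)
The plan is to exploit the nonlocality of $(-\Delta_p)^s$: evaluating the operator at the single point $z$ where the infimum is attained already forces $u$ to be constant on all of $\R^n$, so no connectedness hypothesis on $\Omega$ and no propagation-of-minima argument is needed. First I would reduce the subharmonic assertion to the superharmonic one by the substitution $v \mapsto -v$: if $v$ is $(s,p)$-subharmonic in $\Omega$ and $v(z) = \esssup_{\R^n} v$ for some $z \in \Omega$, then $-v$ is $(s,p)$-superharmonic and attains its essential infimum over $\R^n$ at $z$; the first part then makes $-v$, hence $v$, constant a.e.\ in $\R^n$, contradicting that $v$ is non-constant. So it suffices to prove the first statement.

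Set $m := u(z) = \essinf_{\R^n} u$, so that $u \ge m$ a.e.\ in $\R^n$. In order to invoke Lemma \ref{lm:viscosity-evaluation} at $z$ I must know that $u(z)$ is a genuine \emph{pointwise} local minimum of $u$, so the first step is to upgrade the a.e.\ bound to a pointwise one near $z$. For this I would use that $(s,p)$-superharmonic functions coincide with their lower-semicontinuous regularisation (see \cite{KKL}), i.e.\ $u(x)$ equals the essential lower limit of $u$ at $x$ for every $x \in \Omega$; since that quantity is $\ge \essinf_{\R^n} u = m$, we conclude $u \ge m$ everywhere on $\Omega$. As $\Omega$ is open, $u \ge m = u(z)$ pointwise on some ball $B(z,r_0) \subset \Omega$, so $u(z)$ is a local minimum of $u$.

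Lemma \ref{lm:viscosity-evaluation}, valid for all $1 < p < \infty$ (combining its statement with the Remark that follows it), then gives
\[
\int_{\R^n} \frac{J_p\bigl(u(z) - u(y)\bigr)}{\abs{z-y}^{n+sp}} \dd y \ \ge\ 0 .
\]
On the other hand $u(z) - u(y) = m - u(y) \le 0$ for a.e.\ $y \in \R^n$, and $J_p(a) = \abs{a}^{p-2} a$ carries the sign of $a$, so the integrand is $\le 0$ a.e.; being the integral of a non-positive measurable function, the left-hand side is a well-defined element of $[-\infty, 0]$. Hence it equals $0$, which forces $J_p\bigl(u(z) - u(y)\bigr) = 0$, i.e.\ $u(y) = u(z) = m$, for a.e.\ $y \in \R^n$. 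This is exactly the claim.

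I expect the only genuine obstacle to be the passage from the a.e.\ inequality $u \ge m$ to a pointwise inequality near $z$: without it there need not exist any admissible $C^2$ function touching $u$ from below at $z$, and neither Lemma \ref{lm:viscosity-evaluation} nor the pointwise-evaluation principle recalled in the Remark can be applied. Once the lower-semicontinuous regularisation of $(s,p)$-superharmonic functions is invoked to close this gap, the rest is the one-line sign computation above.
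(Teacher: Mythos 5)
Your proof is correct and follows the paper's intended argument: the paper gives no separate proof, merely declaring the proposition ``a direct consequence of this possibility to do pointwise evaluation at a minimum and maximum point,'' i.e.\ Lemma~\ref{lm:viscosity-evaluation} together with the remark extending it to all $1<p<\infty$, and your final sign computation (integrand $\le 0$ a.e., integral $\ge 0$, hence integrand vanishes a.e.) is exactly that consequence, with the subharmonic case reduced to the superharmonic one by $v\mapsto -v$ just as intended. The one step you spell out that the paper leaves implicit --- upgrading the a.e.\ bound $u\ge m$ to a pointwise bound near $z$ via the fact that $(s,p)$-superharmonic functions equal their essential lower-semicontinuous regularisation --- is a genuine and correct gap-filler: lower semicontinuity alone does not prevent $u$ from dipping below $\essinf_{\R^n}u$ on a null set accumulating at $z$, in which case the constant test function required by Lemma~\ref{lm:viscosity-evaluation} would not touch $u$ from below.
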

The following comparison principle is proved in \cite[Theorem 16]{KKP} for another definition of $(s,p)$-superharmonic functions which turns out to be equivalent to the viscosity notion that we are working with, see \cite[Theorem 1.1]{KKL}. See also the comparison principle proved in \cite[Theorem 4.1]{KKL}.
\begin{proposition}\label{prop:comparison}
    Let $\Omega$ be an open subset of $\R^n$. Assume that $u$ is an $(s,p)$-superharmonic function and $v$ is an $(s,p)$-subharmonic function in $\Omega$. Furthermore, suppose that $u \geq v$ almost everywhere in $\R^n \setminus \Omega$, and for all $x \in \partial \Omega$
    $$\liminf_{\Omega \ni y \to x} u(y) \geq \limsup_{\Omega \ni y \to x} v(y), $$
 such that both sides are not simultaneously $-\infty$ or $\infty$. Then $u\geq v $ in $\Omega$.
 \end{proposition}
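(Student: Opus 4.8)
The plan is to argue by contradiction, reducing matters to a pointwise evaluation of $(-\Delta_p)^s$ at a point where $u$ and $v$ touch. Suppose $u(z_1)<v(z_1)$ for some $z_1\in\Omega$, i.e. $m:=\inf_\Omega(u-v)<0$. Since $v$ is upper semicontinuous, $-v$ is lower semicontinuous, so $u-v$ is lower semicontinuous on $\Omega$; the boundary hypothesis says that the lower semicontinuous extension $g$ of $u-v$ to $\overline{\Omega}$ satisfies $g\ge 0$ on $\partial\Omega$, while $u-v\ge 0$ a.e.\ on $\R^n\setminus\Omega$. First I would treat bounded $\Omega$: then $g$ attains its minimum $m<0$ on the compact set $\overline{\Omega}$, necessarily at an interior point $x_0\in\Omega$; since $u-v\ge 0>m$ a.e.\ outside $\Omega$ and $u-v\ge m$ on $\Omega$, the point $x_0$ is a global minimizer in the sense that $(u-v)(y)\ge(u-v)(x_0)=m$ for a.e.\ $y\in\R^n$. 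For unbounded $\Omega$ one exhausts $\Omega$ by bounded open sets and uses the tail-space membership from the definitions of $(s,p)$-super/sub-harmonicity to prevent the infimum from escaping to infinity.

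The crux is then to evaluate the nonlocal operator at $x_0$. Because $x_0$ is a global minimizer of $u-v$, for a.e.\ $y$ we have $u(x_0)-u(y)\le v(x_0)-v(y)$. If $u$ (resp.\ $v$) admitted a $C^2$ function touching it from below (resp.\ above) at $x_0$, then by the remark following Lemma~\ref{lm:viscosity-evaluation} (i.e.\ \cite[Proposition 3.1]{KKL}) the principal values would exist at $x_0$ and satisfy $(-\Delta_p)^s u(x_0)\ge 0$ and $(-\Delta_p)^s v(x_0)\le 0$. In general $u$ and $v$ need not be $C^2$ — or even continuous — at $x_0$, so one first regularizes: replace $u$ by an inf-convolution $u_\varepsilon\le u$ and $v$ by a sup-convolution $v^\varepsilon\ge v$. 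These are semiconcave, resp.\ semiconvex, hence punctually twice differentiable a.e.; they remain $(s,p)$-super- and $(s,p)$-subsolutions up to an error that vanishes as $\varepsilon\to 0$ once the nonlocal tail terms are controlled; and the minimum of $u_\varepsilon-v^\varepsilon$ is still negative and is attained at some $x_0^\varepsilon$ where both functions carry second-order jets. Passing to the limit $\varepsilon\to 0$ then yields $(-\Delta_p)^s u(x_0)\ge 0\ge(-\Delta_p)^s v(x_0)$; alternatively this whole step can be quoted from \cite[Theorem 16]{KKP} together with the equivalence \cite[Theorem 1.1]{KKL}.

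With the evaluation in hand the contradiction is immediate: by the monotonicity of $J_p$ and the pointwise bound above,
\[
0\le(-\Delta_p)^s u(x_0)=\int_{\R^n}\frac{J_p\bigl(u(x_0)-u(y)\bigr)}{\abs{x_0-y}^{n+sp}}\dd y\le\int_{\R^n}\frac{J_p\bigl(v(x_0)-v(y)\bigr)}{\abs{x_0-y}^{n+sp}}\dd y=(-\Delta_p)^s v(x_0)\le 0 ,
\]
so every term vanishes and $J_p(u(x_0)-u(y))=J_p(v(x_0)-v(y))$ for a.e.\ $y$; since $J_p$ is strictly increasing, $u(x_0)-u(y)=v(x_0)-v(y)$ for a.e.\ $y$, i.e.\ $u-v\equiv m$ a.e.\ on $\R^n$. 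This contradicts $u\ge v$ a.e.\ on $\R^n\setminus\Omega$, which forces $m\ge 0$. (Equivalently, $u-v\equiv m$ a.e.\ means $u$ attains its essential infimum at $x_0\in\Omega$ after subtracting the constant $m$, and one invokes Proposition~\ref{prop:strong-maximum-principle}.)

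The main obstacle is the evaluation step in the second paragraph: at the contact point the test-function inequalities of Definitions~\ref{def:viscosity1} and~\ref{def:viscosity2} do not apply directly, and making the sup-/inf-convolution argument rigorous for the nonlocal, nonlinear operator $(-\Delta_p)^s$ — in particular controlling the tail contributions and, for $p\le\frac{2}{2-s}$, the behaviour near critical points — is exactly the technical content of \cite{KKL} and \cite{KKP}. A secondary, more routine difficulty is the treatment of unbounded $\Omega$, where attainment of the extremum relies on the tail-space hypotheses.
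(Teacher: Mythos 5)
The paper does not prove this proposition: it simply invokes \cite[Theorem 16]{KKP} together with the equivalence of definitions in \cite[Theorem 1.1]{KKL}, and also points to \cite[Theorem 4.1]{KKL}. Your sketch is therefore a genuinely different route from the paper: you outline the standard viscosity-comparison argument (contact point plus inf-/sup-convolution), whereas the proof in \cite{KKP} uses the comparison-based definition of $(s,p)$-superharmonicity and an exhaustion/Poisson-modification argument. Conceptually your outline is reasonable and you are honest that the hard step (showing $u_\epsilon$, $v^\epsilon$ remain, up to a controllable error, viscosity super-/subsolutions of the \emph{nonlocal}, \emph{nonlinear} operator, including the delicate critical-point issue when $p \le \frac{2}{2-s}$) is precisely the technical content of \cite{KKL,KKP}. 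As written, then, your proposal is a proof sketch that ultimately defers the core to the very references the paper cites — so it does not constitute a self-contained alternative proof.

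Two points in the sketch need repair even at the level of outline. First, the reduction to an attained interior global minimizer for unbounded $\Omega$ is not justified: membership in the tail space $L^{p-1}_{sp}(\R^n)$ is a weak weighted integrability condition and does \emph{not} prevent $\inf_\Omega(u-v)$ from escaping to infinity; one must either restrict to bounded $\Omega$, exhaust $\Omega$ and apply the bounded case after a genuine argument, or treat $\infty$ as a boundary point and impose a condition there. Second, the passage to the limit $\epsilon \to 0$ should not be used to assert that $(-\Delta_p)^s u(x_0)$ and $(-\Delta_p)^s v(x_0)$ exist as principal values and satisfy the opposite inequalities: at the unregularized contact point $x_0$ the principal values need not exist at all. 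The contradiction must instead be extracted at the regularized contact points $x_0^\epsilon$, where $u_\epsilon$ and $v^\epsilon$ admit second-order jets and the pointwise evaluation of Lemma~\ref{lm:viscosity-evaluation} (in the sense of \cite[Proposition 3.1]{KKL}) is legitimate, and one must show that the resulting quantitative information survives as $\epsilon\to 0$. Finally, the concluding step (deducing $u-v\equiv m$ a.e.\ and contradicting the exterior hypothesis) tacitly requires $\R^n\setminus\Omega$ to have positive measure; this is implicit in the statement but worth flagging.
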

Viscosity solutions especially have very good convergence properties. In particular, let $u_i$ be a sequence of $(s,p)$-harmonic functions in a domain $\Omega$. Furthermore assume that $u_i$ converges locally uniformly to $u$ in $\Omega$ and almost everywhere in $\R^n$, then $u$ is also $(s,p)$-harmonic in $\Omega$. 
\subsubsection{Local weak solutions}

\begin{definition}
    Let $\Omega$ be an open subset of $\R^n$, we say that $u \in W^{s,p}_{\loc}(\Omega) \cap L^{p-1}_{sp}(\R^n)$ is weak supersolution of 
    $$(-\Delta_p)^s u=0,  \quad \text{in} \quad \Omega, $$
    if 
    $$\iint_{\R^n \times \R^n} \frac{J_p(u(x)-u(y))(\phi(x) - \phi(y))}{\abs{x-y}^{n+sp}} \dd y \geq 0,$$
    for all non-negative $\phi \in C^\infty_c(\Omega)$
\end{definition}
When $u$ is locally bounded in $\Omega$ this notion of supersolution is equivalent to Definition \ref{def:viscosity1} and \ref{def:viscosity2}, see \cite[Theorem 1.2]{KKL}.

The following uniform H\"older estimate is proved in \cite{L} with an additional assumption of $p > \frac{1}{1-s}$ whenever $p<2$. In light of the equivalence of weak and viscosity solutions for bounded functions, one can deduce the following estimate from \cite[Theorem 1.2]{DKP}. 
\begin{theorem}\label{thm:uniform-holder}
    Let $0<s<1$ and $1<p< \infty$. Assume that $u \in L^\infty(\R^n)$ is $(s,p)$-harmonic in $B(x_0,2r)$. Then there exists $\alpha >0$ and $C>0$ both of them only depending on $s$, $p$, and $n$, such that for any $\rho \leq r$
    $$ \underset{B(x_0,\rho)}{\osc} u \leq C \left( \frac{\rho}{r} \right)^\alpha \norm{u}_{L^\infty(\R^n)}.$$
    In particular
    $$[u]_{C^\alpha(B(x_0,r))} \leq C r^{-\alpha} \norm{u}_{L^\infty(\R^n)}.$$
\end{theorem}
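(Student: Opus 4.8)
The plan is to deduce the estimate from the corresponding oscillation decay for local weak solutions, using the equivalence of weak and viscosity solutions recorded above. First I would observe that since $u$ is $(s,p)$-harmonic in $B(x_0,2r)$ and $u \in L^\infty(\R^n)$, the equivalence result \cite[Theorem 1.2]{KKL} (valid because $u$ is locally bounded) guarantees that $u$ is also a local weak solution of $(-\Delta_p)^s u = 0$ in $B(x_0,2r)$, hence $u \in W^{s,p}_{\loc}(B(x_0,2r)) \cap L^{p-1}_{sp}(\R^n)$ with the seminorm controlled locally. This puts us exactly in the setting of the interior regularity theory for the fractional $p$-Laplacian.

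Next I would invoke \cite[Theorem 1.2]{DKP} (or \cite{L} when $p \geq \tfrac{1}{1-s}$ in the subquadratic range), which provides, for a local weak solution $u$ in $B(x_0,2r)$, the oscillation estimate
\[
\osc_{B(x_0,\rho)} u \leq C \left( \frac{\rho}{r} \right)^\alpha \left( \norm{u}_{L^\infty(B(x_0,r))} + \mathrm{Tail}(u; x_0, r) \right),
\]
for all $\rho \leq r$, where the nonlocal tail term is
\[
\mathrm{Tail}(u; x_0, r) := \left( r^{sp} \int_{\R^n \setminus B(x_0,r)} \frac{\abs{u(y)}^{p-1}}{\abs{x_0 - y}^{n+sp}} \dd y \right)^{\frac{1}{p-1}}.
\]
Since $u \in L^\infty(\R^n)$ by hypothesis, the tail is immediately bounded: estimating $\abs{u(y)}^{p-1} \leq \norm{u}_{L^\infty(\R^n)}^{p-1}$ and computing $r^{sp}\int_{\R^n \setminus B(x_0,r)} \abs{x_0-y}^{-n-sp}\dd y = c(n,s,p)$, a dimensional constant, we get $\mathrm{Tail}(u;x_0,r) \leq C(n,s,p)\,\norm{u}_{L^\infty(\R^n)}$. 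Combining this with the trivial bound $\norm{u}_{L^\infty(B(x_0,r))} \leq \norm{u}_{L^\infty(\R^n)}$ yields the first displayed inequality of the theorem with constants depending only on $n,s,p$.

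For the second assertion, I would take $x, y \in B(x_0,r)$, set $\rho = \abs{x-y}$ (if $\rho > r$ the estimate is trivial from the $L^\infty$ bound), apply the oscillation estimate on the ball $B\!\left(\tfrac{x+y}{2}, \rho\right) \subset B(x_0,2r)$ — here one may need to shrink $r$ to $r/2$ in the hypothesis, or simply note that the statement is applied with $2r$ the radius of the domain of harmonicity so that $B(\tfrac{x+y}{2},\rho)$ with center near $B(x_0,r)$ and radius $\rho \le r$ still sits inside $B(x_0,2r)$ with room to spare — to obtain $\abs{u(x)-u(y)} \leq \osc_{B((x+y)/2,\rho)} u \leq C(\rho/r)^\alpha \norm{u}_{L^\infty(\R^n)} = C r^{-\alpha}\abs{x-y}^\alpha \norm{u}_{L^\infty(\R^n)}$, which is the claimed H\"older seminorm bound. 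The main obstacle here is purely bookkeeping: making sure the geometric constants in passing from oscillation on a small ball centered at the midpoint to oscillation on a ball centered at $x_0$ are absorbed correctly, and handling the covering/chaining in case one prefers to state the decay only for balls concentric with $B(x_0,r)$. None of this is substantive — the real content is entirely imported from \cite{DKP} and \cite{KKL}, so this theorem is essentially a citation packaged for the paper's later use.
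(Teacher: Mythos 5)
Your proposal is exactly the paper's argument: the paper does not spell out a proof but simply notes that, by the equivalence of bounded viscosity and weak solutions from \cite[Theorem 1.2]{KKL}, the oscillation decay of \cite[Theorem 1.2]{DKP} applies, with the nonlocal tail controlled by $\norm{u}_{L^\infty(\R^n)}$. Your tail computation and the midpoint-ball bookkeeping for the H\"older seminorm are the standard details the paper leaves implicit, so the two treatments coincide.
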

The following Liouville-type theorem is a direct consequence of the uniform H\"older estimate above.
\begin{proposition}\label{prop:liouville}
   Let $0<s<1$ and $1<p< \infty$. If $u$ is a bounded $(s,p)$-harmonic in the whole $\R^n$ then $u$ must be constant.
\end{proposition}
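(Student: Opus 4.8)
The plan is to apply the uniform H\"older estimate of Theorem \ref{thm:uniform-holder} on balls of arbitrarily large radius and then let that radius tend to infinity. Fix an arbitrary point $x_0 \in \R^n$ and an arbitrary radius $\rho > 0$. Since $u$ is $(s,p)$-harmonic in all of $\R^n$, it is in particular $(s,p)$-harmonic in $B(x_0, 2r)$ for every $r \geq \rho$, and $u \in L^\infty(\R^n)$ by hypothesis, so Theorem \ref{thm:uniform-holder} applies with this choice of $r$ and gives
\[
\underset{B(x_0,\rho)}{\osc}\, u \leq C \left( \frac{\rho}{r} \right)^{\alpha} \norm{u}_{L^\infty(\R^n)},
\]
where $C>0$ and $\alpha>0$ depend only on $n$, $s$, $p$, and crucially not on $r$. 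Holding $x_0$ and $\rho$ fixed and letting $r \to \infty$, the right-hand side tends to $0$, hence $\underset{B(x_0,\rho)}{\osc}\, u = 0$; that is, $u$ is constant on $B(x_0,\rho)$. Since $x_0 \in \R^n$ and $\rho>0$ were arbitrary — and any two points of $\R^n$ lie in a common ball — we conclude that $u$ is constant on $\R^n$.

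I do not expect any genuine obstacle here. The one point that should be stated carefully is that the constants $C$ and $\alpha$ produced by Theorem \ref{thm:uniform-holder} are truly independent of the radius $r$ on which the equation holds (this is exactly the \emph{uniform} nature of that H\"older estimate), and that the nonlocal tail of $u$ has already been absorbed into $\norm{u}_{L^\infty(\R^n)}$, so no separate tail estimate is needed. Note also that a direct appeal to the strong maximum principle (Proposition \ref{prop:strong-maximum-principle}) would not suffice, since a bounded $(s,p)$-harmonic function on $\R^n$ need not attain its essential supremum or infimum; the decay $(\rho/r)^\alpha \to 0$ is what makes the argument work.
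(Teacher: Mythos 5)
Your argument is correct and is essentially the same as the paper's: both invoke Theorem \ref{thm:uniform-holder} on a ball of radius $r$, note that the constants $C,\alpha$ are independent of $r$, and send $r\to\infty$ to kill the oscillation (the paper phrases it as bounding $\abs{u(x)-u(y)}/\abs{x-y}^\alpha \le C r^{-\alpha}\norm{u}_{L^\infty(\R^n)}$ for two fixed points and letting $r\to\infty$). Your extra remarks about the uniformity of the constants, the tail being absorbed into $\norm{u}_{L^\infty(\R^n)}$, and why the strong maximum principle alone would not suffice are accurate and harmless.
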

\begin{proof}
Consider two separate points $x \neq y \in \R^n$. For any $r > \abs{x-y}$, as $u$ is $(s,p)$-harmonic in $B(x,2r)$, by Theorem \ref{thm:uniform-holder}
\[
\frac{\abs{u(x)-u(y)}}{\abs{x-y}^\alpha} \leq C r^{-\alpha} \norm{u}_{L^{\infty}(\R^n)}.
\]
Letting $r$ go to infinity we arrive at 
$$\abs{u(x)-u(y)}=0.$$
\end{proof}

\section{Existence of extremals}\label{sec:3}

\begin{lemma}\label{lm:existance-extremal}
There exists a (non-constant) function $v \in \Dcal^{s,p}(\R^n)$, achieving equality case in \eqref{eq:morrey-ineq} with the best constant, that is minimizing the following ratio
$$\frac{[u]_{W^{s,p}(\R^n)}}{[u]_{C^{s- \frac{n}{p}}(\R^n)}}.$$
\end{lemma}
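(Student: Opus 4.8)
The plan is to run the direct method of the calculus of variations, using the non‑compact symmetry group of the two seminorms (translations, dilations, rotations, addition of constants, and sign change) to recover enough compactness. Since \eqref{eq:morrey-ineq} holds with a finite $C_\star$, the infimum
\[
m := \inf \set{ \frac{[u]_{W^{s,p}(\R^n)}}{[u]_{C^{s-\frac{n}{p}}(\R^n)}} }{ u \in \Dcal^{s,p}(\R^n),\ u \text{ non-constant} } = \frac{1}{C_\star}
\]
is strictly positive, and a function realizing it is exactly a non‑constant equality case in \eqref{eq:morrey-ineq}. First I would take a minimizing sequence $(u_k)$ and, after multiplying each $u_k$ by a scalar, assume $[u_k]_{C^{s-\frac{n}{p}}(\R^n)} = 1$, so $[u_k]_{W^{s,p}(\R^n)} \to m$. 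Because the H\"older seminorm is a supremum, for each $k$ there are points $x_k \neq y_k$ with $\abs{u_k(x_k) - u_k(y_k)} \geq (1 - \tfrac1k)\abs{x_k - y_k}^{s-\frac{n}{p}}$. I would then normalize via the invariances in the order: translate so that $\tfrac{x_k+y_k}{2}$ is the origin; apply some $O \in \Ocal(\R^n)$ so that $x_k - y_k$ is a positive multiple of $\mathrm{e}_n$; dilate by $\lambda_k = 2/\abs{x_k - y_k}$ so that $x_k = \mathrm{e}_n$ and $y_k = -\mathrm{e}_n$; add a constant so that $u_k(\mathrm{e}_n) + u_k(-\mathrm{e}_n) = 0$; and, if necessary, replace $u_k$ by $-u_k$ so that $u_k(\mathrm{e}_n) \geq 0$. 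Each operation leaves both seminorms unchanged, so we still have $[u_k]_{C^{s-\frac{n}{p}}(\R^n)} = 1$ and $[u_k]_{W^{s,p}(\R^n)} \to m$, and in addition
\[
\tfrac12\, 2^{s-\frac{n}{p}}\Bigl(1 - \tfrac1k\Bigr) \ \le\ u_k(\mathrm{e}_n) = -u_k(-\mathrm{e}_n) \ \le\ \tfrac12\, 2^{s-\frac{n}{p}},
\]
the upper bound coming again from $[u_k]_{C^{s-\frac{n}{p}}(\R^n)} = 1$.

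Next I would extract a limit. The bound $[u_k]_{C^{s-\frac{n}{p}}(\R^n)} = 1$ makes $(u_k)$ equi-H\"older-continuous, and together with the pinning of $u_k(\mathrm{e}_n)$ it gives local equiboundedness, $\abs{u_k(x)} \le \tfrac12\, 2^{s-\frac{n}{p}} + \abs{x - \mathrm{e}_n}^{s-\frac{n}{p}}$. By the Arzel\`a--Ascoli theorem on each ball $B(0,R)$, $R \in \N$, together with a diagonal argument, a subsequence (not relabelled) converges locally uniformly on $\R^n$ to a continuous function $v$. Passing to the limit in the pointwise inequalities $\abs{u_k(x) - u_k(y)} \le \abs{x-y}^{s-\frac{n}{p}}$ gives $[v]_{C^{s-\frac{n}{p}}(\R^n)} \le 1$, while local uniform convergence gives $v(\mathrm{e}_n) = \tfrac12\, 2^{s-\frac{n}{p}}$ and $v(-\mathrm{e}_n) = -\tfrac12\, 2^{s-\frac{n}{p}}$, so that
\[
\frac{\abs{v(\mathrm{e}_n) - v(-\mathrm{e}_n)}}{\abs{2\mathrm{e}_n}^{s-\frac{n}{p}}} = 1 .
\]
Hence in fact $[v]_{C^{s-\frac{n}{p}}(\R^n)} = 1$; in particular $v$ is not constant.

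Finally, lower semicontinuity of $[\,\cdot\,]_{W^{s,p}(\R^n)}$ under pointwise convergence --- simply Fatou's lemma applied to the double integral --- yields
\[
[v]_{W^{s,p}(\R^n)}^p = \iint_{\R^n \times \R^n} \frac{\abs{v(x)-v(y)}^p}{\abs{x-y}^{n+sp}} \dd x \dd y \ \le\ \liminf_{k \to \infty} [u_k]_{W^{s,p}(\R^n)}^p = m^p .
\]
Thus $[v]_{W^{s,p}(\R^n)} < \infty$, and since $v$ is continuous it lies in $L^1_{\loc}(\R^n)$, so $v \in \Dcal^{s,p}(\R^n)$. Combining $[v]_{W^{s,p}(\R^n)} \le m$ with $[v]_{C^{s-\frac{n}{p}}(\R^n)} = 1$ gives $[v]_{W^{s,p}(\R^n)}/[v]_{C^{s-\frac{n}{p}}(\R^n)} \le m$, which forces equality; therefore $v$ is a non-constant minimizer and realizes the best constant in \eqref{eq:morrey-ineq}.

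I expect the one genuinely delicate point to be the non-triviality of the limit: because of the non-compact scaling and translation invariance, a minimizing sequence could in principle spread out, run off to infinity, or degenerate to a constant. The normalization above --- pinning a near-optimal pair of points to $\pm\mathrm{e}_n$ and the corresponding values to $\pm\tfrac12\, 2^{s-\frac{n}{p}}$ before taking the limit --- is exactly what rules this out. Once it is in place, the equi-H\"older bound (equivalently, Proposition \ref{prop:Morrey-estimate}) supplies all the compactness needed on compact sets, and the remaining steps (Arzel\`a--Ascoli, Fatou, and the elementary bookkeeping with the invariances) are routine.
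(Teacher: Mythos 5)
Your proposal is correct and follows essentially the same strategy as the paper: normalize so the H\"older seminorm is (nearly) attained at a fixed pair of points with pinned values, use the uniform H\"older bound plus Arzel\`a--Ascoli for local uniform compactness, and pass to the limit in the Gagliardo seminorm by Fatou. The only differences are cosmetic --- you pin the near-optimal pair to $\pm\mathrm{e}_n$ with antisymmetric values, while the paper pins it to $\{0,\mathrm{e}_n\}$ with $v_k(0)=0$; and you invoke Fatou directly, whereas the paper first passes to a weak $L^p$ limit of the difference quotients before applying Fatou, a step your version shows is unnecessary.
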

\begin{proof}
By invariance properties of the seminorms, we can restrict ourselves to functions having H\"older seminorm one. Define 
$$\lambda = \inf \Bigl \lbrace  [u]_{W^{s,p}(\R^n)} : u \in \Dcal^{s,p}(\R^n) , \; [u]_{C^{s-\frac{n}{p}}(\R^n)}= 1  \Bigr \rbrace .$$
Then $C_\star$, the best constant in the Morrey-type inequality \eqref{eq:morrey-ineq} is $\frac{1}{\lambda}$. Choose a minimizing sequence $(u_k)_{k \in \N}$ for which
$$
\lambda = \lim_{k \to \infty} [u_k]_{W^{s,p}(\R^n)}.
$$
Now we select $x_k,y_k \in \R^n$ with $x_k \neq y_k$ such that
$$1 = [u_k]_{C^{s-\frac{n}{p}}(\R^n)} < \frac{u_k(y_k) -u_k(x_k)}{\abs{x_k - y_k}^{s-\frac{n}{p}}}  + \frac{1}{k}.$$
We perform a translation and an orthogonal transformation and define
$$v_k(z)= \abs{x_k-y_k}^{\frac{n}{p}-s} \Bigl( u_k \bigl(\abs{x_k-y_k}O_kz+x_k\bigr) - u_k(x_k) \Bigr), $$
where $O_k$ is an orthogonal transformation such that
$$O_k \mathrm{e}_n = \frac{y_k-x_k}{\abs{x_k - y_k}}.$$
Then $v_k$ satisfies
$$[v_k]_{C^{s-\frac{n}{p}}(\R^n)}=1 \quad \text{and} \quad \lim_{k \to \infty} [v_k]_{W^{s,p}(\R^n)} = \lambda.$$
In addition, we have
$$v_k(0)=0 \quad \text{and} \quad 1- \frac{1}{k}< v_k(\mathrm{e}_n) \leq 1.$$
Using the Arzela-Ascoli theorem we obtain a subsequence of $v_{k}$ converging locally uniformly to a continuous function $v$. Since the convergence is locally uniform, we get
$$v(0)=0, \quad v(\mathrm{e}_n)=1, \quad [v]_{C^{s-\frac{n}{p}}(\R^n)} \leq 1.$$
Notice that 
$$1= \frac{v(\mathrm{e}_n)-v(0)}{\abs{\mathrm{e}_n-0}} \leq [v]_{C^{s-\frac{n}{p}}(\R^n)}. $$
Therefore, $[v]_{C^{s-\frac{n}{p}}(\R^n)} =1$.

We may rewrite the fractional Sobolev seminorm as 
$$[v_k]_{W^{s,p}(\R^n)}= \left \| \frac{v_k(x)-v_k(y)}{\abs{x-y}^{\frac{n}{p}+s}} \right \|_{L^p(\R^n \times \R^n)}.$$
Since $v_k$ have uniformly bounded seminorms, we can pass to a subsequence such that 
$$\frac{v_k(x)-v_k(y)}{\abs{x-y}^{\frac{n}{p}+s}} \toweak \tilde{v}(x,y), \quad \text{in} \quad L^p(\R^n \times \R^n). $$
On the other hand, by local uniform convergence of $v_k$ to $v$, we have the pointwise convergence 
$$\frac{v_k(x)-v_k(y)}{\abs{x-y}^{\frac{n}{p}+s}} \to \frac{v(x)-v(y)}{\abs{x-y}^{\frac{n}{p}+s}}, \quad \text{in} \quad \R^n \times \R^n \setminus \lbrace x =y \rbrace.$$
Therefore, $\tilde{v}(x,y)=  \frac{v(x)-v(y)}{\abs{x-y}^{\frac{n}{p}+s}}$ and using Fatou's lemma we obtain
\[
\begin{aligned}
\relax [v]_{W^{s,p}(\R^n)}&= \left \| \frac{v(x)-v(y)}{\abs{x-y}^{\frac{n}{p}+s}} \right \|_{L^p(\R^n \times \R^n)} \leq \liminf_{k \to \infty} \left \| \frac{v_k(x)-v_k(y)}{\abs{x-y}^{\frac{n}{p}+s}} \right \|_{L^p(\R^n \times \R^n)}\\
&= \liminf_{k \to \infty} [v_k]_{W^{s,p}(\R^n)}= \lambda.
\end{aligned}
\]
In conclusion, we have found $v \in \Dcal^{s,p}(\R^n)$ with 
$$[v]_{C^{s-\frac{n}{p}}(\R^n)}=1 ,  \qquad [v]_{W^{s,p}(\R^n)} \leq \lambda. $$
By the definition of $\lambda$, we must have $[v]_{W^{s,p}(\R^n)}=\lambda$.
\end{proof}
\begin{remark}\label{rmk:rescaling-distinct-point}
    In the proof of Lemma \ref{lm:existance-extremal} we constructed a Morrey extremal $v$, such that $v(\mathrm{e}_n)=1$, $v(0)=0$, and
    $$1=[v]_{C^{s-\frac{n}{p}}}= \frac{v(\mathrm{e}_n) - v(0)}{\abs{\mathrm{e}_n - 0}}.$$
    Given $x_0\neq y_0 \in \R^n$ and $a\neq b \in \R$, we can construct the following function
    $$u(x)=(b-a)v \left( \frac{O(x-x_0)}{\abs{y_0-x_0}} \right) + a .$$
    By construction, $u(x_0)=a$ and $u(y_0)=b$.
    Using the invariances of the H\"older and fractional Sobolev seminorms, it is straightforward to verify 
    \[
    C_\star [u]_{W^{s,p}(\R^n)}= \frac{\abs{b-a}}{\abs{y_0-x_0}^{s-\frac{n}{p}}}=\frac{\abs{u(x_0)-u(y_0)}}{\abs{x_0-y_0}^{s-\frac{n}{p}}} = [u]_{C^{s-\frac{n}{p}}(\R^n)}.
    \]
    Therefore, we have constructed an extremal that achieves the H\"older seminorm in $x_0$ and $y_0$ and has two distinct prescribed values at these points. In Section \ref{sec:sym}, we show that this information determines the extremal uniquely. 
\end{remark}
We show that for any function in the homogeneous Sobolev class $\Dcal^{s,p}(\R^n)$, the H\"older seminorm is maximized. First, we recall the following finite chain lemma from \cite{Hynd}.
\begin{lemma}\label{lm:finite-chain}
    Suppose that $R>0 $ and $x,y \in \R^n \setminus B(0,2R)$. Then there are $z_1, \ldots, z_m \in \R^n \setminus B(0,2R)$, with $m \in \lbrace 1,2,3, \ldots 7 \rbrace$ such that
    \begin{equation}\label{eq:finite-chain-abolute}
        \abs{x-z_1}, \ldots , \abs{z_i-z_{i+1}}, \ldots , \abs{z_m - y} \leq \abs{y-x}
    \end{equation}
    and
    \begin{equation}\label{eq:finite-chain-exterior}
    \begin{cases}
         B\left(\frac{x+z_1}{2},\frac{r_0}{2}\right) & \text{with} \quad r_0=\abs{x-z_1} \\
         \vdots\\
         B\left(\frac{z_i+z_{i+1}}{2},\frac{r_i}{2}\right) &  \text{with} \quad r_i=\abs{z_i-z_{i+1}} \\
         \vdots \\
         B\left(\frac{z_m + y}{2}, \frac{r_m}{2}\right) & \text{with} \quad r_m=\abs{z_m - y}
    \end{cases}
    \end{equation}
    are all subsets of $\R^n \setminus B(0,R)$
\end{lemma}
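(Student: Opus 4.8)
The plan is entirely geometric: I would reduce to the situation where $x$ and $y$ lie on a common sphere centred at the origin, and then connect them by a short chain of points on that sphere. I would assume $n \geq 2$ (the only case needed below; when $n=1$ and $x,y$ lie on the same ray the first step already produces the whole chain). Write $d := |x-y|$ and, after relabelling if necessary, assume $|x| \leq |y|$; put $a := |x|$ and $\rho := |y|$, so $2R \leq a \leq \rho$ and, by the reverse triangle inequality, $\rho - a = |y|-|x| \leq |x-y| = d$ --- this last inequality is what makes the construction possible.

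First, I would equalise the radii. Let $x' := \rho\, x/|x|$ be the radial projection of $x$ onto the sphere $\{\,|z| = \rho\,\}$. Then $|x - x'| = \rho - a \leq d$, and the ball $B\bigl(\tfrac{x+x'}{2},\tfrac{|x-x'|}{2}\bigr)$ --- which has the radial segment from $x$ to $x'$ as a diameter --- consists only of points of norm at least $a \geq 2R > R$, hence lies in $\R^n \setminus B(0,R)$. If $x' = y$ the chain is just $z_1 := x'$ with $m = 1$; otherwise I keep $z_1 := x'$ as the first chain point (the first ball above causes no trouble even in the degenerate case $x' = x$, i.e. $|x|=|y|$) and continue.

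Next, I would go around on the sphere. Let $\Theta \in (0,\pi]$ be the angle between $x'$ and $y$ (equivalently, the angle between $x$ and $y$). Since $x'$ and $y$ both lie on $\{\,|z|=\rho\,\}$ we have $|x'-y| = 2\rho\sin(\Theta/2)$, and combined with $|x'-y| \leq |x'-x| + |x-y| \leq 2d$ this gives the key estimate $\sin(\Theta/2) \leq d/\rho$. I would then take a great-circle arc from $x'$ to $y$ on $\{\,|z|=\rho\,\}$, divide it into $k := 4$ equal sub-arcs by interior points $z_2, z_3, z_4$, and let the chain be $x, z_1, z_2, z_3, z_4, y$, so $m = 4$ and each $z_i$ has norm $\rho \geq 2R$. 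Two things remain to be verified, both by elementary one-variable estimates. For the step lengths: each spherical chord has length $2\rho\sin(\Theta/8)$, and combining $\sin t \leq t$ with $\sin(\Theta/2) \geq \tfrac{2}{\pi}\cdot\tfrac{\Theta}{2}$ (concavity of $\sin$ on $[0,\tfrac{\pi}{2}]$) yields $\Theta \leq \pi d/\rho$, hence $2\rho\sin(\Theta/8) \leq \tfrac{1}{4}\rho\Theta \leq \tfrac{\pi}{4}d < d$; together with $|x - z_1| = \rho - a \leq d$ this is \eqref{eq:finite-chain-abolute}. For the diameter balls: two points of $\{\,|z|=\rho\,\}$ subtending an angle $\beta := \Theta/4 \leq \tfrac{\pi}{4}$ have a diameter ball whose distance to the origin equals $\rho\bigl(\cos(\beta/2) - \sin(\beta/2)\bigr) \geq \rho\bigl(\cos(\pi/8) - \sin(\pi/8)\bigr) > \tfrac{\rho}{2} \geq R$; so all these balls, and the one from the first step, lie in $\R^n\setminus B(0,R)$, which is \eqref{eq:finite-chain-exterior}. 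Since $m \in \{1,4\} \subset \{1,\dots,7\}$, the lemma follows.

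I do not expect a real obstacle here: beyond the reverse triangle inequality the whole thing reduces to two elementary estimates on $\sin$. The one point that needs care is the choice $k = 4$. With $k = 3$ the diameter-ball condition can fail precisely when $\Theta$ is near $\pi$ and $\rho$ is near $2R$, because $\cos(\pi/6) - \sin(\pi/6) = \tfrac{\sqrt{3}-1}{2} < \tfrac{1}{2}$; with $k = 4$ the relevant constant $\cos(\pi/8) - \sin(\pi/8)$ exceeds $\tfrac{1}{2}$ and all the inequalities close with room to spare. (This is in essence the chain construction of \cite{Hynd}, reorganised around spheres.)
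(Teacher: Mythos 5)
The paper does not prove this lemma at all --- it is stated without proof, with a reference to \cite{Hynd} --- so there is no internal argument to compare against; I am simply assessing your construction. It is correct. The key facts check out: with $a=|x|\leq |y|=\rho$ and $d=|x-y|$, the reverse triangle inequality gives $\rho-a\leq d$; the radial diameter ball $B\bigl(\tfrac{x+x'}{2},\tfrac{\rho-a}{2}\bigr)$ has $|c|-r=a\geq 2R$, so it avoids $B(0,R)$; the chord bound $|x'-y|\leq 2d$ together with concavity of $\sin$ on $[0,\pi/2]$ yields $\Theta\leq \pi d/\rho$, whence each chord of the four-fold subdivision of the arc has length $2\rho\sin(\Theta/8)\leq \tfrac{\pi}{4}d<d$ and subtends angle $\Theta/4\leq\pi/4$; and for such a chord the diameter ball has $|c|-r=\rho\bigl(\cos(\Theta/8)-\sin(\Theta/8)\bigr)\geq \rho\bigl(\cos(\pi/8)-\sin(\pi/8)\bigr)>\rho/2\geq R$, so it also avoids $B(0,R)$. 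The degenerate cases ($x'=x$, $x'=y$, $x=y$) produce empty balls and are harmless. You get $m\in\{1,4\}\subset\{1,\dots,7\}$, which is tighter than the lemma requires. Your restriction to $n\geq 2$ is exactly right and worth highlighting: the lemma as written is actually false for $n=1$ when $x$ and $y$ lie on opposite sides of $B(0,2R)$ (any step crossing the gap has a diameter ball containing the origin), but Proposition \ref{prop:holdersemi_achiev} only invokes the lemma in the case $n\geq 2$, treating $n=1$ by a separate one-dimensional argument.
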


\begin{proposition}\label{prop:holdersemi_achiev}
Let $n \geq 1$, $sp >n $, and $v \in \Dcal^{s,p}(\R^n)$. Assume that $v$ is non-constant. Then there exist two points $x_0, y_0 \in \R^n$ with $x_0 \neq y_0$ such that
$$\frac{v(x_0)-v(y_0)}{\abs{x_0-y_0}^{s-\frac{n}{p}}}= [v]_{C^{s-\frac{n}{p}}(\R^n)}.$$
\end{proposition}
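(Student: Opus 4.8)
The plan is to show that the supremum defining $[v]_{C^{s-\frac{n}{p}}(\R^n)}$ is attained by ruling out the only way it could fail, namely that all near-maximizing pairs escape to infinity. Normalize so that $[v]_{C^{s-\frac{n}{p}}(\R^n)}=1$. Pick a maximizing sequence of pairs $(x_k,y_k)$, $x_k\neq y_k$, with
\[
\frac{v(x_k)-v(y_k)}{\abs{x_k-y_k}^{s-\frac{n}{p}}}\longrightarrow 1.
\]
After applying the invariances (translation, rotation, dilation, adding a constant) we may assume $x_k=0$, $y_k=\mathrm{e}_n$, and $v(0)=0$; the rescaled functions $v_k$ still satisfy $[v_k]_{C^{s-\frac{n}{p}}}=1$ and $v_k(\mathrm{e}_n)\to 1$, but now we must also track $[v_k]_{W^{s,p}(\R^n)}$, which is \emph{not} normalized — it is the rescaling of the fixed seminorm $[v]_{W^{s,p}}$ against a pair at distance $\abs{x_k-y_k}$. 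If the distances $\abs{x_k-y_k}$ stay bounded away from $0$ and $\infty$ along a subsequence, then the $v_k$ have uniformly bounded $W^{s,p}$-seminorm and we can run exactly the Arzela--Ascoli plus weak-compactness argument from the proof of Lemma~\ref{lm:existance-extremal} to extract a limit $v_\infty$ with $v_\infty(0)=0$, $v_\infty(\mathrm{e}_n)=1$, hence $[v_\infty]_{C^{s-\frac{n}{p}}}=1$ attained at $0,\mathrm{e}_n$. But $v_\infty$ need not be (a translate/dilate of) $v$, so this is not immediately the conclusion; one needs instead to argue directly on $v$. The cleaner route: the rescaled statement we actually want is that for $v$ itself, some honest pair $(x_0,y_0)$ works.

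So here is the argument I would run on $v$ directly. The diameters $r_k:=\abs{x_k-y_k}$ split into cases. \textbf{Case 1: $r_k\to 0$ (concentration).} By the \emph{localized} Morrey estimate \eqref{eq:regional-Morrey-2},
\[
1-o(1)=\frac{\abs{v(x_k)-v(y_k)}}{\abs{x_k-y_k}^{s-\frac{n}{p}}}\leq C\,[v]_{W^{s,p}\left(\frac{x_k+y_k}{2},\frac{r_k}{2}\right)},
\]
and since $v\in\Dcal^{s,p}(\R^n)$ the right-hand side is the $W^{s,p}$-seminorm over shrinking balls; if the centers $\frac{x_k+y_k}{2}$ stay in a bounded set this tends to $0$ by absolute continuity of the integral, a contradiction, and if the centers escape to infinity the same seminorm over balls of radius $r_k/2\to 0$ near infinity also tends to $0$ (the tail of the $L^p(\R^n\times\R^n)$ integral defining $[v]_{W^{s,p}}$). \textbf{Case 2: $r_k\to\infty$ (spreading), and more generally $\abs{x_k}$ or $\abs{y_k}\to\infty$.} Here I invoke the finite chain Lemma~\ref{lm:finite-chain}: for large $k$, write $x_k,y_k$ outside some huge ball $B(0,2\rho)$ and connect them by a chain $z_1,\dots,z_m$, $m\le 7$, with each consecutive distance $\le r_k$ and each "midpoint ball" contained in $\R^n\setminus B(0,\rho)$. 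Then by \eqref{eq:regional-Morrey-2} applied along the chain and subadditivity,
\[
\abs{v(x_k)-v(y_k)}\leq \sum_{i}\abs{v(z_i)-v(z_{i+1})}\leq C\sum_i r_k^{s-\frac{n}{p}}\,[v]_{W^{s,p}(B_i)}\leq C\, r_k^{s-\frac{n}{p}}\,[v]_{W^{s,p}(\R^n\setminus B(0,\rho))},
\]
so that $\frac{\abs{v(x_k)-v(y_k)}}{r_k^{s-\frac{n}{p}}}\le C[v]_{W^{s,p}(\R^n\setminus B(0,\rho))}\to 0$ as $\rho\to\infty$ (again tail vanishing), contradicting that this ratio tends to $1$. \textbf{Case 3:} what remains is $r_k$ bounded above and below and the pairs staying in a bounded region; passing to a subsequence, $x_k\to x_0$, $y_k\to y_0$ with $x_0\ne y_0$, and by continuity of $v$ (Proposition~\ref{prop:Morrey-estimate}) the ratio passes to the limit, giving $\frac{v(x_0)-v(y_0)}{\abs{x_0-y_0}^{s-\frac{n}{p}}}=1=[v]_{C^{s-\frac{n}{p}}(\R^n)}$.

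The main obstacle is the bookkeeping in the "escape to infinity" cases: one has to be careful that in Case 2 the chain balls $B_i$ are genuinely disjoint-enough (or at least all contained in a fixed exterior region $\R^n\setminus B(0,\rho)$ whose $W^{s,p}$-mass is small) so that the sum of the $m\le 7$ seminorms is controlled by the single tail quantity $[v]_{W^{s,p}(\R^n\setminus B(0,\rho))}$ — Lemma~\ref{lm:finite-chain} is stated precisely to give this. A secondary point is handling the mixed situation where $r_k$ is bounded but $\abs{x_k}\to\infty$ (pair stays small-diameter but drifts to infinity); this is absorbed into Case 1's "centers escape" sub-case via \eqref{eq:regional-Morrey-2}, since the relevant ball has radius $r_k/2$ which, while not shrinking, sits near infinity and so again captures only a vanishing tail of $[v]_{W^{s,p}(\R^n)}$ — here one uses that for a \emph{fixed} function in $\Dcal^{s,p}$ the double integral over $B\times B$ with $B=B(c,r)$, $\abs{c}\to\infty$ and $r$ fixed, tends to zero. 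Once these tail estimates are in place the proof is just a trichotomy on $(r_k)$ and the relative size of $\abs{x_k},\abs{y_k}$.
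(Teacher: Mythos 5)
Your overall structure mirrors the paper's: a trichotomy on the behaviour of the near-maximizing pairs $(x_k,y_k)$, with shrinking pairs ruled out by the localized Morrey estimate \eqref{eq:regional-Morrey-2} together with absolute continuity of the integral, and pairs escaping to infinity ruled out by the tail-smallness of $[v]_{W^{s,p}(\R^n\setminus B(0,L))}$ via Lemma~\ref{lm:finite-chain}. Your Cases 1 and 3 are sound.

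The gap is in Case 2. You invoke Lemma~\ref{lm:finite-chain} to cover ``$|x_k|$ or $|y_k|\to\infty$,'' but the lemma requires \emph{both} endpoints to lie in $\R^n\setminus B(0,2R)$, so it does not reach the sub-case where $x_k\to x$ stays bounded while $|y_k|\to\infty$ (and hence $r_k\to\infty$): there is no large $R$ excluding both endpoints, so the chain is never built, and the localized Morrey estimate around the midpoint is also useless because the ball $B\bigl(\tfrac{x_k+y_k}{2},\tfrac{r_k}{2}\bigr)$ grows to fill a half-space. The paper treats exactly this sub-case separately (its Case~II): after normalizing $v(x)=0$ it passes to a monotone increasing subsequence of $|v(y_k)|/|y_k|^{s-n/p}$ with $2|y_k|\le |y_{k+1}|$, derives the algebraic lower bound
\[
\frac{|v(y_{k+1})-v(y_k)|}{|y_{k+1}-y_k|^{s-\frac{n}{p}}}\;\ge\; c(n,s,p)\,\frac{|v(y_k)|}{|y_k|^{s-\frac{n}{p}}},
\]
and only then applies the exterior-tail estimate to the new pair $(y_k,y_{k+1})$, both of which genuinely escape. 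Without this doubling/monotone-subsequence step, your Case~2 does not close. A second, related gap is the dimension $n=1$, which the statement allows: Lemma~\ref{lm:finite-chain} is genuinely multidimensional --- two points on opposite sides of $(-2R,2R)$ cannot be chained through $\R\setminus(-R,R)$, since any midpoint interval joining a point $<-2R$ to one $>2R$ covers $(-R,R)$. The paper gives a separate four-case argument for $n=1$, again driven by the monotone-subsequence trick; your proposal would need an analogue.
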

\begin{proof}
First, we select a pair of sequences $(x_k)_{k \in \N}, (y_k)_{k \in \N}$ such that 
$$[v]_{C^{s- \frac{n}{p}}(\R^n)} = \lim_{ k \to \infty} \frac{\abs{v(x_k)- v(y_k)}}{\abs{x_k - y_k }^{s-\frac{n}{p}}} .$$
Now we claim that
\begin{equation}\label{eq:distans-maximiz}
    \liminf_{k \to \infty} \abs{x_k - y_k} >0
\end{equation}
and
\begin{equation}\label{eq:boundedness-maxim}
    \sup_{k \in \N} \abs{x_k}, \; \sup_{k \in \N} \abs{y_k} < \infty .
\end{equation}
 It follows from \eqref{eq:boundedness-maxim} that $(x_k)_{k \in \N}$ and $(y_k)_{k \in \N}$ have convergent subsequences $x_{k_i}, \, y_{k_i}$. Due to \eqref{eq:distans-maximiz} they converge to two distinct points $x_0, y_0$. Thus, we can pass to the limit in the H\"older seminorm and conclude 
$$[v]_{C^{s- \frac{n}{p}}(\R^n)} = \lim_{i \to \infty} \Bigl\lbrace  \frac{\abs{v(x_{k_i}) - v(y_{k_i})}}{\abs{x_{k_i} - y_{k_i}}^{s-\frac{n}{p}}} \Bigr \rbrace = \frac{\abs{v(x_0)- v(y_0)}}{\abs{x_0-y_0}^{s-\frac{n}{p}}}.$$
We now argue that \eqref{eq:distans-maximiz} holds. Assume towards a contradiction that $ \underset{k \to \infty}{\lim}\, \abs{x_k-y_k}= 0$. Using \eqref{eq:regional-morrey} we arrive at
\[
\begin{aligned}
\relax [v]_{C^{s- \frac{n}{p}}(\R^n)} & = \limsup_{k \to \infty} \Bigl \lbrace \frac{\abs{v(x_k) - v(y_k)}}{\abs{x_k -y_k}^{s-\frac{n}{p}}}  \Bigr \rbrace \\
& \leq C \limsup_{k \to \infty} \, [v]_{W^{s,p}\left(B\left(\frac{x_k+y_k}{2},\frac{\abs{x_k-y_k}}{2}\right)\right)} = 0.
\end{aligned}
\]
The limit vanishes since $ \absB{B\left(\frac{x_k+y_k}{2},\frac{\abs{x_k-y_k}}{2}\right)}$ converges to zero. This contradiction concludes \eqref{eq:distans-maximiz}.

Now we turn our attention to \eqref{eq:boundedness-maxim}. We split the proof into two different cases depending on whether $n>1$ or not.\\
\textbf{Proof for $n=1$}. Suppose that $n=1$. Since the H\"older seminorm is symmetric with respect to $x$ and $y$ we may assume that $x_k \leq y_k$. For the sake of contradiction, assume that \eqref{eq:boundedness-maxim} fails. After passing to a subsequence, we end up in one of the following four possible cases: 
\[
\begin{aligned}
  \text{I. }   &x_k,\,y_k \to \infty   \\
  \text{II. }  &x_k \to x , \; y_k \to \infty  \\
  \text{III. } &x_k \to -\infty , \; y_k \to \infty \\
  \text{IV. }  &x_k \to -\infty , \; y_k \to y .
\end{aligned}
\]
\textit{Case I.}
Let $\delta > 0$. Since $\frac{\abs{v(x)- v(y)}^p}{\abs{x-y}^{n+sp}}$ is integrable on $\R \times \R$, using the monotone convergence theorem there exists $L>0$ such that
\[
[v]_{W^{s,p}(\R^n)} - [v]_{W^{s,p}([-L,L])} \leq \delta.
\]
In particular
\begin{equation}\label{eq:holder-sem-small-energy-n1}
    [v]_{W^{s,p}(\R \setminus[-L ,L])} \leq \delta. 
\end{equation}
For sufficiently large values of $k$ 
$$L < x_k \leq y_k.$$
Using Proposition \ref{prop:Morrey-estimate} together with \eqref{eq:holder-sem-small-energy-n1} we arrive at
\[
\frac{\abs{v(y_k)-v(x_k)}}{\abs{y_k-x_k}^{s-\frac{n}{p}}} 
\leq C [v]_{W^{s,p}\left( B\left( \frac{x_k+y_k}{2},   \frac{\abs{x_k-y_k}}{2}   \right) \right)} 
\leq C[v]_{W^{s,p}(\R \setminus[-L,L])} 
\leq C\delta.
\]
Here we have used
\[
B\left( \frac{x_k+y_k}{2}, \frac{\abs{x_k-y_k}}{2}  \right) \subset \R\setminus [-L,L].
\]
Note that the leftmost point in the closure of the ball is $x_k$.
Hence, for every $\epsilon >0$ we have
\[
[v]_{C^{s-\frac{n}{p}}(\R^n)}= \lim_{k \to \infty} \frac{\abs{v(y_k)-v(x_k)}}{\abs{y_k-x_k}^{s-\frac{n}{p}}} \leq \epsilon.
\]
This forces $u$ to be constant. \\
\textit{Case II.} 
Without loss of generality, we may assume that $x=0$ and $v(0)=0$. Notice that 
\[
 \lim_{k \to \infty} \frac{\abs{v(y_k)}}{(y_k)^{s-\frac{n}{p}}}= \lim_{k \to \infty} \frac{\abs{v(y_k) - v(0)}}{(y_k-0)^{s-\frac{n}{p}}}=\lim_{k \to \infty} \frac{\abs{v(y_k) - v(x_k)}}{(y_k-x_k)^{s-\frac{n}{p}}}= [v]_{C^{s-\frac{n}{p}}(\R^n)}.
\]
By passing to a subsequence, we may suppose 
$$0<2y_k \leq y_{k+1}.$$
Recall that every sequence of real numbers has a monotone subsequence. We select a monotone subsequence of $\frac{\abs{v(y_k)}}{(y_k)^{s-\frac{n}{p}}}$ and we denote it again by the same index $k$.  Since 
$$[v]_{C^{s-\frac{n}{p}}(\R^n)} \geq \frac{\abs{v(y_k)}}{(y_k)^{s-\frac{n}{p}}},$$
the monotone subsequence must be increasing, that is
$$\frac{\abs{v(y_k)}}{(y_k)^{s-\frac{n}{p}}} \leq \frac{\abs{v(y_{k+1})}}{(y_{k+1})^{s-\frac{n}{p}}}.$$
We compute
\[
\begin{aligned}
    \frac{\abs{v(y_k)-v(y_{k+1})}}{(y_{k+1}-y_k)^{s-\frac{n}{p}}} &\geq \frac{\abs{v(y_{k+1})}-\abs{v(y_k)}}{(y_{k+1}-y_k)^{s-\frac{n}{p}}} \\
    &= \frac{\abs{v(y_{k+1})}}{(y_{k+1})^{s-\frac{n}{p}}} \frac{(y_{k+1})^{s-\frac{n}{p}}}{(y_{k+1}-y_k)^{s-\frac{n}{p}}} -\frac{\abs{v(y_k)}}{(y_k)^{s-\frac{n}{p}}} \frac{(y_k)^{s-\frac{n}{p}}}{(y_{k+1}-y_k)^{s-\frac{n}{p}}}\\
    & \geq \frac{\abs{v(y_k)}}{(y_k)^{s-\frac{n}{p}}} \left( \frac{(y_{k+1})^{s-\frac{n}{p}}- (y_k)^{s-\frac{n}{p}}}{(y_{k+1}-y_k)^{s-\frac{n}{p}}} \right)\\
    &\geq  \frac{\abs{v(y_k)}}{(y_k)^{s-\frac{n}{p}}} \left( \frac{(y_{k+1})^{s-\frac{n}{p}}- (y_k)^{s-\frac{n}{p}}}{(y_{k+1})^{s-\frac{n}{p}}} \right) \\
    &\geq  \frac{\abs{v(y_k)}}{(y_k)^{s-\frac{n}{p}}} \left(1- \left(\frac{1}{2} \right )^{s-\frac{n}{p}} \right ).
\end{aligned}
\]
Now we can argue as in Case I to show that given $\epsilon >0$, for large enough values of $k$, we have
\[
 \frac{\abs{v(y_k)-v(y_{k+1})}}{(y_{k+1}-y_k)^{s-\frac{n}{p}}} \leq \epsilon.
\]
Hence,
\[
[v]_{C^{s-\frac{n}{p}}(\R^n)}=\lim_{k \to \infty}   \frac{\abs{v(y_k)}}{(y_k)^{s-\frac{n}{p}}} \leq  \left(1- \left(\frac{1}{2} \right )^{s-\frac{n}{p}} \right )^{-1}\epsilon,
\]
This implies that $v$ is constant.\\
\textit{Case III.}
By subtracting a constant from $v$ we may assume that $v(0)=0$. In addition, we may assume that 
$$x_k < 0 < y_k \qquad \text{ for all } k \in \N .$$
Notice that 
\[
\begin{aligned}
\frac{\abs{v(x_k) - v(y_k)}}{\abs{x_k - y_k}^{s-\frac{n}{p}}} &\leq \frac{\abs{v(x_k)}}{\abs{x_k - y_k}^{s-\frac{n}{p}}}  + \frac{\abs{v(y_k)}}{\abs{x_k - y_k}^{s-\frac{n}{p}}} \\
& \leq \frac{\abs{v(x_k)}}{\abs{x_k}^{s-\frac{n}{p}}} +  \frac{\abs{v(y_k)}}{\abs{y_k}^{s-\frac{n}{p}}} .
\end{aligned}
\]
With a similar argument to the one in Case II, one can show after passing to a subsequence that
\[
\frac{\abs{v(y_k)}}{\abs{y_k}^{s-\frac{n}{p}}} \leq \left( 1- \left( \frac{1}{2} \right)^{s-\frac{n}{p}} \right)^{-1} \frac{\abs{v(y_{k+1}) - v(y_k)}}{\abs{y_{k+1} - y_k}^{s-\frac{n}{p}}}
\]
and
\[
\frac{\abs{v(x_k)}}{\abs{x_k}^{s-\frac{n}{p}}} \leq \left( 1- \left( \frac{1}{2} \right)^{s-\frac{n}{p}} \right)^{-1} \frac{\abs{v(x_k) - v(x_{k+1})}}{\abs{x_k - x_{k+1}}^{s-\frac{n}{p}}}.
\]
As in Case II, this implies that $[v]_{C^{s-\frac{n}{p}}(\R^n)}$ is zero and $v$ is constant.\\
\textit{Case IV.}
This case is similar to Case II.\\
\textbf{Proof for $n\geq 2$}.

We argue towards a contradiction, assume that \eqref{eq:boundedness-maxim} does not hold. We consider two cases.
\[
\begin{aligned}
    \text{I.}& \; \limsup_{k \to \infty} \abs{x_k}=\limsup_{k \to \infty} \abs{y_k}= \infty, \\
    \text{II.} & \; \limsup_{k \to \infty} \abs{x_k} < \infty, \; \limsup_{k \to \infty} \abs{y_k}= \infty .
\end{aligned}
\]
\textit{Case I.}
\begin{equation}
    \limsup_{k \to \infty} \abs{x_k}=\limsup_{k \to \infty} \abs{y_k}= \infty.
\end{equation}
After passing to a subsequence, we may assume that 
\begin{equation}\label{eq:maximize-infty}
    \lim_{k \to \infty} \abs{x_k}=\lim_{k \to \infty} \abs{y_k}= \infty.
\end{equation}
In particular 
$$R_k:= \frac{\min{ \lbrace \abs{x_k}, \abs{y_k} \rbrace}}{2}$$
is a divergent sequence.
Using Lemma \ref{lm:finite-chain} with $R=R_k$, $x=x_k$, and $y=y_k$ we can find $z_k^1, \ldots z_k^m$ satisfying \eqref{eq:finite-chain-abolute} and \eqref{eq:finite-chain-exterior}. Now we argue as in the proof for $n=1$,  Case I. 

Let $\delta>0$, and choose $L>0$ large enough so that 
$$[v]_{W^{s,p} \left(\R^n \setminus B(0,L) \right)} < \delta .$$
For large enough values of $k$, we may assume $R_k > L$. Then the balls 
$$B \left( \frac{x_k+z_k^1}{2}, \frac{r_{0,k}}{2}  \right) , \, \ldots \,
B \left( \frac{z_k^i+z_k^{i+1}}{2}, \frac{r_{i,k}}{2}  \right) , \,\ldots  ,\,
B \left( \frac{z_k^m+y_k}{2}, \frac{r_{m,k}}{2}  \right)  $$
with $r_{i,k}$ defined as in \eqref{eq:finite-chain-exterior}, are subsets of $\R^n \setminus B(0,L)$.
By Proposition \ref{prop:Morrey-estimate} and the triangle inequality
\[
\begin{aligned}
    \abs{v(x_k) -v(y_k)} &\leq \abs{v(x_k) - v(z_k^1)} + \sum_{j=1}^{m-1} \abs{v(z_k^{j}) - v(z_k^{j+1})} + \abs{v(z_k^m) - v(y_k)} \\
    & \leq C [v]_{W^{s,p}\left(\R^n \setminus B(0,L)\right)} \left( \sum_{j=0}^{m} r_{j,k}^{s-\frac{n}{p}}  \right) 
    \leq (m+1)C [v]_{W^{s,p}\left(\R^n \setminus B(0,L)\right)} \abs{x_k-y_k}^{s-\frac{n}{p}} \\
    & \leq 8C \delta \abs{x_k-y_k}^{s- \frac{n}{p}}.
\end{aligned}
\]
Therefore, for any $\epsilon >0$
\[
[v]_{C^{s-\frac{n}{p}}(\R^n)} = \lim_{k \to \infty} \frac{\abs{v(x_k) -v(y_k)}}{\abs{x_k-y_k}^{s- \frac{n}{p}}} \leq \epsilon,
\]
and $v$ must be constant.\\
\textit{Case II.}
After passing to a subsequence we may assume that 
$$\lim_{k \to \infty } x_k= x , \quad \text{and} \quad \lim_{k \to \infty} \abs{y_k}= \infty.  $$
Without loss of generality, we may assume that $x=0$ and $v(0)=0$. Notice that 
\[
[v]_{C^{s-\frac{n}{p}}(\R^n)} = \lim_{k \to \infty} \frac{\abs{v(x_k) -v(y_k)}}{\abs{x_k-y_k}^{s- \frac{n}{p}}} = \lim_{k \to \infty} \frac{\abs{v(x) -v(y_k)}}{\abs{x-y_k}^{s- \frac{n}{p}}} = \lim_{k \to \infty} \frac{\abs{v(y_k)}}{\abs{y_k}^{s-\frac{n}{p}}}.
\]
As in the earlier case $II$ corresponding to $n=1$, by passing to a subsequence, we may assume that 
\[
\frac{\abs{v(y_k)}}{\abs{y_k}^{s- \frac{n}{p}}} \leq \frac{\abs{v(y_{k+1})}}{\abs{y_{k+1}}^{s- \frac{n}{p}}},
\]
and 
\[
0 < 2 \abs{y_k} \leq \abs{y_{k+1}}.
\]
Using the triangle inequality, we have 
\[
\begin{aligned}
    \frac{\abs{v(y_{k+1}) -v(y_k)}}{\abs{y_{k+1} - y_k}^{s-\frac{n}{p}}} &\geq  \frac{\abs{v(y_{k+1}) } -\abs{v(y_k)}}{\abs{y_{k+1} - y_k}^{s-\frac{n}{p}}} \\
    &= \frac{\abs{v(y_{k+1}) } }{\abs{y_{k+1}}^{s-\frac{n}{p}}} \frac{\abs{y_{k+1}}^{s-\frac{n}{p}}}{\abs{y_{k+1} - y_k}^{s-\frac{n}{p}}} - \frac{\abs{v(y_k)}}{\abs{y_k }^{s- \frac{n}{p}}} \frac{\abs{y_k }^{s- \frac{n}{p}}}{\abs{y_{k+1} - y_k}^{s-\frac{n}{p}}} \\
   & \geq \frac{\abs{v(y_k)}}{\abs{y_k }^{s- \frac{n}{p}}} \left( \frac{\abs{y_{k+1}}^{s-\frac{n}{p}} - \abs{y_k}^{s-\frac{n}{p}}}{\abs{y_{k+1}  - y_k}^{s-\frac{n}{p}}} \right) \\
   &\geq \frac{\abs{v(y_k)}}{\abs{y_k }^{s- \frac{n}{p}}} \left( \frac{\abs{y_{k+1}}^{s-\frac{n}{p}} - \frac{1}{2^{s-n/p}}\abs{y_{k+1}}^{s-\frac{n}{p}}}{ \left(\abs{y_{k+1}} + \abs{y_k }\right)^{s-\frac{n}{p}}  } \right)\\
   & \geq \frac{\abs{v(y_k)}}{\abs{y_k }^{s- \frac{n}{p}}}  
   \left( \frac{\abs{y_{k+1}}^{s-\frac{n}{p}}\left( 1- \frac{1}{2^{s-n/p}}\right)}{\left( \frac{3}{2} \abs{y_{k+1} } \right)^{s-\frac{n}{p}}} \right) \\
   &= \frac{\abs{v(y_k)}}{\abs{y_k }^{s- \frac{n}{p}}}  \left( \left( \frac{2}{3}\right)^{s-\frac{n}{p}} - \left(\frac{1}{3} \right)^{s-\frac{n}{p}} \right).
\end{aligned}
\]
As in case I, we can show that given any $\epsilon>0$, for $k$ large enough
\[
\frac{\abs{v(y_{k+1}) -v(y_k)}}{\abs{y_{k+1} - y_k}^{s-\frac{n}{p}}} \leq \epsilon.
\]
Hence 
\[
[v]_{C^{s-\frac{n}{p}}(\R^n)} = \lim_{k \to \infty} \frac{\abs{v(x) -v(y_k)}}{\abs{x-y_k}^{s- \frac{n}{p}}}  \leq \left( \left( \frac{2}{3}\right)^{s-\frac{n}{p}} - \left(\frac{1}{3} \right)^{s-\frac{n}{p}} \right)^{-1} \epsilon.
\]
This implies that $v$ is constant.
\end{proof}

\begin{lemma}\label{lm:equation}
Let $u$ be a Morrey extremal whose $s-\frac{n}{p}$ H\"older seminorm is attained at $x_0,y_0$. Then $u$ satisfies the following equation
\begin{equation}\label{eq:harmonicity}
   C_{\star}^{p} (-\Delta _p)^s u = \frac{J_p(u(x_0)-u(y_0))}{\abs{x_0-y_0}^{sp-n}} \Bigl(\delta_{x_0}-\delta_{y_0} \Bigr).
\end{equation}
In particular $u$ is $(s,p)$-harmonic in $\R^n \setminus \lbrace x_0, y_0 \rbrace$.
\end{lemma}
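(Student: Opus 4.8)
The approach is to read off \eqref{eq:harmonicity} as the Euler--Lagrange equation of the variational problem solved by $u$. The only obstruction to the standard first-variation computation is that the H\"older seminorm fails to be differentiable along variations, and the plan is to sidestep this by replacing the H\"older-seminorm term by a smooth lower bound that is tangent to it at the base point. Fix $\phi \in C^\infty_c(\R^n)$ and abbreviate $A := u(x_0)-u(y_0)$, $B := \phi(x_0)-\phi(y_0)$, $r := \abs{x_0-y_0}$. Since a Morrey extremal is nonconstant and its seminorm is attained at the distinct points $x_0,y_0$, we have $A \neq 0$, hence $A+tB \neq 0$ for $\abs{t}$ small. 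Introduce, for such $t$,
\[
\tilde{h}(t) := [u+t\phi]_{W^{s,p}(\R^n)}^p - C_\star^{-p}\,\frac{\abs{A+tB}^p}{\abs{x_0-y_0}^{sp-n}}.
\]

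\textbf{Step 1 ($\tilde h\ge 0$, $\tilde h(0)=0$).} By the definition of the H\"older seminorm, $[u+t\phi]_{C^{s-\frac{n}{p}}(\R^n)}^p \geq \abs{A+tB}^p\,r^{-(sp-n)}$, so the Morrey inequality \eqref{eq:morrey-ineq} applied to $u+t\phi$ gives $\tilde h(t) \geq [u+t\phi]_{W^{s,p}(\R^n)}^p - C_\star^{-p}[u+t\phi]_{C^{s-\frac{n}{p}}(\R^n)}^p \geq 0$. At $t=0$, the attainment hypothesis gives $[u]_{C^{s-\frac{n}{p}}(\R^n)}^p = \abs{A}^p\,r^{-(sp-n)}$, while extremality gives $[u]_{W^{s,p}(\R^n)}^p = C_\star^{-p}[u]_{C^{s-\frac{n}{p}}(\R^n)}^p$; hence $\tilde h(0)=0$. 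Thus $t=0$ is an interior local minimum of $\tilde h$.

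\textbf{Step 2 (differentiate).} The map $t \mapsto [u+t\phi]_{W^{s,p}(\R^n)}^p = \iint_{\R^n\times\R^n} \abs{u(x)-u(y)+t(\phi(x)-\phi(y))}^p\,\abs{x-y}^{-(n+sp)}\,\dd x\,\dd y$ is differentiable at $t=0$ with derivative $p \iint_{\R^n\times\R^n} J_p(u(x)-u(y))(\phi(x)-\phi(y))\,\abs{x-y}^{-(n+sp)}\,\dd x\,\dd y$; this is dominated convergence, the majorant coming from the elementary inequality $\bigl|\,\abs{a+tb}^p - \abs{a}^p\,\bigr| \leq p\abs{t}\abs{b}(\abs{a}^{p-1}+\abs{b}^{p-1})$ for $\abs{t}\leq 1$, together with H\"older's inequality and $\phi \in W^{s,p}(\R^n)$. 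The second term of $\tilde h$ is smooth near $t=0$ (here $A \neq 0$ is used) with derivative $p\,J_p(A)\,B\,r^{-(sp-n)}$ at $t=0$. From $\tilde h'(0)=0$, multiplying by $C_\star^p/p$ and substituting back the values of $A$ and $B$, we get, for every $\phi \in C^\infty_c(\R^n)$,
\[
C_\star^p \iint_{\R^n\times\R^n} \frac{J_p(u(x)-u(y))(\phi(x)-\phi(y))}{\abs{x-y}^{n+sp}}\,\dd x\,\dd y = \frac{J_p(u(x_0)-u(y_0))}{\abs{x_0-y_0}^{sp-n}}\,\bigl(\phi(x_0)-\phi(y_0)\bigr),
\]
which is \eqref{eq:harmonicity} tested against $\phi$.

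\textbf{Step 3 ($(s,p)$-harmonicity off $\{x_0,y_0\}$).} For $\phi \in C^\infty_c(\R^n\setminus\{x_0,y_0\})$ the right-hand side above vanishes, so $u$ is a local weak solution of $(-\Delta_p)^s u=0$ in $\R^n\setminus\{x_0,y_0\}$; this is meaningful since $u \in W^{s,p}_{\loc}$ (being in $\Dcal^{s,p}(\R^n)$ and continuous, hence locally bounded) and $u \in L^{p-1}_{sp}(\R^n)$ (from the growth bound $\abs{u(x)} \leq \abs{u(0)} + [u]_{C^{s-\frac{n}{p}}(\R^n)}\abs{x}^{s-\frac{n}{p}}$). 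Since $u$ is locally bounded, the weak and viscosity notions coincide by \cite[Theorem 1.2]{KKL}, and therefore $u$ is $(s,p)$-harmonic in $\R^n\setminus\{x_0,y_0\}$. I expect the only genuinely technical point to be the dominated-convergence justification in Step 2; the rest is a soft first-variation argument whose single idea is the tangent smooth minorant $t\mapsto \abs{A+tB}^p\,\abs{x_0-y_0}^{-(sp-n)}$ of the H\"older-seminorm term.
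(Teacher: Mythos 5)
Your proof is correct and follows essentially the same first-variation argument as the paper: apply the sharp Morrey inequality to $u+t\phi$, lower-bound the H\"older seminorm by the difference quotient at $(x_0,y_0)$, and set the derivative at $t=0$ equal to zero. The paper uses one-sided derivatives together with the convexity inequality $pJ_p(a)h \leq \abs{a+h}^p-\abs{a}^p$ (then swaps $\phi$ for $-\phi$), which avoids any need for $u(x_0)\neq u(y_0)$, whereas your packaging as an interior minimum of $\tilde h$ requires $A\neq 0$ to differentiate the second term; this is harmless since the lemma is trivial when $u$ is constant, and you additionally supply the dominated-convergence majorant that the paper leaves implicit.
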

\begin{proof}
We prove that $u$ is a solution of \eqref{eq:harmonicity} in the following sense: For any $\phi \in \Dcal^{s,p}(\R^n)$ we show that 

\[
    \frac{J_p\left(u(x_0) - u(y_0)\right)\left(\phi(x_0)-\phi(y_0)\right)}{\abs{x_0-y_0}^{sp-n}}= C_\star^p \iint_{\R^n \times \R^n} \frac{J_p(u(x)-u(y))(\phi(x) - \phi(y))}{\abs{x-y}^{n+sp}} \dd x \dd y.
   \]

  Since $u$ is a Morrey extremal, we have
   \begin{equation}\label{eq:pde-seminorm-extremal}
      [u]_{C^{s-\frac{n}{p}}(\R^n)}^p= \frac{\abs{u(x_0)- u(y_0)}^p}{\abs{x_0-y_0}^{sp-n}}=C_\star^p [u]_{W^{s,p}(\R^n)}^p.
   \end{equation}
   
   Using Morrey's inequality \eqref{eq:morrey-ineq} we obtain
   \begin{equation}\label{eq:pde-morreyineq}
       \frac{\abs{u(x_0)-u(y_0) + t (\phi(x_0) - \phi(y_0))}^p}{\abs{x_0-y_0}^{sp-n}} \leq C_\star^p [u+t\phi]_{W^{s,p}(\R^n)}^p.
   \end{equation}
   Notice that $a \to \abs{a}^p$ is a convex function. In particular
   $$ pJ_p(a) h \leq \abs{a+ h}^p - \abs{a}^p .$$
   Subtracting \eqref{eq:pde-seminorm-extremal} from \eqref{eq:pde-morreyineq} and using the convexity of $a \to \abs{a}^p$ we arrive at 
   \[
   \begin{aligned}
       tp \,\frac{J_p\left(u(x_0) - u(y_0)\right)\left(\phi(x_0)-\phi(y_0)\right)}{\abs{x_0-y_0}^{sp-n}}\leq C_\star^p \left( [u+t \phi]_{W^{s,p}(\R^n)}-[u]_{W^{s,p}(\R^n)} \right).
   \end{aligned}
   \]
   Hence, for $t\geq 0$ we arrive at 
   \[
   \frac{J_p\left(u(x_0) - u(y_0)\right)\left(\phi(x_0)-\phi(y_0)\right)}{\abs{x_0-y_0}^{sp-n}}\leq C_\star^p \frac{[u+t \phi]_{W^{s,p}(\R^n)}-[u]_{W^{s,p}(\R^n)}}{tp}.
   \]
   As we let $t$ tend to zero, we obtain
   \[
   \lim_{t \to 0^+} \frac{[u+t \phi]_{W^{s,p}(\R^n)}-[u]_{W^{s,p}(\R^n)}}{tp} = \iint_{\R^n \times \R^n} \frac{J_p(u(x)-u(y))(\phi(x) - \phi(y))}{\abs{x-y}^{n+sp}} \dd x \dd y.
   \]
   Therefore, we have established
   \[
   \frac{J_p\left(u(x_0) - u(y_0)\right)\left(\phi(x_0)-\phi(y_0)\right)}{\abs{x_0-y_0}^{sp-n}}\leq C_\star^p \iint_{\R^n \times \R^n} \frac{J_p(u(x)-u(y))(\phi(x) - \phi(y))}{\abs{x-y}^{n+sp}} \dd x \dd y.
   \]
   we obtain the reverse inequality by replacing $\phi$ with $-\phi$. Hence, we arrive at 
   \[
    \frac{J_p\left(u(x_0) - u(y_0)\right)\left(\phi(x_0)-\phi(y_0)\right)}{\abs{x_0-y_0}^{sp-n}}= C_\star^p \iint_{\R^n \times \R^n} \frac{J_p(u(x)-u(y))(\phi(x) - \phi(y))}{\abs{x-y}^{n+sp}} \dd x \dd y.
   \]
   In particular if $\phi \in C^\infty_c(\R^n \setminus \lbrace x_0 ,y_0 \rbrace)$ we have
   \[
   \iint_{\R^n \times \R^n} \frac{J_p(u(x)-u(y))(\phi(x) - \phi(y))}{\abs{x-y}^{n+sp}} \dd x \dd y =0.
   \]
   Since $u$ is H\"older continuous and hence locally bounded in $\R^n$, in view of the equivalence of weak and viscosity solutions in \cite[Theorem 1.2]{KKL}, $u$ is $(s,p)$-harmonic in $\R^n \setminus \lbrace x_0,y_0 \rbrace$
\end{proof}

Now we recall Clarkson's inequality, \cite[Theorem 2]{C}
\begin{theorem}
    Let $(X, \Fcal,\mu )$ be a measure space. Let, $p> 1$ and $f,g$ be measurable functions in $L^p(X, \dd \mu)$. Then for $p\geq 2$
    \[
\normB{ \frac{f+g}{2} }_{L^p}^p +\normB{\frac{f-g}{2} }_{L^p}^p \leq \frac{1}{2} \left( \norm{f}_{L^p}^p +\norm{g}_{L^p}^p \right) ,
\]
and for $1<p \leq 2$
\[
    \normB{ \frac{f+g}{2} }_{L^p}^{\frac{p}{p-1}} +\normB{\frac{f-g}{2} }_{L^p}^{\frac{p}{p-1}} \leq  \left( \frac{1}{2} \norm{f}_{L^p}^p + \frac{1}{2} \norm{g}_{L^p}^p \right)^{\frac{1}{p-1}} .
\]
\end{theorem}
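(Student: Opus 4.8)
The plan is to handle the two ranges of $p$ separately, in each case reducing the functional inequality to a pointwise inequality for scalars and then integrating.

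\emph{The case $p \geq 2$.} Here I would first establish the pointwise bound $\abs{\alpha+\beta}^p + \abs{\alpha-\beta}^p \leq 2^{p-1}\bigl(\abs{\alpha}^p + \abs{\beta}^p\bigr)$ for all scalars $\alpha,\beta$. This follows by chaining three elementary facts: the parallelogram identity $\abs{\alpha+\beta}^2 + \abs{\alpha-\beta}^2 = 2\abs{\alpha}^2 + 2\abs{\beta}^2$; the superadditivity $x^{r}+y^{r} \leq (x+y)^{r}$ for $x,y\geq 0$ and $r = p/2 \geq 1$, applied with $x=\abs{\alpha+\beta}^2$ and $y=\abs{\alpha-\beta}^2$; and the convexity of $t\mapsto t^{p/2}$, which gives $\bigl(\abs{\alpha}^2+\abs{\beta}^2\bigr)^{p/2} \leq 2^{p/2-1}\bigl(\abs{\alpha}^p+\abs{\beta}^p\bigr)$. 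Applying the resulting pointwise inequality with $\alpha=f(x)$, $\beta=g(x)$, integrating over $X$, and dividing by $2^p$ yields the first Clarkson inequality.

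\emph{The case $1 < p \leq 2$.} Write $q = p/(p-1)$, so $q \geq 2$ and $p/q = p-1$. The crux is the pointwise inequality
\[
\bigl(\abs{\alpha}^{q}+\abs{\beta}^{q}\bigr)^{p/q} \;\leq\; \frac{\abs{\alpha+\beta}^p + \abs{\alpha-\beta}^p}{2}, \qquad \alpha,\beta \text{ scalars.}
\]
Both sides are homogeneous of degree $p$, so after normalizing (say $\abs{\alpha}\geq\abs{\beta}$ and $\abs{\alpha}=1$) this reduces to the one–variable statement $(1+t^{q})^{p/q} \leq \tfrac12\bigl((1+t)^p+(1-t)^p\bigr)$ for $t\in[0,1]$, which one checks by elementary calculus; note equality at $t=0$ and $t=1$. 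Granting this, set $u = \tfrac12(f+g)$ and $v = \tfrac12(f-g)$, so that $u+v=f$ and $u-v=g$; applying the pointwise inequality with $\alpha=u(x)$, $\beta=v(x)$ and integrating gives $\int_X\bigl(\abs{u}^{q}+\abs{v}^{q}\bigr)^{p/q}\,d\mu \leq \tfrac12\bigl(\norm{f}_{L^p}^p+\norm{g}_{L^p}^p\bigr)$. Finally I would apply the reverse Minkowski inequality in $L^{r}$ with $r = p-1 \in (0,1)$: taking $F=\abs{u}^{q}$ and $G=\abs{v}^{q}$, one has $qr=p$, so $\norm{F+G}_{L^{r}} \geq \norm{F}_{L^{r}}+\norm{G}_{L^{r}}$ becomes $\bigl(\int_X(\abs{u}^{q}+\abs{v}^{q})^{p/q}\,d\mu\bigr)^{q/p} \geq \norm{u}_{L^p}^{q}+\norm{v}_{L^p}^{q}$. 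Combining the last two displays and using $q/p = 1/(p-1)$ gives the second Clarkson inequality, since $u = \tfrac12(f+g)$ and $v = \tfrac12(f-g)$.

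The step I expect to be the main obstacle is the pointwise inequality for $1 < p \leq 2$: in contrast to the case $p \geq 2$, it does not follow from the parallelogram law and convexity alone, and its one–variable reduction, while elementary, is the genuinely delicate point — it is sharp, with the equality cases $t=0,1$, and is in fact a special instance of Hanner's inequality, from which (if one prefers) both Clarkson inequalities can alternatively be deduced at once. The remaining ingredients — the reverse Minkowski inequality for exponents below $1$ and the exponent bookkeeping — are routine.
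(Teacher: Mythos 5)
The paper does not prove this statement; it simply invokes it by citing Clarkson's original paper, so there is no in-text argument to compare against. Your proposal is therefore assessed on its own merits.

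Your treatment of $p\geq 2$ is complete and correct: the chain parallelogram $\Rightarrow$ superadditivity of $t\mapsto t^{p/2}$ $\Rightarrow$ convexity gives the pointwise bound $\abs{\alpha+\beta}^p+\abs{\alpha-\beta}^p\leq 2^{p-1}(\abs{\alpha}^p+\abs{\beta}^p)$, and integration and division by $2^p$ finish the job. For $1<p\leq 2$, the reduction is also set up correctly: once the scalar inequality $\bigl(\abs{\alpha}^q+\abs{\beta}^q\bigr)^{p/q}\leq\tfrac12\bigl(\abs{\alpha+\beta}^p+\abs{\alpha-\beta}^p\bigr)$ is available, integrating over $X$ and applying the reverse Minkowski inequality in $L^{p-1}$ to $F=\abs{u}^q$, $G=\abs{v}^q$ (noting $q(p-1)=p$) produces exactly the stated inequality; the exponent bookkeeping is right.

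The genuine gap is that the scalar inequality for $1<p\leq 2$ is asserted, not proved, and your parenthetical ``one checks by elementary calculus'' understates the difficulty: in its normalized form $(1+t^q)^{p/q}\leq\tfrac12\bigl((1+t)^p+(1-t)^p\bigr)$ the two sides agree in value at $t=0$ and in both value and first derivative at $t=1$, so the comparison is not a routine monotonicity argument; it is really the entire analytic content of the second Clarkson inequality and is precisely where Clarkson's original paper expends its effort (the standard route compares the even power series $\tfrac12\bigl((1+t)^p+(1-t)^p\bigr)=\sum_{k\geq 0}\binom{p}{2k}t^{2k}$, whose coefficients are all positive for $1<p<2$, term by term against an expansion of $(1+t^q)^{p/q}$). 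You do flag this as the main obstacle, which is honest, but the lemma still needs to be established for the argument to close. I would also push back mildly on the remark that the pointwise inequality ``is in fact a special instance of Hanner's inequality'': the scalar case of Hanner for real numbers, $\abs{\alpha+\beta}^p+\abs{\alpha-\beta}^p\geq(\abs{\alpha}+\abs{\beta})^p+\absb{\abs{\alpha}-\abs{\beta}}^p$, is an identity and carries no information, so the inequality you need is a genuinely separate scalar fact, not a specialization of Hanner. Your larger point, that Hanner's inequality in $L^p$ is a stronger statement from which both Clarkson inequalities follow, is of course correct.
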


This inequality implies that for $p\geq 2$, 
\begin{equation}\label{eq:clark}
\left[\frac{u+v}{2}\right]_{W^{s,p}(\R^n)}^p + \left[\frac{u-v}{2}\right]_{W^{s,p}(\R^n)}^p \leq \frac{1}{2}\left( [u]_{W^{s,p}(\R^n)}^p+ [v]_{W^{s,p}(\R^n)}^p \right),
\end{equation}
and for $1<p\leq 2$
\begin{equation}\label{eq:clark2}
\left[\frac{u+v}{2}\right]_{W^{s,p}(\R^n)}^{\frac{p}{p-1}} + \left[\frac{u-v}{2}\right]_{W^{s,p}(\R^n)}^{\frac{p}{p-1}} \leq \left( \frac{1}{2} [u]_{W^{s,p}(\R^n)}^p+ \frac{1}{2} [v]_{W^{s,p}(\R^n)}^p \right)^{\frac{1}{p-1}}.
\end{equation}
Using the above inequality, we prove the following lemma. 
\begin{lemma}\label{lm:eq-case-energy}
    Suppose that $x_0,y_0\in \R^n$ are two distinct points and let $u\in \Dcal^{s,p}(\R^n)$ be a Morrey extremal which satisfies
    $$[u]_{C^{s-\frac{n}{p}}(\R^n)}= \frac{\abs{u(x_0)- u(y_0)}}{\abs{x_0-y_0}^{s-\frac{n}{p}}}.$$
    Assume that $v \in \Dcal^{s,p}(\R^n)$ satisfies
    $$v(x_0)=u(x_0) \quad \text{and} \quad v(y_0)=u(y_0),$$
    and
    $$[v]_{W^{s,p}(\R^n)} \leq [u]_{W^{s,p}(\R^n)}. $$
    Then $u=v$.
\end{lemma}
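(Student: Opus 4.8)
The plan is to exploit the uniform convexity of the $W^{s,p}$ seminorm encoded in Clarkson's inequalities \eqref{eq:clark} and \eqref{eq:clark2}, together with the observation that the midpoint $w := \tfrac{u+v}{2}$ still attains the extremal ratio at $x_0,y_0$.

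First I would note that since $v \in \Dcal^{s,p}(\R^n)$ with $sp>n$, Proposition \ref{prop:Morrey-estimate} endows $v$ with a (H\"older) continuous version, which we use throughout; the same holds for $u$. By Minkowski's inequality applied to the difference quotient, $[w]_{W^{s,p}(\R^n)} \leq \tfrac12\bigl([u]_{W^{s,p}(\R^n)} + [v]_{W^{s,p}(\R^n)}\bigr) < \infty$, so $w \in \Dcal^{s,p}(\R^n)$ and $w$ is continuous. Since $v(x_0)=u(x_0)$ and $v(y_0)=u(y_0)$, we have $w(x_0)=u(x_0)$ and $w(y_0)=u(y_0)$, hence
\[
[w]_{C^{s-\frac{n}{p}}(\R^n)} \geq \frac{\abs{w(x_0)-w(y_0)}}{\abs{x_0-y_0}^{s-\frac{n}{p}}} = \frac{\abs{u(x_0)-u(y_0)}}{\abs{x_0-y_0}^{s-\frac{n}{p}}} = C_\star [u]_{W^{s,p}(\R^n)},
\]
the last equality because $u$ is a Morrey extremal attaining its H\"older seminorm at $x_0,y_0$. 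On the other hand, Morrey's inequality \eqref{eq:morrey-ineq} applied to $w$ gives $[w]_{C^{s-\frac{n}{p}}(\R^n)} \leq C_\star [w]_{W^{s,p}(\R^n)}$. Combining the two yields $[w]_{W^{s,p}(\R^n)} \geq [u]_{W^{s,p}(\R^n)}$.

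Next I would feed this into Clarkson. For $p \geq 2$, inequality \eqref{eq:clark} together with the hypothesis $[v]_{W^{s,p}(\R^n)} \leq [u]_{W^{s,p}(\R^n)}$ gives
\[
[u]_{W^{s,p}(\R^n)}^p \leq \left[\tfrac{u+v}{2}\right]_{W^{s,p}(\R^n)}^p + \left[\tfrac{u-v}{2}\right]_{W^{s,p}(\R^n)}^p \leq \tfrac12\bigl([u]_{W^{s,p}(\R^n)}^p + [v]_{W^{s,p}(\R^n)}^p\bigr) \leq [u]_{W^{s,p}(\R^n)}^p,
\]
which forces $\left[\tfrac{u-v}{2}\right]_{W^{s,p}(\R^n)} = 0$. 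The range $1<p\leq 2$ is identical, using \eqref{eq:clark2} with exponent $\tfrac{p}{p-1}$ in place of $p$ and $\bigl(\tfrac12[u]^p+\tfrac12[v]^p\bigr)^{1/(p-1)} \leq [u]^{p/(p-1)}$ on the right. Either way, $[u-v]_{W^{s,p}(\R^n)} = 0$.

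Finally, $[u-v]_{W^{s,p}(\R^n)} = 0$ means the integrand $\abs{(u-v)(x)-(u-v)(y)}^p\abs{x-y}^{-n-sp}$ vanishes a.e., so by Fubini $u-v$ is a.e. equal to a constant; by continuity of $u$ and $v$ it equals that constant everywhere, and evaluating at $x_0$ (where $u(x_0)=v(x_0)$) shows the constant is $0$. Hence $u=v$. I do not expect a real obstacle here: the only points needing care are verifying that $v$ has a continuous representative (which is exactly where the prescribed boundary values are used to upgrade ``a.e.\ constant'' to ``identically zero'') and correctly distinguishing the two cases of Clarkson's inequality according to whether $p\geq 2$ or $p<2$.
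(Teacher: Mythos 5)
Your proof is correct and follows essentially the same route as the paper: form the midpoint $w=\tfrac{u+v}{2}$, note it attains the extremal ratio at $x_0,y_0$, use Morrey's inequality together with the extremality of $u$ and the convexity/triangle inequality to pin down $[w]_{W^{s,p}}$, and then invoke the appropriate Clarkson inequality (split on $p\geq 2$ versus $1<p\leq2$) to force $[u-v]_{W^{s,p}}=0$, whence $u-v$ is constant and evaluation at $x_0$ gives $u=v$. The extra remarks you add about the continuous representative of $v$ and the Fubini argument for why vanishing Gagliardo seminorm implies $u-v$ is a.e.\ constant are not spelled out in the paper but are harmless and correct.
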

\begin{proof}
    Define $w := \frac{u+v}{2}$. Notice that
    \[
    w(x_0)=u(x_0) \quad \text{and} \quad w(y_0)= u(y_0).
    \]
    Hence, 
    \begin{equation}
        [w]_{C^{s-\frac{n}{p}}(\R^n)} \geq [u]_{C^{s-\frac{n}{p}}(\R^n)}.
    \end{equation}
    Using inequality \eqref{eq:morrey-ineq} and the fact that $u$ is an extremal we obtain
    \[
    \begin{aligned}
    C_\star [u]_{W^{s,p}(\R^n)}=& [u]_{C^{s-\frac{n}{p}}(\R^n)} \leq [w]_{C^{s-\frac{n}{p}}(\R^n)} \leq C_\star [w]_{W^{s,p}(\R^n)} \\
   \text{(using the triangle inequality)} \quad &\leq \frac{1}{2} C_\star \left( [u]_{W^{s,p}(\R^n)} +[v]_{W^{s,p}(\R^n)} \right) \leq C_{\star} [u]_{W^{s,p}(\R^n)}.
    \end{aligned}
    \]
    Therefore the inequalities above are all equalities and in particular 
    \begin{equation}\label{eq:pre-clark}
        [w]_{W^{s,p}(\R^n)}= [u]_{W^{s,p}(\R^n)}.
    \end{equation}
    Now if $p\geq 2$ we use Clarkson's first inequality \eqref{eq:clark} to obtain
    \[
     [w]_{W^{s,p}(\R^n)}^p + \Bigl[\frac{u-v}{2} \Bigr]_{W^{s,p}(\R^n)}^p \leq \frac{1}{2}\left(  [u]_{W^{s,p}(\R^n)}^p +  [v]_{W^{s,p}(\R^n)}^p \right) \leq  [u]_{W^{s,p}(\R^n)}^p .
    \]
    It is implied by \eqref{eq:pre-clark} that
    \[
    \Bigl[\frac{u-v}{2} \Bigr]_{W^{s,p}(\R^n)} =0.
    \]
    Which implies that $u-v=c$ for some constant $c$, due to the assumption $u(x_0)= v(x_0)$ we obtain $u=v$.

    In the case $1<p <2$, we use Clarkson's other inequality \eqref{eq:clark2} and we arrive at
    \[
    \left[ w \right]_{W^{s,p}(\R^n)}^{\frac{p}{p-1}} + \left[\frac{u-v}{2}\right]_{W^{s,p}(\R^n)}^{\frac{p}{p-1}} \leq \left( \frac{1}{2} [u]_{W^{s,p}(\R^n)}^p+ \frac{1}{2} [v]_{W^{s,p}(\R^n)}^p \right)^{\frac{1}{p-1}} \leq [u]_{W^{s,p}(\R^n)}^{\frac{p}{p-1}}.
    \]
    Using \eqref{eq:pre-clark} we obtain 
    \[
    \left[\frac{u-v}{2}\right]_{W^{s,p}(\R^n)}^{\frac{p}{p-1}}=0 ,
    \]
    which implies that $u-v$ is constant. Since $u(x_0)=v(x_0)$ we obtain $u=v$. 
\end{proof}
\begin{remark}\label{rmk:uniqueness}
    The above lemma implies a uniqueness property of the Morrey extremals. Given two distinct points $x_0\neq y_0 \in \R^n$ and values $a, b \in \R$, there exists a unique Morrey extremal $u \in \Dcal^{s,p}(\R^n)$ with the property $u(x_0)=a$, $u(y_0)= b$, and 
    $$[u]_{C^{s-\frac{n}{p}}((\R^n))}= \frac{\abs{u(x_0)-u(y_0)}}{\abs{x_0-y_0}^{s-\frac{n}{p}}}$$
\end{remark}

\section{Stability}\label{sec:4}
\begin{proposition}\label{prop:stability}
Suppose $v \in \Dcal^{s,p}(\R^n)$. then there is an extremal $u \in \Dcal^{s,p}(\R^n)$ such that
$$\bigl( \frac{C_\star}{2} \Bigr)^p [u-v]_{W^{s,p}(\R^n)}^p + [v]_{C^{s-\frac{n}{p}}(\R^n)}^p \leq C_\star^p [v]_{W^{s,p}(\R^n)}^p, $$
when $2\leq  p< \infty$ and 
\[
\Bigl( \frac{C_\star}{2}\Bigr)^{\frac{p}{p-1}} [u-v]_{W^{s,p}(\R^n)}^{\frac{p}{p-1}} + [v]_{C^{s-\frac{n}{p}}(\R^n)}^{\frac{p}{p-1}}  \leq C_\star^{\frac{p}{p-1}} [v]_{W^{s,p}(\R^n)}^{\frac{p}{p-1}},
\]
when $1< p \leq 2$.
\end{proposition}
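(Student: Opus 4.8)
The plan is to adapt the argument for \cite[Corollary 6.3]{Hynd}, combining the two Clarkson inequalities \eqref{eq:clark}, \eqref{eq:clark2} with the construction of an extremal having prescribed values at a pair of points. If $v$ is constant both sides vanish and one may take $u\equiv v$, so I assume $v$ is non-constant. By Proposition \ref{prop:holdersemi_achiev} there are $x_0\neq y_0$ with
\[
[v]_{C^{s-\frac{n}{p}}(\R^n)}=\frac{\abs{v(x_0)-v(y_0)}}{\abs{x_0-y_0}^{s-\frac{n}{p}}},
\]
and in particular $v(x_0)\neq v(y_0)$. Applying the construction of Remark \ref{rmk:rescaling-distinct-point} with $a=v(x_0)$ and $b=v(y_0)$ produces a Morrey extremal $u$ with $u(x_0)=v(x_0)$, $u(y_0)=v(y_0)$ and $[u]_{C^{s-\frac{n}{p}}(\R^n)}=\frac{\abs{v(x_0)-v(y_0)}}{\abs{x_0-y_0}^{s-\frac{n}{p}}}=[v]_{C^{s-\frac{n}{p}}(\R^n)}$. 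Since $u$ is an extremal, $C_\star[u]_{W^{s,p}(\R^n)}=[u]_{C^{s-\frac{n}{p}}(\R^n)}=[v]_{C^{s-\frac{n}{p}}(\R^n)}\leq C_\star[v]_{W^{s,p}(\R^n)}$, hence $[u]_{W^{s,p}(\R^n)}\leq[v]_{W^{s,p}(\R^n)}$; this comparison is what I feed into Clarkson's inequality.

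Next I set $w:=\frac{u+v}{2}$. Since $u$ and $v$ agree at $x_0$ and $y_0$, so does $w$, and therefore, using \eqref{eq:morrey-ineq},
\[
C_\star[w]_{W^{s,p}(\R^n)}\geq[w]_{C^{s-\frac{n}{p}}(\R^n)}\geq\frac{\abs{w(x_0)-w(y_0)}}{\abs{x_0-y_0}^{s-\frac{n}{p}}}=[v]_{C^{s-\frac{n}{p}}(\R^n)}.
\]
Raising to a power, $[w]_{W^{s,p}(\R^n)}^{q}\geq C_\star^{-q}[v]_{C^{s-\frac{n}{p}}(\R^n)}^{q}$ for any $q>0$, in particular for $q=p$ and $q=\frac{p}{p-1}$.

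For $p\geq 2$, Clarkson's inequality \eqref{eq:clark} applied to $u$ and $v$, together with $[u]_{W^{s,p}(\R^n)}\leq[v]_{W^{s,p}(\R^n)}$, gives
\[
[w]_{W^{s,p}(\R^n)}^p+\Bigl[\tfrac{u-v}{2}\Bigr]_{W^{s,p}(\R^n)}^p\leq\tfrac12\Bigl([u]_{W^{s,p}(\R^n)}^p+[v]_{W^{s,p}(\R^n)}^p\Bigr)\leq[v]_{W^{s,p}(\R^n)}^p.
\]
Inserting the lower bound $[w]_{W^{s,p}(\R^n)}^p\geq C_\star^{-p}[v]_{C^{s-\frac{n}{p}}(\R^n)}^p$, using $\bigl[\tfrac{u-v}{2}\bigr]_{W^{s,p}(\R^n)}=\tfrac12[u-v]_{W^{s,p}(\R^n)}$, and multiplying by $C_\star^p$ yields the first claimed estimate. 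For $1<p\leq 2$ the identical computation with \eqref{eq:clark2} in place of \eqref{eq:clark} and the exponent $\frac{p}{p-1}$ throughout gives the second.

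I do not expect a genuine obstacle here: the only points needing care are that the extremal $u$ from Remark \ref{rmk:rescaling-distinct-point} attains its H\"older seminorm exactly at $x_0,y_0$ (so that $[u]_{C^{s-\frac{n}{p}}(\R^n)}=[v]_{C^{s-\frac{n}{p}}(\R^n)}$, not merely $\leq$) and that $v(x_0)\neq v(y_0)$ so the construction is non-degenerate --- both follow from non-constancy of $v$ and Proposition \ref{prop:holdersemi_achiev}. Everything else is bookkeeping with the two Clarkson inequalities.
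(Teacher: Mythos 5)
Your proof is correct and takes essentially the same route as the paper: it constructs an extremal $u$ agreeing with $v$ at the two points where $v$ attains its H\"older seminorm (via Remark \ref{rmk:rescaling-distinct-point}, where the paper just says ``after a proper rescaling and rotation''), notes $[u]_{W^{s,p}}\leq [v]_{W^{s,p}}$ and that $\frac{u+v}{2}$ has the same H\"older seminorm as $v$, and feeds these into Clarkson's inequalities. The only cosmetic difference is that the paper first records the equality $\bigl[\tfrac{u+v}{2}\bigr]_{C^{s-n/p}}=[v]_{C^{s-n/p}}$ and then chains the estimates, while you apply Clarkson first and then substitute the lower bound on $[w]_{W^{s,p}}$; these are the same computation.
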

\begin{proof}
If $v$ is constant then it is a Morrey extremal itself, therefore we assume that $v$ is not constant. Using Proposition \ref{prop:holdersemi_achiev}, there exists $x_0 \neq y_0 \in \R^n$ such that
$$[v]_{C^{s-\frac{n}{p}}(\R^n)}=\frac{\abs{v(x_0)-v(y_0)}}{\abs{x_0-y_0}^{s-\frac{n}{p}}}.$$
After a proper rescaling and rotation we can select a Morrey extremal $u$ such that $u(x_0)=v(x_0),\; u(y_0)=v(y_0)$ and 
\[
[u]_{C^{s-\frac{n}{p}}(\R^n)}= [v]_{C^{s-\frac{n}{p}}(\R^n)}
\]
Since $u$ is an extremal we have
$$[u]_{W^{s,p}(\R^n)} \leq [v]_{W^{s,p}(\R^n)} .$$
Since $u(x_0)=v(x_0) \text{ and}\; u(y_0)=v(y_0)$, 
\[
[v]_{C^{s-\frac{n}{p}}(\R^n)} \leq \left[ \frac{u+v}{2} \right]_{C^{s-\frac{n}{p}}(\R^n)}.
\]
On the other hand, using the triangle inequality 
\[
\left[ \frac{u+v}{2} \right]_{C^{s-\frac{n}{p}}(\R^n)} \leq \frac{1}{2} \left( [u]_{C^{s-\frac{n}{p}}(\R^n)} + [v]_{C^{s-\frac{n}{p}}(\R^n)}  \right) = [v]_{C^{s-\frac{n}{p}}(\R^n)}.
\]
Therefore we arrive at 
\begin{equation}\label{eq:satib-hol}
    \left[ \frac{u+v}{2} \right]_{C^{s-\frac{n}{p}}(\R^n)} =[v]_{C^{s-\frac{n}{p}}(\R^n)}.
\end{equation}

\textbf{Case I.} Suppose that $2 < p < \infty$. Using \eqref{eq:satib-hol}, Morrey's inequality \eqref{eq:morrey-ineq}, and Clarkson's first inequality \eqref{eq:clark}
\[
\begin{aligned}
\Bigl( &\frac{C_\star}{2} \Bigr)^p [u-v]_{W^{s,p}(\R^n)}^p + [v]_{C^{s-\frac{n}{p}}(\R^n)}^p = C_\star^p\left[ \frac{u-v}{2} \right]_{W^{s,p}(\R^n)}^p + \left[ \frac{u+v}{2} \right]_{C^{s-\frac{n}{p}}(\R^n)}^p \\
&\leq C_\star^p\left[ \frac{u-v}{2} \right]_{W^{s,p}(\R^n)}^p + C_\star^p\left[ \frac{u+v}{2} \right]_{W^{s,p}(\R^n)}^p \\
&\leq C_\star^p \left( \frac{1}{2} [u]_{W^{s,p}(\R^n)}^p + \frac{1}{2} [v]_{W^{s,p}(\R^n)}^p  \right) \leq C_\star^p [v]_{W^{s,p}(\R^n)}^p .
\end{aligned}
\]

\textbf{Case II.} Now suppose that $1< p \leq 2 $. The argument is similar to Case I. The only difference is that we use Clarkson's second inequality \eqref{eq:clark2}. 
\[
\begin{aligned}
   \Bigl( &\frac{C_\star}{2}\Bigr)^{\frac{p}{p-1}} [u-v]_{W^{s,p}(\R^n)}^{\frac{p}{p-1}} + [v]_{C^{s-\frac{n}{p}}(\R^n)}^{\frac{p}{p-1}} 
   = C_\star^{\frac{p}{p-1}} \left[ \frac{u-v}{2} \right]_{W^{s,p}(\R^n)}^{\frac{p}{p-1}} + \left[ \frac{u+v}{2} \right]_{C^{s-\frac{n}{p}}(\R^n)}^{\frac{p}{p-1}} \\
   & \leq C_\star^{\frac{p}{p-1}} \left[ \frac{u-v}{2} \right]_{W^{s,p}(\R^n)}^{\frac{p}{p-1}} + C_\star^{\frac{p}{p-1}}  \left[ \frac{u+v}{2} \right]_{W^{s,p}(\R^n)}^{\frac{p}{p-1}} \\
   &\leq C_\star^{\frac{p}{p-1}} \left( \frac{1}{2} [u]_{W^{s,p}(\R^n)}^p + \frac{1}{2} [v]_{W^{s,p}(\R^n)}^p  \right)^{\frac{1}{p-1}} \leq C_\star^{\frac{p}{p-1}} [v]_{W^{s,p}(\R^n)}^\frac{p}{p-1}
\end{aligned}
\]
\end{proof}
\section{Symmetry of extremals}\label{sec:sym}
In this section, we use Lemma \ref{lm:eq-case-energy} to establish the symmetry properties of the Morrey extremals.

\begin{proposition}\label{prop:uniqueness}
    Suppose that $x_0\neq y_0 \in \R^n$ and $x_1\neq y_1 \in \R^n$, and assume that $u,v \in \Dcal^{s,p}(\R^n)$ are non-constant extremals with
    \[
    [u]_{C^{s-\frac{n}{p}}(\R^n)}= \frac{u(x_0)-u(y_0)}{\abs{x_0-y_0}^{s-\frac{n}{p}}} \quad \text{ and } [v]_{C^{s-\frac{n}{p}}(\R^n)}= \frac{v(x_1)-v(y_1)}{\abs{x_1-y_1}^{s-\frac{n}{p}}}.
    \]
    Then for each orthogonal transformation of $\R^n$ which satisfies 
    $$O \left( \frac{x_0-y_0}{\abs{x_0-y_0}} \right)= \frac{x_1-y_1}{\abs{x_1-y_1}} $$
    and every $x \in \R^n$ we have
    \[
    u(x)= \frac{u(x_0)-u(y_0)}{v(x_1)-v(y_1)}\cdot \left\lbrace v \Biggl( \frac{\abs{x_1 - y_1}}{\abs{x_0-y_0}} O(x-x_0) + x_1 \Biggr) - v(x_1) \right\rbrace + u(x_0) .
    \]
\end{proposition}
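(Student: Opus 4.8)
\emph{Proof proposal.} The plan is to recognize the right-hand side of the claimed identity as a Morrey extremal built from $v$ via the invariances listed in the introduction, to check that it coincides with $u$ at the two distinguished points, and then to invoke Lemma \ref{lm:eq-case-energy} (equivalently the uniqueness statement of Remark \ref{rmk:uniqueness}) to conclude that it equals $u$ everywhere.

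First I would introduce
\[
\mu := \frac{u(x_0)-u(y_0)}{v(x_1)-v(y_1)}, \qquad \lambda := \frac{\abs{x_1-y_1}}{\abs{x_0-y_0}},
\]
\[
w(x) := \mu\Bigl( v\bigl(\lambda\,O(x-x_0)+x_1\bigr) - v(x_1)\Bigr) + u(x_0).
\]
Since $u,v$ are non-constant extremals, the hypotheses $[u]_{C^{s-\frac{n}{p}}(\R^n)}=\frac{u(x_0)-u(y_0)}{\abs{x_0-y_0}^{s-\frac{n}{p}}}$ and $[v]_{C^{s-\frac{n}{p}}(\R^n)}=\frac{v(x_1)-v(y_1)}{\abs{x_1-y_1}^{s-\frac{n}{p}}}$ yield $u(x_0)>u(y_0)$ and $v(x_1)>v(y_1)$, so $\mu>0$ and $w$ is well-defined; moreover $w\in\Dcal^{s,p}(\R^n)$, being obtained from $v$ by a translation, an orthogonal transformation, a dilation, multiplication by the nonzero scalar $\mu$, and the addition of a constant. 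I would then record how these operations act on the two seminorms: translations, rotations and the additive constant change neither $[\,\cdot\,]_{C^{s-\frac{n}{p}}(\R^n)}$ nor $[\,\cdot\,]_{W^{s,p}(\R^n)}$; the dilation $x\mapsto\lambda x$ multiplies both by $\lambda^{s-\frac{n}{p}}$; and multiplying the function by $\mu$ multiplies both by $\mu$. Hence the ratio $[\,\cdot\,]_{W^{s,p}(\R^n)}/[\,\cdot\,]_{C^{s-\frac{n}{p}}(\R^n)}$ is preserved, so $w$ is again a Morrey extremal, with
\[
[w]_{C^{s-\frac{n}{p}}(\R^n)}=\mu\,\lambda^{s-\frac{n}{p}}\,[v]_{C^{s-\frac{n}{p}}(\R^n)}=\frac{u(x_0)-u(y_0)}{\abs{x_0-y_0}^{s-\frac{n}{p}}}=[u]_{C^{s-\frac{n}{p}}(\R^n)}.
\]

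Next I would evaluate $w$ at the two points. Taking $x=x_0$ makes the argument of $v$ equal to $x_1$, so $w(x_0)=u(x_0)$. Taking $x=y_0$ and using the assumption $O\bigl(\tfrac{x_0-y_0}{\abs{x_0-y_0}}\bigr)=\tfrac{x_1-y_1}{\abs{x_1-y_1}}$ gives $\lambda\,O(y_0-x_0)+x_1=-(x_1-y_1)+x_1=y_1$, whence $w(y_0)=\mu\bigl(v(y_1)-v(x_1)\bigr)+u(x_0)=-(u(x_0)-u(y_0))+u(x_0)=u(y_0)$. Since both $u$ and $w$ are extremals, equality of their $C^{s-\frac{n}{p}}$-seminorms forces equality of their $W^{s,p}$-seminorms, i.e.\ $[w]_{W^{s,p}(\R^n)}=[u]_{W^{s,p}(\R^n)}$.

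Finally, $w\in\Dcal^{s,p}(\R^n)$ satisfies $w(x_0)=u(x_0)$, $w(y_0)=u(y_0)$ and $[w]_{W^{s,p}(\R^n)}\le[u]_{W^{s,p}(\R^n)}$, while $u$ is a Morrey extremal whose H\"older seminorm is attained at $x_0,y_0$; so Lemma \ref{lm:eq-case-energy} applies and gives $u=w$, which is exactly the asserted formula. I do not expect any genuine obstacle here: all the real content is hidden in Lemma \ref{lm:eq-case-energy} (hence in Clarkson's inequality), and the only points requiring care are the bookkeeping of the homogeneity exponent $s-\frac{n}{p}$ under dilation and of the signs, both of which are pinned down by the fact that the extremal conditions are phrased with signed difference quotients.
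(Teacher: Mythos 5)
Your proposal is correct and follows essentially the same route as the paper: define the transformed function, check it matches $u$ at $x_0,y_0$, compute that its $W^{s,p}$-seminorm equals that of $u$ via the invariances of both seminorms, and then invoke Lemma \ref{lm:eq-case-energy}. The only cosmetic difference is that you pass through equality of H\"older seminorms and extremality of $w$ to deduce $[w]_{W^{s,p}}=[u]_{W^{s,p}}$, whereas the paper computes the $W^{s,p}$-seminorm of $\tilde u$ directly; both are equivalent bookkeeping.
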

\begin{proof}
    Let 
    $$\Tilde{u}(x):=\frac{u(x_0)-u(y_0)}{v(x_1)-v(y_1)}\cdot \left\lbrace v \Biggl( \frac{\abs{x_1 - y_1}}{\abs{x_0-y_0}} O(x-x_0) + x_1 \Biggr) - v(x_1) \right\rbrace + u(x_0) .$$
    This function is designed so that 
    \begin{equation}\label{eq:sym-equality-disting}
        \tilde{u}(x_0)= u(x_0) \quad \text{and } \quad \tilde{u}(y_0) = u(y_0). 
    \end{equation}
    In view of the invariances of the fractional Sobolev seminorm, we can compute
    \begin{equation}\label{sym-equality-seminorm}
    \begin{aligned}
   \relax [\tilde{u}]_{W^{s,p}(\R^n)}&= \frac{u(x_0)-u(y_0)}{v(x_1)-v(y_1)}\cdot \Biggl(\frac{\abs{x_1-y_1}}{\abs{x_0-y_0}}\Biggr)^{s-\frac{n}{p}}[v]_{W^{s,p}(\R^n)} \\
    &= \frac{[v]_{W^{s,p}(\R^n)}}{[v]_{C^{s-\frac{n}{p}}(\R^n)}}[u]_{C^{s-\frac{n}{p}}(\R^n)} \\
    &= \frac{[u]_{C^{s-\frac{n}{p}}(\R^n)}}{C_{\star}} 
     = [u]_{W^{s,p}(\R^n)} .
    \end{aligned}
    \end{equation}
    The equations \eqref{eq:sym-equality-disting} and \eqref{sym-equality-seminorm} allow us to use Lemma \ref{lm:eq-case-energy} which immediately implies 
    $$u(x)= \Tilde{u}(x).$$
\end{proof}
The following symmetry property of the extremals is a direct consequence of \ref{prop:uniqueness}.
\begin{corollary}\label{cor:symmetry}
Let $u \in \Dcal^{s,p}(\R^n)$ be a Morrey extremal with 
$$[u]_{C^{s-\frac{n}{p}}(\R^n)}= \frac{\abs{u(x_0)- u(y_0)}}{\abs{x_0-y_0}^{s-\frac{n}{p}}}  .$$
Then 
$$u(x) = u(O(x-x_0) + x_0), \quad x \in \R^n$$
for any orthogonal transformation $O$ which satisfies 
$$O(y_0-x_0)= y_0-x_0 .$$
\end{corollary}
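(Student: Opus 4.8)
The plan is to read off this corollary as an immediate special case of Proposition \ref{prop:uniqueness}, applied with $v = u$ and $(x_1,y_1) = (x_0,y_0)$. First I would dispose of the trivial case: if $u$ is constant then $u(x) = u(O(x-x_0)+x_0)$ for every $x$ regardless of $O$, so from now on I may assume $u$ is non-constant, whence $[u]_{C^{s-\frac{n}{p}}(\R^n)} > 0$ and in particular $u(x_0) \neq u(y_0)$.

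Proposition \ref{prop:uniqueness} is phrased in terms of the signed difference quotient $\frac{u(x_0)-u(y_0)}{|x_0-y_0|^{s-\frac{n}{p}}}$, while the hypothesis of the corollary only controls its absolute value. To reconcile the two I would, if necessary, replace $u$ by $-u$: since the $C^{s-\frac{n}{p}}$ and $W^{s,p}$ seminorms are invariant under $u \mapsto -u$, the function $-u$ is again a Morrey extremal satisfying the same hypothesis at the same points, and the conclusion $u(\cdot) = u\bigl(O(\cdot - x_0)+x_0\bigr)$ is clearly insensitive to this sign change. Hence I may assume $u(x_0) > u(y_0)$, so that $[u]_{C^{s-\frac{n}{p}}(\R^n)} = \frac{u(x_0)-u(y_0)}{|x_0-y_0|^{s-\frac{n}{p}}}$ with no absolute value.

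It then remains to check the hypotheses of Proposition \ref{prop:uniqueness} with $v := u$, $x_1 := x_0$, $y_1 := y_0$. The required condition on the orthogonal map reads $O\bigl(\frac{x_0-y_0}{|x_0-y_0|}\bigr) = \frac{x_1-y_1}{|x_1-y_1|} = \frac{x_0-y_0}{|x_0-y_0|}$, which is exactly equivalent to the assumption $O(y_0-x_0) = y_0-x_0$. Substituting into the identity produced by the proposition, the scalar prefactor $\frac{u(x_0)-u(y_0)}{v(x_1)-v(y_1)}$ and the dilation factor $\frac{|x_1-y_1|}{|x_0-y_0|}$ are both equal to $1$, so the right-hand side collapses to $u\bigl(O(x-x_0)+x_0\bigr) - u(x_0) + u(x_0) = u\bigl(O(x-x_0)+x_0\bigr)$, which is precisely the assertion.

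Essentially all the substance sits in Proposition \ref{prop:uniqueness}, which in turn rests on the uniqueness statement Lemma \ref{lm:eq-case-energy} and thus on Clarkson's inequality; the only points needing attention in the present deduction are the trivial constant case and the reduction to $-u$ to remove the absolute value, and neither is a genuine obstacle.
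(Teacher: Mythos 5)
Your proof is correct and follows exactly the route the paper intends: the paper offers no written argument beyond the remark that the corollary is ``a direct consequence of Proposition~\ref{prop:uniqueness},'' and you have filled in that deduction by taking $v=u$, $(x_1,y_1)=(x_0,y_0)$ and observing that all prefactors collapse to $1$. The two small points you flag---disposing of the constant case, and removing the absolute value by passing to $-u$ (equivalently one could swap $x_0$ and $y_0$, which thanks to $O(y_0-x_0)=y_0-x_0$ yields the same identity)---are precisely the details one needs to make the ``direct consequence'' rigorous.
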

The Morrey extremals also enjoy an anti-symmetry property as described in the next proposition.
\begin{proposition}\label{cor:antisymmetry}
    Let $u \in \Dcal^{s,p}(\R^n)$ be a non-constant Morrey extremal with 
$$[u]_{C^{s-\frac{n}{p}}(\R^n)}= \frac{\abs{u(x_0)- u(y_0)}}{\abs{x_0-y_0}^{s-\frac{n}{p}}}  .$$
Then 
\[
u\left( x- 2\frac{(x-\frac{1}{2}(x_0+y_0))\cdot(x_0-y_0) }{\abs{x_0-y_0}^2}(x_0-y_0) \right) -\left(u(x_0) + u(y_0)\right) = -u(x) 
\]
\end{proposition}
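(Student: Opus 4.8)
The plan is to deduce the identity from the uniqueness Lemma \ref{lm:eq-case-energy}. Set $m := \frac{1}{2}(x_0+y_0)$ and let $\sigma \colon \R^n \to \R^n$ be the affine map
\[
\sigma(x) := x - 2\,\frac{\bigl(x-m\bigr)\cdot(x_0-y_0)}{\abs{x_0-y_0}^2}\,(x_0-y_0),
\]
which is the reflection across the hyperplane through $m$ orthogonal to $x_0-y_0$. Since $x_0 - m = \frac{1}{2}(x_0-y_0)$, a direct computation gives $\sigma(x_0) = y_0$ and $\sigma(y_0) = x_0$, and also $\sigma\circ\sigma = \id$. Moreover $\sigma$ is an isometry of $\R^n$ — a composition of a translation, an orthogonal transformation, and a translation — so composition with $\sigma$ preserves the class $\Dcal^{s,p}(\R^n)$ as well as both seminorms $[\,\cdot\,]_{W^{s,p}(\R^n)}$ and $[\,\cdot\,]_{C^{s-\frac{n}{p}}(\R^n)}$, by the invariance properties listed in the introduction.

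Next I would introduce the candidate function $v(x) := u(x_0) + u(y_0) - u(\sigma(x))$ and verify the three hypotheses of Lemma \ref{lm:eq-case-energy} for the pair $u$, $v$. First, $v \in \Dcal^{s,p}(\R^n)$, because $u \in \Dcal^{s,p}(\R^n)$ and this class is stable under composition with an affine isometry, change of sign, and addition of a constant. Second, using $\sigma(x_0)=y_0$ and $\sigma(y_0)=x_0$,
\[
v(x_0) = u(x_0)+u(y_0)-u(y_0) = u(x_0), \qquad v(y_0) = u(x_0)+u(y_0)-u(x_0) = u(y_0).
\]
Third, the invariance of $[\,\cdot\,]_{W^{s,p}(\R^n)}$ under translations, orthogonal maps, sign change, and addition of constants gives $[v]_{W^{s,p}(\R^n)} = [u\circ\sigma]_{W^{s,p}(\R^n)} = [u]_{W^{s,p}(\R^n)}$, so in particular $[v]_{W^{s,p}(\R^n)} \le [u]_{W^{s,p}(\R^n)}$.

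With these facts established, Lemma \ref{lm:eq-case-energy} (whose hypothesis $[u]_{C^{s-\frac{n}{p}}(\R^n)} = \abs{u(x_0)-u(y_0)}/\abs{x_0-y_0}^{s-\frac{n}{p}}$ is exactly what is assumed here) yields $v = u$ on $\R^n$, that is $u(x_0) + u(y_0) - u(\sigma(x)) = u(x)$ for every $x$, which rearranges to the asserted identity $u(\sigma(x)) - (u(x_0)+u(y_0)) = -u(x)$. The proof is thus essentially bookkeeping: all of the analytic content sits in Lemma \ref{lm:eq-case-energy}. The only step requiring a little care is confirming that the displayed affine map is genuinely the reflection interchanging $x_0$ and $y_0$ (so that the boundary-value conditions of the lemma hold); there is no real obstacle beyond that.
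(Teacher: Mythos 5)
Your argument is correct and is essentially the same as the paper's: both introduce the candidate $v(x) = u(x_0)+u(y_0)-u(\sigma(x))$, check that composition with the reflection $\sigma$ (and sign flip, constant shift) preserves $[\,\cdot\,]_{W^{s,p}(\R^n)}$, verify that $v$ matches $u$ at $x_0$ and $y_0$, and then invoke Lemma \ref{lm:eq-case-energy}. A small remark: your verification that $v(y_0)=u(y_0)$ is stated correctly, whereas the paper's displayed line has a typo ($v(y_0)=u(x_0)$); your bookkeeping is the right one.
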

\begin{proof}
    Let
    $$v:=\left(u(x_0) + u(y_0)\right) - u\left( x- 2\frac{(x-\frac{1}{2}(x_0+y_0))\cdot(x_0-y_0) }{\abs{x_0-y_0}^2}(x_0-y_0) \right) . $$
    Since the map 
    $$x \to  x- 2\frac{(x-\frac{1}{2}(x_0+y_0))\cdot(x_0-y_0) }{\abs{x_0-y_0}^2}(x_0-y_0) $$
    is a composition of an orthogonal transformation and a translation, it is implied that 
    \[
    [v]_{W^{s,p}(\R^n)} = [u]_{W^{s,p}(\R^n)}. 
    \]
    On the other hand, by design we have
    $$v(x_0)=u(x_0) \quad \text{and} \quad v(y_0)=u(x_0).$$
    Hence, Lemma \ref{lm:eq-case-energy} implies that $u=v$.
\end{proof}
To illustrate that this is an anti-symmetry property of the extremals it is convenient to perform scaling, rotation, and translation so that $x_0=\mathrm{e}_n$, $y_0=-\mathrm{e}_n$, and $u(\mathrm{e}_n)=-u(-\mathrm{e}_n)$. Then Proposition \ref{cor:antisymmetry} states that $u$ is anti-symmetric with respect to the hyperplane $\Pi = \{ x= (x^\prime,x_n) \in \R^n :  x_n=0 \}$.

We close this section by showing that the extremal values of a Morrey extremal are achieved at the points where the H\"older seminorm is maximized.

\begin{proposition}\label{prop:pointwise-bound}
    Let $u \in \Dcal^{s,p}(\R^n)$ be a non-constant Morrey extremal with 
$$[u]_{C^{s-\frac{n}{p}}(\R^n)}= \frac{u(x_0)- u(y_0)}{\abs{x_0-y_0}^{s-\frac{n}{p}}}.$$
Then $u$ achieves its maximum and minimum at $x_0$ and $y_0$. Furthermore, we have the following strict inequality.
\[
u(y_0) < u(x) < u(x_0), \qquad \text{ for } x \in \R^n\setminus \{ x_0,y_0 \}.
\]
\end{proposition}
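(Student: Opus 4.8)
\emph{Plan.} The idea is to obtain the two–sided bound by a truncation argument combined with the uniqueness Lemma \ref{lm:eq-case-energy}, and then to upgrade it to the strict inequality using the strong maximum principle, Proposition \ref{prop:strong-maximum-principle}. First note that, since $u$ is non-constant, $[u]_{C^{s-\frac{n}{p}}(\R^n)}>0$, hence
$$
u(x_0)-u(y_0) = [u]_{C^{s-\frac{n}{p}}(\R^n)}\,\abs{x_0-y_0}^{s-\frac{n}{p}} > 0 ,
$$
so $u(y_0)<u(x_0)$; write $M:=u(x_0)$ and $m:=u(y_0)$.

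\emph{Step 1: the two-sided bound.} I would set $\tilde u := \min\{u,M\}$. Since $t\mapsto\min\{t,M\}$ is $1$-Lipschitz, $\abs{\tilde u(x)-\tilde u(y)}\leq\abs{u(x)-u(y)}$ for all $x,y$, so $[\tilde u]_{W^{s,p}(\R^n)}\leq[u]_{W^{s,p}(\R^n)}<\infty$ and $\tilde u\in\Dcal^{s,p}(\R^n)$. Moreover $\tilde u(x_0)=\min\{M,M\}=u(x_0)$, and because $m<M$ also $\tilde u(y_0)=\min\{m,M\}=u(y_0)$. Thus $\tilde u$ satisfies all the hypotheses of Lemma \ref{lm:eq-case-energy} relative to the extremal $u$ (which attains its Hölder seminorm at $x_0,y_0$), and we conclude $u=\tilde u$, i.e.\ $u\leq M=u(x_0)$ on $\R^n$. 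Applying the same reasoning to $\hat u:=\max\{u,m\}$ (here $\hat u(x_0)=M$ since $M>m$, and $\hat u(y_0)=m$) gives $u\geq m=u(y_0)$. Hence $u(y_0)\leq u(x)\leq u(x_0)$ for all $x\in\R^n$, and the maximum and minimum of $u$ over $\R^n$ are attained at $x_0$ and $y_0$ respectively.

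\emph{Step 2: strictness.} By Lemma \ref{lm:equation}, $u$ is $(s,p)$-harmonic in $\R^n\setminus\{x_0,y_0\}$, in particular both $(s,p)$-subharmonic and $(s,p)$-superharmonic there. If $u(z)=u(x_0)$ for some $z\neq x_0$, then since $u(y_0)<u(x_0)=\sup_{\R^n}u$ necessarily $z\neq y_0$; thus the continuous, non-constant function $u$ attains its supremum over $\R^n$ at a point of the open set $\R^n\setminus\{x_0,y_0\}$ on which it is $(s,p)$-subharmonic, contradicting Proposition \ref{prop:strong-maximum-principle}. Hence $u(x)<u(x_0)$ for every $x\neq x_0$, and symmetrically, using the superharmonicity of $u$ on $\R^n\setminus\{x_0,y_0\}$ and the first part of Proposition \ref{prop:strong-maximum-principle}, $u(x)>u(y_0)$ for every $x\neq y_0$. (Alternatively, once $u\leq u(x_0)$ is known the strict lower bound follows from the anti-symmetry of $u$ about $x_0,y_0$ established in Proposition \ref{cor:antisymmetry}.)

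\emph{Main obstacle.} There is no deep analytic difficulty here; the only point requiring care is checking that both truncations $\min\{u,u(x_0)\}$ and $\max\{u,u(y_0)\}$ still coincide with $u$ at \emph{both} $x_0$ and $y_0$ — which is precisely where the strict inequality $u(x_0)>u(y_0)$ (coming from non-constancy) enters — and that they lie in $\Dcal^{s,p}(\R^n)$ with no larger Gagliardo seminorm. Once this is in place the statement is immediate from Lemma \ref{lm:eq-case-energy} and Proposition \ref{prop:strong-maximum-principle}; in particular this route avoids having to invoke a comparison principle on the unbounded domain $\R^n\setminus\{x_0,y_0\}$.
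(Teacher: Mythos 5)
Your proposal is correct and follows essentially the same route as the paper: normalize (or, as you do, work directly in the general setting), truncate $u$ from above and below at its values at $x_0,y_0$, observe that the $1$-Lipschitz truncation does not increase the Gagliardo seminorm and fixes the two prescribed values, invoke the uniqueness Lemma~\ref{lm:eq-case-energy} to conclude $u$ equals its truncations, and then use $(s,p)$-harmonicity on $\R^n\setminus\{x_0,y_0\}$ together with Proposition~\ref{prop:strong-maximum-principle} to upgrade the weak inequalities to strict ones. The only cosmetic difference is that the paper first rescales so that $x_0=\mathrm{e}_n$, $y_0=-\mathrm{e}_n$, $u(\mathrm{e}_n)=-u(-\mathrm{e}_n)=1$ and truncates at $\pm 1$, whereas you keep $M=u(x_0)$, $m=u(y_0)$ explicit; your remark that $u(x_0)>u(y_0)$ (needed so that $\min\{u,M\}$ and $\max\{u,m\}$ really fix both endpoint values) is a useful clarification of a point the paper handles implicitly via the normalization.
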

\begin{proof}
   Without loss of generality we can assume that $x_0= \mathrm{e}_n$, $y_0=-\mathrm{e}_n$ and  $u(\mathrm{e}_n)=-u(-\mathrm{e}_n) = 1$. 

    Now consider $w(x)= \min \{ u(x),1\}$. Using the pointwise inequality 
    \[
    \absb{\min \{ a,1\} - \min \{ b,1\} } \leq \abs{a-b},
    \]
    we arrive at 
    \begin{equation}\label{eq:cutoff-inequality}
        \abs{w(x)-w(y)} \leq \abs{u(x)-u(y)}.
    \end{equation}
    Hence,
    \[
      \iint_{\R^n \times \R^n} \frac{\abs{w(x) - w(y)}^p}{\abs{x-y}^{n+sp}} \dd x \dd y \leq  \iint_{\R^n \times \R^n} \frac{\abs{u(x) - u(y)}^p}{\abs{x-y}^{n+sp}} \dd x \dd y .
    \]
    As we know that
    $$w(\mathrm{e}_n)=u(\mathrm{e}_n) \quad \text{and } \quad w(-\mathrm{e}_n)= u(-\mathrm{e}_n),$$
    by appealing to Lemma \ref{lm:eq-case-energy} we conclude that $w$ is a Morrey extremal itself and we have $w=u$. 
    Therefore, we have established that 
    $$u(x) \leq 1, \qquad \text{for } x \in \R^n. $$
    In a similar fashion, we can argue that 
    $$u(x) \geq -1 \qquad \text{for } x \in \R^n.$$
    Finally, since $u$ is $(s,p)$-harmonic in $\R^n \setminus \lbrace \mathrm{e}_n, -\mathrm{e}_n \rbrace$, using the strong maximum principle \ref{prop:strong-maximum-principle}, the maximum, and the minimum can only be achieved in $\mathrm{e}_n$ and $-\mathrm{e}_n$.
\end{proof}

\section{Limit at infinity}\label{sec:6}
 In this section, we show that the Morrey extremals have a limit at infinity in dimensions greater than or equal to 2. Our proof is inspired by an argument of Bj\"orn in \cite{Bj} concerning the continuity of Perron solutions of certain boundary values with jump discontinuity.
\begin{theorem}\label{thm:limit-at-infty}
    Let $n \geq 2$. Suppose that $u$ is an extremal with
    $$[u]_{C^{s-\frac{n}{p}}(\R^n)}= \frac{\abs{u(x_0) - u(y_0)}}{\abs{x_0-y_0}^{s-\frac{n}{p}}}.$$
    Then 
    \[
    \lim_{x \to \infty} u(x) = \frac{1}{2} (u(x_0) + u(y_0)).
    \]
\end{theorem}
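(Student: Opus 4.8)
The plan is a blow-up argument at infinity that exploits the anti-symmetry of the extremal to kill the limit profile, together with the pointwise evaluation at an extremum from Lemma \ref{lm:viscosity-evaluation}. First I would use the invariances from the introduction to reduce to $x_0=\mathrm{e}_n$, $y_0=-\mathrm{e}_n$ and $u(\mathrm{e}_n)=-u(-\mathrm{e}_n)=1$, so the claim becomes $\lim_{x\to\infty}u(x)=0$ (undoing the normalization at the end produces the value $\tfrac12(u(x_0)+u(y_0))$). In this normalization $\norm{u}_{L^\infty(\R^n)}\le 1$ by Proposition \ref{prop:pointwise-bound}, $u$ is $(s,p)$-harmonic in $\R^n\setminus\{\pm\mathrm{e}_n\}$ by Lemma \ref{lm:equation}, and $u$ is anti-symmetric across $\Pi=\{x_n=0\}$ by Proposition \ref{cor:antisymmetry}; in particular $u\equiv 0$ on $\Pi$ and $\liminf_{|x|\to\infty}u(x)=-\limsup_{|x|\to\infty}u(x)$, so it suffices to prove $\limsup_{|x|\to\infty}u(x)\le 0$.

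Suppose, for contradiction, that $\ell:=\limsup_{|x|\to\infty}u(x)>0$. Choose $\xi_j$ with $R_j:=|\xi_j|\to\infty$ and $u(\xi_j)\to\ell$, and, after passing to a subsequence, $\xi_j/R_j\to e$ with $\abs{e}=1$. Set $v_j(x):=u(R_jx)$; by scale covariance of $(-\Delta_p)^s$ each $v_j$ is $(s,p)$-harmonic in $\R^n\setminus\{\pm R_j^{-1}\mathrm{e}_n\}$ and $\norm{v_j}_{L^\infty(\R^n)}\le 1$. On each compact $K\subset\R^n\setminus\{0\}$ the $v_j$ are $(s,p)$-harmonic for $j$ large, and Theorem \ref{thm:uniform-holder} bounds $[v_j]_{C^\alpha(K)}$ uniformly in $j$ --- here it is essential to use this scale-invariant estimate rather than the homogeneous $C^{s-\frac{n}{p}}$ seminorm, which under rescaling gets multiplied by $R_j^{\,s-\frac{n}{p}}\to\infty$. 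By Arzel\`a--Ascoli and a diagonal extraction a further subsequence satisfies $v_j\to v$ locally uniformly in $\R^n\setminus\{0\}$, hence a.e.\ in $\R^n$. Then $\norm{v}_{L^\infty(\R^n)}\le 1$; by the local stability of $(s,p)$-harmonicity under such limits (recalled after Proposition \ref{prop:comparison}) $v$ is $(s,p)$-harmonic in $\R^n\setminus\{0\}$; and letting $j\to\infty$ in $v_j(x',-x_n)=-v_j(x',x_n)$ shows $v$ is anti-symmetric across $\Pi$, so $v\equiv 0$ on $\Pi$. Moreover, for fixed $x\ne 0$ one has $|R_jx|\to\infty$, hence $v(x)=\lim_j u(R_jx)\le\ell$, while $v(e)=\lim_j v_j(\xi_j/R_j)=\lim_j u(\xi_j)=\ell$, since $v_j\to v$ uniformly on a neighbourhood of $e$ in $\R^n\setminus\{0\}$ and $\xi_j/R_j\to e$. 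In particular $e\notin\Pi$ and $e\ne 0$, because $v(e)=\ell>0$.

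Thus $v$ attains its global maximum $\ell$ at the point $e\ne 0$, where $v$ is $(s,p)$-harmonic; so $-v$ is $(s,p)$-superharmonic on a small ball about $e$ with a local minimum at $e$, and Lemma \ref{lm:viscosity-evaluation} (valid for all $1<p<\infty$ by the remark following it) gives
\[
\int_{\R^n}\frac{J_p\bigl(v(y)-v(e)\bigr)}{\abs{e-y}^{n+sp}}\dd y\ \ge\ 0 .
\]
Since $v(y)-v(e)=v(y)-\ell\le 0$ for a.e.\ $y$, the integrand is $\le 0$ a.e., so it vanishes a.e.; hence $v\equiv\ell$ a.e.\ in $\R^n$, and by continuity $v\equiv\ell$ on $\R^n\setminus\{0\}$, contradicting $v\equiv 0$ on $\Pi\setminus\{0\}$ (recall $\ell>0$). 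Therefore $\limsup_{|x|\to\infty}u(x)\le 0$, so $\lim_{x\to\infty}u(x)=0$, and undoing the normalization yields $\lim_{x\to\infty}u(x)=\tfrac12(u(x_0)+u(y_0))$.

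The main obstacle is the blow-up compactness and the identification of the limit profile: one must avoid the scale-incompatible $C^{s-\frac{n}{p}}$ bound and instead combine the uniform interior H\"older estimate (Theorem \ref{thm:uniform-holder}) with the stability of $(s,p)$-harmonicity, so that $v$ is a genuine bounded $(s,p)$-harmonic function on the punctured space whose only possible singularity sits on the zero set $\Pi$ of $v$. This is precisely where anti-symmetry replaces Serrin's singularity analysis from the local case: it excludes a nontrivial bounded singular profile, which we cannot control directly in the fractional setting.
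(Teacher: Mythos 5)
Your proof is correct, and it takes a genuinely different and substantially shorter route than the paper's. Both arguments share the same skeleton: normalize so $x_0=\mathrm{e}_n$, $y_0=-\mathrm{e}_n$, $u(\mathrm{e}_n)=-u(-\mathrm{e}_n)=1$; blow up via $x\mapsto u(tx)$; use the scale-invariant interior H\"older estimate of Theorem \ref{thm:uniform-holder} (not the homogeneous $C^{s-n/p}$ bound, which does not survive rescaling) together with the stability of viscosity $(s,p)$-harmonicity to extract a locally uniform limit on $\R^n\setminus\{0\}$; and exploit anti-symmetry to know the limit vanishes on $\Pi\setminus\{0\}$. Where the two proofs part ways is in how the limit profile is killed. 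The paper tries to show that \emph{every} subsequential blow-up limit $v_\infty$ is identically zero; to do so it must first prove that $v_\infty$ extends continuously across the origin, and this is achieved through Perron solutions in $B_1$ minus a half-line, the explicit barrier $|x|^{(sp-n)/(p-1)}$ (Proposition \ref{prop:barrier}, Lemma \ref{lm:barrier}), a monotonicity argument giving a zero-homogeneous radial limit, a rigidity argument for that limit (itself invoking Lemma \ref{lm:viscosity-evaluation}), and finally Liouville. You sidestep all of this by blowing up along a sequence $\xi_j$ that realizes $\ell=\limsup_{|x|\to\infty}u(x)$: then the limit $v$ attains its supremum $\ell$ over $\R^n\setminus\{0\}$ at the explicit point $e=\lim\xi_j/|\xi_j|$ on the unit sphere, where $v$ is genuinely $(s,p)$-harmonic, so Lemma \ref{lm:viscosity-evaluation} (equivalently Proposition \ref{prop:strong-maximum-principle}) applies directly and forces $v\equiv\ell$, contradicting $v=0$ on $\Pi\setminus\{0\}$ when $\ell>0$. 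The origin, and hence the entire removability/boundary analysis that drives the paper's Steps 1--4, never needs to be touched because the extremum is pinned away from the singular point by construction. Two minor points: since $sp>n\ge2$ and $s<1$ one automatically has $p>\tfrac{2}{2-s}$, so the remark extending Lemma \ref{lm:viscosity-evaluation} to small $p$ is not actually needed here; and your use of anti-symmetry to convert $\limsup\le0$ into the full two-sided limit is exactly the right closing move, replacing the final subsequence argument in the paper.
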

 Before starting the proof we need some properties of Perron solutions.
\subsection{Perron solutions}
Here we recall the definition and some properties of Perron solutions. We follow the same definition as in \cite{KKP}. The definition of $(s,p)$-superharmonic functions used in \cite{KKP} is based on comparison, while we use the viscosity solutions here. See Definitions \ref{def:viscosity1} and \ref{def:viscosity2}. In light of \cite[Theorem 1.1]{KKL} these two notions of solutions are equivalent.
\begin{definition}
  Let $\Omega \subset \R^n$ be an open set and assume that $g \in L_{sp}^{p-1}(\R^n)$. We define the upper Perron class of $g$, $\Ucal_g$ to be the set of all functions $v:\R^n \to [-\infty, \infty]$ such that

$(i)$ $v$ is $(s,p)$-superharmonic in $\Omega$,

$(ii)$ $v$ is bounded from below in $\Omega$,

$(iii)$ $\underset{\Omega \ni y \to x}{\liminf }\; v(y) \geq \underset{ \R^n \setminus \Omega \ni y \to x}{\limsup } g(y)$ for all $x \in \partial \Omega$,

$(iv)$ $v = g$ almost everywhere in $\R^n \setminus \Omega$.

   Furthermore, define the upper Perron solution of the complementary value $g$ to be
   $$\overline{P}g(x):= \inf_{v \in \Ucal_g} v(x).$$
\end{definition}
\begin{definition}
   Let $\Omega \subset \R^n$ be an open set and assume that $g \in L_{sp}^{p-1}(\R^n)$. We define the lower Perron class of $g$, $\Lcal_g$ to be the set of all functions $v:\R^n \to [-\infty, \infty]$ such that

$(i)$ $v$ is $(s,p)$-subharmonic in $\Omega$,

$(ii)$ $v$ is bounded from above in $\Omega$,

$(iii)$ $\underset{\Omega \ni y \to x}{\limsup}\; v(y) \leq \underset{ \R^n \setminus \Omega \ni y \to x}{\liminf} g(y)$ for all $x \in \partial \Omega$,

$(iv)$ $v = g$ almost everywhere in $\R^n \setminus \Omega$.

Furthermore, define the lower Perron solution of the complementary value $g$ to be
   $$\underline{P}g(x):= \sup_{v \in \Lcal_g} v(x).$$
\end{definition}
It follows from the comparison principle that 
$$-\infty \leq \underline{P}g \leq \overline{P}g \leq \infty $$
If we consider a bounded complement value $M_1  \leq g\leq M_2$, we have $M_1 \chi_{\Omega} + g \chi_{\R^n \setminus \Omega} \in \Lcal_g$ and $M_2\chi_{\Omega} + g \chi_{\R^n \setminus \Omega} \in \Ucal_g $, see \cite[Lemma 18]{KKP}. Hence, the classes $\Lcal_g$ and $\Ucal_g$ are nonempty and
$$M_1 \leq \underline{P}g \leq \overline{P}g \leq M_2. $$
Furthermore, for all bounded complementary values, the upper and lower Perron solutions are $(s,p)$-harmonic in $\Omega$. See \cite[Theorem 2]{KKP} as well as \cite[Theorem 22.]{LL}
Now we discuss the boundary behaviour of Perron solutions. The notion of a barrier is a classical tool to investigate whether boundary values are achieved for continuous functions. Here is the definition in the nonlocal setting.
\begin{definition}
    We say that a function $w$ is a barrier at $x_0 \in \partial \Omega$ if 

    $(i)$ $w$ is continuous in $\R^n$

    $(ii)$ $w$ is $(s,p)$-superharmonic in $\Omega$

    $(iii)$ $w(x) >0$ if $x\neq x_0$ and 
    $$\liminf_{\abs{x} \to \infty} w(x) >0.$$

    $(iv)$  $w(x_0)=0$
\end{definition}
The proof of the following proposition for continuous complement values in the whole $\R^n$ can be found in \cite[Lemma 17, Proposition 24, and Theorem 26]{LL}. An inspection of the proof reveals that the continuity assumption is only needed at the point $x_0$. 
\begin{proposition}
    Let $f:\Omega^c \to \R$ be a bounded function. Assume that $f$ is continuous at $x_0 \in \partial \Omega$, where $x_0$ admits a barrier, then 
    $$\lim_{\Omega \ni x \to x_0}\overline{P}f(x)= \lim_{\Omega \ni x \to x_0}\underline{P}f(x) = f(x_0).$$
\end{proposition}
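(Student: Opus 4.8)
The plan is to normalize the extremal, realize it as a Perron solution over an \emph{exterior domain from which the symmetry hyperplane has been removed}, and then read off the limit at infinity from the boundary behaviour of Perron solutions at a regular point; the anti-symmetry enters only through the complementary datum on that hyperplane. First I would use the invariances of the two seminorms (translation, rotation, dilation, multiplication by and addition of a constant; cf.\ Remark~\ref{rmk:rescaling-distinct-point}) to reduce to $x_0=\mathrm{e}_n$, $y_0=-\mathrm{e}_n$ and $u(\mathrm{e}_n)=-u(-\mathrm{e}_n)=1$. Only the additive transformation, with constant $\tfrac12\bigl(u(x_0)+u(y_0)\bigr)$, affects the value of the limit, so it suffices to prove $\lim_{x\to\infty}u(x)=0$ in this normalization. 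With it in force, Proposition~\ref{cor:antisymmetry} gives that $u$ is anti-symmetric across $\Pi:=\{x\in\R^n:x_n=0\}$, hence $u\equiv 0$ on $\Pi$; Proposition~\ref{prop:pointwise-bound} gives $\abs{u}\le 1$ on $\R^n$; and Lemma~\ref{lm:equation} gives that $u$ is $(s,p)$-harmonic in $\R^n\setminus\{\mathrm{e}_n,-\mathrm{e}_n\}$.

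Next, fix $R>1$, put $\Omega:=\R^n\setminus\bigl(\Pi\cup\overline{B(0,R)}\bigr)$, and let $g$ be the restriction of $u$ to $\R^n\setminus\Omega=\Pi\cup\overline{B(0,R)}$. Since $n\ge 2$, the hyperplane $\Pi$ is unbounded, so $\R^n\setminus\Omega$ reaches infinity, and $g\equiv 0$ on $\Pi\setminus\overline{B(0,R)}$; thus $g$ is bounded, continuous on $\R^n\setminus\Omega$, and continuous at the boundary point $\infty$ with $g(\infty)=0$. (This is the only place $n\ge 2$ is used: for $n=1$ one has $\Pi=\{0\}\subset\overline{B(0,R)}$, the construction is empty, and this is exactly why the one-dimensional case is left open.) Because $\{\mathrm{e}_n,-\mathrm{e}_n\}\subset\overline{B(0,R)}$, the function $u$ is $(s,p)$-harmonic in $\Omega$; it is also bounded, continuous on $\R^n$, equal to $g$ on $\R^n\setminus\Omega$, and by continuity $\liminf_{\Omega\ni y\to x}u(y)=u(x)=\limsup_{\R^n\setminus\Omega\ni y\to x}g(y)$ at every finite $x\in\partial\Omega$. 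Hence $u$ lies in both Perron classes $\Ucal_g$ and $\Lcal_g$, so $\overline{P}g\le u\le\underline{P}g\le\overline{P}g$ and
\[
u=\overline{P}g=\underline{P}g\qquad\text{in }\Omega.
\]

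Granting that $\infty$ is a regular boundary point of $\Omega$ — equivalently, that it admits a barrier — the version at the point $\infty$ of the Proposition on attainment of boundary values by Perron solutions at barrier points (recalled in this section), applied to the bounded datum $g$ which is continuous at $\infty$ with $g(\infty)=0$, gives $\lim_{\Omega\ni x\to\infty}\overline{P}g(x)=\lim_{\Omega\ni x\to\infty}\underline{P}g(x)=0$, that is $\lim_{\Omega\ni x\to\infty}u(x)=0$. Since moreover $u\equiv 0$ on $\Pi\setminus\overline{B(0,R)}$, and every point of large norm lies in $\Omega$ or on $\Pi$, this upgrades to $\lim_{x\to\infty}u(x)=0$; undoing the normalization turns this into $\lim_{x\to\infty}u(x)=\tfrac12\bigl(u(x_0)+u(y_0)\bigr)$.

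The crux — and the step I expect to cause the most work — is the existence of a barrier at infinity for $\Omega$. Heuristically it should hold for $n\ge 2$ because the removed hyperplane is ``thick at infinity'': since $sp>n$, the $(s,p)$-capacity of $\Pi\cap\{2^k\le\abs{x}<2^{k+1}\}$ is comparable to that of the whole annulus, so a Wiener-type series diverges and $\infty$ is regular, whereas for $n=1$ the complement near $\infty$ is empty and $\infty$ is irregular. Turning this into an explicit barrier — a continuous, strictly positive function which is $(s,p)$-superharmonic in $\Omega$ near $\infty$ and vanishes at infinity (for instance a truncated negative power of $\abs{x}$, or a function adapted to the geometry of $\R^n\setminus\Pi$) — and verifying $(s,p)$-superharmonicity for the degenerate operator when $p>2$ is the technical heart; one must also be careful with the precise formulation of the Perron theory and of barriers at the boundary point at infinity (as in the references underlying the last Proposition of this section). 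By contrast, the normalization and the Perron representation above should be routine given the results already established in the paper.
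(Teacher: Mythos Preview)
Your proposal does not address the stated Proposition at all --- that Proposition (boundary regularity of Perron solutions at barrier points) is not proved in the paper but attributed to \cite{LL}. What you have written is an attempt at Theorem~\ref{thm:limit-at-infty}, and I evaluate it as such. Your route differs genuinely from the paper's: rather than a direct Perron representation on the unbounded domain $\R^n\setminus(\Pi\cup\overline{B(0,R)})$ together with a barrier at $\infty$, the paper performs a blow-down $v_t(x)=u(tx)$, extracts subsequential limits $v_\infty$ via Arzel\`a--Ascoli and Theorem~\ref{thm:uniform-holder}, and shows each such limit vanishes by comparing with Perron solutions on the \emph{bounded} slit domain $B_1\setminus\{\text{a closed half-line through }0\}$, exploiting a scaling monotonicity of those solutions, the strong maximum principle on the resulting zero-homogeneous limit, anti-symmetry to extend $(s,p)$-harmonicity across the origin, and finally Liouville (Proposition~\ref{prop:liouville}). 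Every barrier in the paper's argument is at a \emph{finite} boundary point, supplied explicitly by $|x|^{(sp-n)/(p-1)}$ through Proposition~\ref{prop:barrier} and Lemma~\ref{lm:barrier}.

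Your argument has a genuine gap at precisely the step you label the crux. The Perron theory you invoke (\cite{KKP}, \cite{LL}) is set up for bounded open sets with finite boundary; extending the classes $\Ucal_g$, $\Lcal_g$, the harmonicity of $\overline{P}g$, and the barrier criterion to an unbounded $\Omega$ with $\infty\in\partial\Omega$ is not automatic for nonlocal operators, and you do not carry it out. More importantly, you never construct a barrier at $\infty$, and the obvious candidate fails: since $sp>n$, the fundamental solution $|x|^{(sp-n)/(p-1)}$ \emph{grows} at infinity, so there is no evident positive $(s,p)$-superharmonic function in $\Omega$ that decays to $0$ at $\infty$. The paper's blow-down is designed exactly to trade the awkward point $\infty$ for the origin, where the explicit barrier of Proposition~\ref{prop:barrier} applies directly. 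As written, your proposal reduces the theorem to a step --- building a barrier at infinity together with the supporting Perron machinery for unbounded nonlocal problems --- that is of comparable difficulty to Theorem~\ref{thm:limit-at-infty} itself.
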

We need a uniform variant of this property which we state in Proposition \ref{prop:barrier}. First, we construct an explicit barrier for every boundary point when $sp>n$. When $sp<n$, a computation of $(-\Delta_p)^s \abs{x}^\beta$, for a range of values of $\beta$ has been carried out in \cite[Lemma A.2]{BMS}. In particular $\abs{x}^{\frac{sp-n}{p-1}}$ is $(s,p)$-harmonic in $\R^n\setminus \lbrace 0 \rbrace$, see \cite[Theorem A.4]{BMS}. In the recent preprint \cite{DQ}, this computation has also been carried out when $sp\neq n$, furthermore it is shown in \cite{DQ} that $\log(\abs{x})$ is $(s,p)$-harmonic in $\R^n\setminus \lbrace 0 \rbrace$ whenever $sp=n$.  Here for the sake of completeness, we include a computation in the case $sp>n$.

\begin{proposition}\label{prop:barrier}
   Let $sp>n$, then the function $G(x)= \abs{x}^{\frac{sp-n}{p-1}}$ is a classical solution of 
    $$(-\Delta_p)^s G=0 \quad \text{in } \R^n \setminus \lbrace 0 \rbrace .$$
    In particular $G$ is $(s,p)$-harmonic in $\R^n \setminus \lbrace 0 \rbrace$.
\end{proposition}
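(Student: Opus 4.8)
Write $\beta:=\frac{sp-n}{p-1}$, so that $\beta>0$ (since $sp>n$) and $\beta(p-1)=sp-n$. Because $\beta>0$, $G(x)=\abs{x}^\beta$ is continuous on all of $\R^n$, smooth on $\R^n\setminus\{0\}$, and $\abs{G(x)}^{p-1}=\abs{x}^{sp-n}$, so $G\in L^{p-1}_{sp}(\R^n)$; for every $x\neq 0$ the function $y\mapsto \frac{J_p(G(x)-G(y))}{\abs{x-y}^{n+sp}}$ is integrable away from any ball around $x$ and decays like $\abs{y}^{-2n}$ at infinity, so the principal value defining $(-\Delta_p)^sG(x)$ is meaningful, and calling $G$ a \emph{classical solution} means precisely that this principal value is $0$ at every $x\neq0$. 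The first step is to reduce to a single point: since $G$ is radial and the measure $\abs{x-y}^{-n-sp}\dd y$ is rotation invariant, $(-\Delta_p)^sG(x)$ depends only on $\abs{x}$, and the substitution $y=\abs{x}z$ together with the homogeneities $G(\abs{x}z)=\abs{x}^\beta G(z)$ and $J_p(\abs{x}^\beta t)=\abs{x}^{sp-n}J_p(t)$ yields $(-\Delta_p)^sG(x)=\abs{x}^{-n}(-\Delta_p)^sG(\mathrm{e}_1)$. Hence it suffices to show $\lim_{\epsilon\to0}I_\epsilon=0$, where for $0<\epsilon<1$ we set $I_\epsilon:=\int_{\R^n\setminus B(\mathrm{e}_1,\epsilon)}\frac{J_p(1-\abs{y}^\beta)}{\abs{\mathrm{e}_1-y}^{n+sp}}\dd y$.

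The main idea is to exploit the Kelvin-type inversion $\Phi(z):=z/\abs{z}^2$, a smooth involution of $\R^n\setminus\{0\}$. For $\abs{\mathrm{e}_1}=1$ one has the identities $\abs{\mathrm{e}_1-\Phi(z)}=\abs{\mathrm{e}_1-z}/\abs{z}$, $\dd y=\abs{z}^{-2n}\dd z$ under $y=\Phi(z)$, and $1-\abs{\Phi(z)}^\beta=-\abs{z}^{-\beta}(1-\abs{z}^\beta)$, whence $J_p(1-\abs{\Phi(z)}^\beta)=-\abs{z}^{-(sp-n)}J_p(1-\abs{z}^\beta)$; this last identity is where the precise value of $\beta$ enters. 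Substituting $y=\Phi(z)$ into $I_\epsilon$, all powers of $\abs{z}$ cancel because $-(sp-n)+(n+sp)-2n=0$, so I obtain $I_\epsilon=-\int_{\R^n\setminus U_\epsilon}\frac{J_p(1-\abs{z}^\beta)}{\abs{\mathrm{e}_1-z}^{n+sp}}\dd z$, where $U_\epsilon=\Phi(B(\mathrm{e}_1,\epsilon))$; a short computation identifies $U_\epsilon$ as the ball $B\bigl(\tfrac{\mathrm{e}_1}{1-\epsilon^2},\tfrac{\epsilon}{1-\epsilon^2}\bigr)$.

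It remains to compare the two excised balls. Their centres differ by $O(\epsilon^2)$ and their radii differ by $O(\epsilon^3)$, so the symmetric difference $U_\epsilon\triangle B(\mathrm{e}_1,\epsilon)$ is contained in an annulus $\{\epsilon-C\epsilon^2<\abs{z-\mathrm{e}_1}<\epsilon+C\epsilon^2\}$ of measure at most $C\epsilon^{n+1}$. On that annulus $\abs{z-\mathrm{e}_1}\asymp\epsilon$, and since $G$ is Lipschitz near $\mathrm{e}_1$ we have $\abs{1-\abs{z}^\beta}\le C\abs{z-\mathrm{e}_1}$, so the integrand is $O(\epsilon^{p-1-n-sp})$ there; consequently $I_\epsilon=-I_\epsilon+O(\epsilon^{n+1}\cdot\epsilon^{p-1-n-sp})=-I_\epsilon+O(\epsilon^{p(1-s)})$. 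Since $p(1-s)>0$, this forces $\abs{I_\epsilon}\le C\epsilon^{p(1-s)}\to0$, which both shows the principal value exists and that it vanishes; combined with the first step, $(-\Delta_p)^sG\equiv0$ on $\R^n\setminus\{0\}$, and in particular $G$ is $(s,p)$-harmonic there.

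I expect the delicate point to be this last comparison: the principal value is defined through the balls $B(\mathrm{e}_1,\epsilon)$, but the inversion naturally removes the slightly displaced ball $U_\epsilon$, so one has to verify that the mismatch is genuinely lower order — this is precisely where the mild regularity of $G$ near $\mathrm{e}_1$ produces the gain $\epsilon^{p(1-s)}$, and it also explains why the statement needs no restriction on $p$ beyond $sp>n$, even though such restrictions appear in the definition of viscosity solutions for small $p$. (A more computational alternative would be to pass to polar coordinates, integrate out the angular variable, and recognize that the remaining radial integral vanishes exactly for $\beta=(sp-n)/(p-1)$; the inversion argument is shorter and avoids special functions.)
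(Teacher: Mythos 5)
Your argument is correct, and the key idea---the Kelvin inversion $\Phi(z)=z/|z|^2$, under which the excised ball $B(\mathrm{e}_1,\epsilon)$ goes to the nearby ball $B\bigl(\tfrac{\mathrm{e}_1}{1-\epsilon^2},\tfrac{\epsilon}{1-\epsilon^2}\bigr)$ and the kernel changes sign with all powers of $|z|$ cancelling exactly when $\beta=\tfrac{sp-n}{p-1}$---is precisely the one the paper uses. Where you depart from the paper is in the bookkeeping: the paper first splits $\R^n\setminus B(\mathrm{e}_1,\delta)$ along $\partial B(0,1)$, applies the inversion only to the exterior piece, and is then left with a small residual integral over the sliver between the two excised balls inside $B(0,1)$, which it estimates; it also runs a separate one-dimensional change-of-variables argument for $n=1$. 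You instead apply the inversion to the entire truncated integral $I_\epsilon$ in one stroke, obtaining the exact identity $I_\epsilon=-\tilde I_\epsilon$, and then observe that $\tilde I_\epsilon$ and $I_\epsilon$ differ by an integral over the symmetric difference of the two excised balls, an annular shell of thickness $O(\epsilon^2)$ on which the Lipschitz bound $|1-|z|^\beta|\le C|z-\mathrm{e}_1|$ gives the gain $\epsilon^{p(1-s)}$. This yields $2I_\epsilon=O(\epsilon^{p(1-s)})$, so both the existence of the principal value and its vanishing drop out at once. Your version is somewhat tighter and treats $n=1$ and $n\ge2$ uniformly, at the small cost of having to track the displaced ball $U_\epsilon$ explicitly; the paper's version avoids comparing two moving domains by fixing the unit sphere as a reference, but then needs the separate $n=1$ computation and the observation $B(\tfrac{\mathrm{e}_1}{1-\delta^2},\tfrac{\delta}{1-\delta^2})\cap B_1\subset B(\mathrm{e}_1,\delta)\cap B_1$. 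One remark worth keeping: as you note, $\nabla G\neq 0$ away from the origin, so passing from the pointwise vanishing of the principal value to the viscosity property always falls under case \textbf{I} of Definition \ref{def:viscosity2} and no extra restriction on $p$ is needed; that is implicit in the paper as well (via the cited \cite[Lemma 3.8]{KKL}) but it is good that you flagged it.
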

\begin{proof}
    As $G$ is smooth in $\R^n\setminus \lbrace 0 \rbrace$ and $\nabla G(x) \neq 0$ for $x \neq 0$, by \cite[Lemma 3.8]{KKL} the principal value
    $$f(x)= \mathrm{P.V.} \int_{R^n} \frac{J_p(G(x)-G(y))}{\abs{x-y}^{n+sp}} \dd y,$$
    is well defined and continuous in $\R^n \setminus \lbrace 0 \rbrace$. Since $G$ is a radial function, $f$ is also radial. By a scaling argument, it is easy to see that $f$ is homogeneous of degree $-n$. 
    We split the proof into two cases depending on whether $n>1$ or not. 

    \textbf{Case $n=1$}. As $f$ is a radial and homogeneous function of degree $-1$, it is enough to evaluate the integral just at one point.  
    \[
    \begin{aligned}
        f(1)&= \lim_{\delta \to 0^+} \int_{\R\setminus (1-\delta, 1+ \delta)} \frac{J_p\left(1-\abs{y}^{\frac{sp-1}{p-1}} \right)}{\abs{1-y}^{sp+1}} \dd y \\
        &= \lim_{\delta \to 0^+} \left( \int_{(0,\infty) \setminus (1-\delta, 1+\delta)}  \frac{J_p\left(1-y^{\frac{sp-1}{p-1}}\right)}{\abs{1-y}^{sp+1}} \dd y + \int_0^\infty \frac{J_p\left(1-y^{\frac{sp-1}{p-1}}\right) }{\abs{1+y}^{sp+1}} \dd y \right).
    \end{aligned}
    \]
   Notice that for $y>0$, $\frac{1}{(1+1/y)^{sp+1}}= y^{sp+1} \frac{1}{(1+y)^{sp+1}}$. By splitting the integral and change of variables $\rho=1/y$ we obtain:
  
    \begin{equation}\label{eq:foundamental-f1}
    \begin{aligned}
   \int_0^\infty \frac{J_p\left(1-y^{\frac{sp-1}{p-1}}\right) }{\abs{1+y}^{sp+1}} \dd y &= \int_0^1 \frac{J_p\left(1-y^{\frac{sp-1}{p-1}}\right) }{\abs{1+y}^{sp+1}} \dd y + \int_1^\infty \frac{J_p\left(1-y^{\frac{sp-1}{p-1}}\right) }{\abs{1+y}^{sp+1}} \dd y \\
   & =  \int_0^1 \frac{J_p\left(1-y^{\frac{sp-1}{p-1}}\right) }{\abs{1+y}^{sp+1}} \dd y  + \int_0^1 \rho^{sp+1} \frac{J_p\left(1-(1/\rho)^{\frac{sp-1}{p-1}}\right) }{\abs{1+\rho}^{sp+1}} \rho^{-2} \dd \rho \\
   & = \int_0^1 \frac{J_p\left(1-y^{\frac{sp-1}{p-1}}\right) }{\abs{1+y}^{sp+1}} \dd y + \int_0^1 \frac{\rho^{sp-1}}{\rho^{sp-1}} \frac{J_p\left(\rho^{\frac{sp-1}{p-1}} -1\right)}{\abs{1+\rho}^{sp+1}} \dd \rho \\
   &=  \int_0^1 \frac{J_p\left(1-y^{\frac{sp-1}{p-1}}\right) + J_p \left(y^{\frac{sp-1}{p-1}} - 1 \right) }{\abs{1+y}^{sp+1}} \dd y \\
   &= 0
   \end{aligned}
    \end{equation}
   Hence,
    \[
	\begin{aligned}
        f(1)&= \lim_{\delta \to 0^+} \int_{(0,\infty) \setminus (1-\delta, 1+\delta)} \frac{J_p\left(1-y^{\frac{sp-1}{p-1}}\right)}{\abs{1-y}^{sp+1}} \dd y \\
         & = \lim_{\delta \to 0} \left( \int_0^{1-\delta} \frac{J_p\left(1-y^{\frac{sp-1}{p-1}}\right)}{\abs{1-y}^{sp+1}}  \dd y + \int_{1+\delta}^\infty \frac{J_p\left(1-y^{\frac{sp-1}{p-1}}\right)}{\abs{1-y}^{sp+1}}  \dd y  \right).
    \end{aligned}
    \]
    With a change of variable computation as in \eqref{eq:foundamental-f1} we arrive at
    \[
    \begin{aligned}
    f(1) &= \lim_{\delta \to 0} \left( \int_0^{1-\delta} \frac{J_p\left(1-y^{\frac{sp-1}{p-1}}\right)}{\abs{1-y}^{sp+1}}  \dd y -  \int_{0}^{\frac{1}{1+\delta}}  \frac{J_p\left(1-y^{\frac{sp-1}{p-1}}\right)}{\abs{1-y}^{sp+1}}  \dd y  \right) \\
    &=  - \lim_{\delta \to 0^+} \int_{1-\delta}^{\frac{1}{1+\delta}} \frac{J_p\left(1-y^{\frac{sp-1}{p-1}}\right)}{\abs{1-y}^{sp+1}}  \dd y .
    \end{aligned}
    \]
    It remains to show that 
    \begin{equation}\label{eq:foundamental-n1-lim}
    \lim_{\delta \to 0^+} \int_{1-\delta}^{\frac{1}{1+\delta}} \frac{J_p\left(1-y^{\frac{sp-1}{p-1}}\right)}{\abs{1-y}^{sp+1}}  \dd y = 0.
    \end{equation}
     Since $\frac{sp-1}{p-1}<1$, for $0<y<1$ we have 
    $$J_p\left( 1-y^{\frac{sp-1}{p-1}} \right) \leq J_p(1-y).$$
    Hence,
    \[
    \begin{aligned}
    0\leq \int_{1-\delta}^{\frac{1}{1+\delta}} \frac{J_p\left(1-y^{\frac{sp-1}{p-1}}\right)}{\abs{1-y}^{sp+1}} \dd y &\leq
     \int_{1-\delta}^{\frac{1}{1+\delta}} J_p(1-y)(1-y)^{-sp-1} \\
    &=\int_{1-\delta}^{\frac{1}{1+\delta}}  (1-y)^{p-sp-2} \dd y = \int_{\frac{\delta}{1+\delta}}^{\delta} y^{p-sp-2} \dd y\\
    &= \frac{1}{p-sp-1} \left ( - \left( \frac{\delta}{1+\delta} \right)^{p-sp-1} + \delta^{p-sp-1} \right) .
    \end{aligned}
    \]
    Where, in the last line we have assumed $p-sp-1 \neq 0$. If $p-sp-1 > 0$, then both $\delta^{p-sp-1}$ and $\left(\delta/(1+\delta) \right)^{p-sp-1}$ converge to zero as $\delta$ tends to $0$.  If $p-sp-1<0$ then this becomes
    \[
    \begin{aligned}
       \frac{-1}{p-sp-1} \left (\left( \frac{\delta}{1+\delta} \right)^{p-sp-1} - \delta^{p-sp-1} \right) = \frac{1}{sp+1-p}\delta^{p-sp} \left( \frac{(1+\delta)^{sp+1-p} -1}{\delta}  \right).
    \end{aligned}
    \]
    Notice that
    $$\lim_{\delta \to 0}  \frac{(1+\delta)^{sp+1-p} -1}{\delta}  = sp+1-p.$$
    As $p-sp>0$, we obtain
    \[
    \lim_{\delta \to 0} \frac{1}{sp+1-p}\delta^{p-sp} \left( \frac{(1+\delta)^{sp+1-p} -1}{\delta}  \right) = \lim_{\delta \to 0} \delta^{p-sp}=0.
    \]
    Finally if $p-sp-1=0$, then
    \[
    \begin{aligned}
    \int_{1-\delta}^{\frac{1}{1+\delta}}  (1-y)^{p-sp-2} \dd y &= -\ln(1-y) \Bigr|_{1-\delta}^{\frac{1}{1+\delta}}= -\left( \ln\left(\frac{\delta}{1+\delta}\right) -\ln(\delta) \right) \\
    &= \ln(1+\delta).
    \end{aligned}
    \]
    The limit of $ \ln(1+\delta)$ as delta tends to zero is also zero. Hence we have verified \eqref{eq:foundamental-n1-lim}.

    \textbf{Case $n>1$}. Similar to case $n=1$, as $f$ is radial and homogeneous of degree $-n$, it is enough to compute the integral at one point. we write the computation at $\mathrm{e}_1$. We have
    \begin{equation}\label{eq:foundamental-sol-n}
    \begin{aligned}
    f(\mathrm{e}_1) &= \lim_{\delta \to 0} \int_{\R^n \setminus B(\mathrm{e}_1, \delta)} \frac{J_p \left( 1- \abs{y}^{\frac{sp-n}{p-1}}  \right)}{\abs{ \mathrm{e}_1 - y}^{n+sp}} \dd y \\
    &= \lim_{\delta \to 0} \left( \int_{B(0,1)\setminus B(\mathrm{e}_1,\delta)} \frac{J_p \left( 1- \abs{y}^{\frac{sp-n}{p-1}}  \right)}{\abs{ \mathrm{e}_1 - y}^{n+sp}} \dd y  + \int_{B(0,1)^c \setminus B(\mathrm{e}_1,\delta)} \frac{J_p \left( 1- \abs{y}^{\frac{sp-n}{p-1}}  \right)}{\abs{ \mathrm{e}_1 - y}^{n+sp}}  \dd y \right)
    .
    \end{aligned}
    \end{equation}
    Now we make a change of variables $y= \frac{x}{\abs{x}^2}$ in the second integral. The inversion map $x \to \frac{x}{\abs{x}^2}$ maps $B(0,1)\setminus \lbrace 0 \rbrace$ into $B(0,1)^c$ and vice versa. It is also a conformal map and maps spheres that do not pass through the origin into spheres. In particular $B(\mathrm{e}_1,\delta)$ is mapped into $B \left( \frac{1}{1-\delta^2}\mathrm{e}_1,\frac{\delta}{1-\delta^2} \right)$. It is straightforward to compute the Jacobian determinant of the map:
    \[
    \det J_{y}(x) = \det \left( \frac{1}{\abs{x}^2} \mathrm{I}_n -\frac{2}{\abs{x}^4} (1,1, \ldots ,1) \otimes x \right) = \frac{1}{\abs{x}^{2n}}.
    \]
    Hence,
    \[
    \begin{aligned}
    \int_{B(0,1)^c \setminus B(\mathrm{e}_1,\delta)} \frac{J_p \left( 1- \abs{y}^{\frac{sp-n}{p-1}}  \right)}{\abs{ \mathrm{e}_1 - y}^{n+sp}}  \dd y &= \int_{\lbrace \overline{B(0,1)} \setminus \lbrace 0 \rbrace \rbrace \setminus B \left( \frac{1}{1-\delta^2}\mathrm{e}_1,\frac{\delta}{1-\delta^2} \right) } \frac{J_p \left( 1- \absB{\frac{x}{\abs{x}^2}}^{\frac{sp-n}{p-1}}  \right)}{\left| \mathrm{e}_1 - \frac{x}{\abs{x}^2}\right|^{n+sp}} \frac{1}{\abs{x}^{2n}} \dd x \\
    &=\int_{B(0,1) \setminus B \left( \frac{1}{1-\delta^2}\mathrm{e}_1,\frac{\delta}{1-\delta^2} \right)} \frac{\abs{x}^{n-sp}J_p \left( \abs{x}^\frac{sp-n}{p-1}- 1  \right)}{\left| \mathrm{e}_1 -\frac{x}{\abs{x}^2} \right|^{n+sp}} \frac{1}{\abs{x}^{2n}} \dd x \\
    &= -\int_{B(0,1) \setminus B \left( \frac{1}{1-\delta^2}\mathrm{e}_1,\frac{\delta}{1-\delta^2} \right)} \frac{J_p \left( 1- \abs{x}^{\frac{sp-n}{p-1}}  \right)}{\left| \abs{x} \mathrm{e}_1 -\frac{x}{\abs{x}} \right|^{n+sp}} \dd x
    .
    \end{aligned}
    \]
    The reflection through the hyperplane $\Sigma:= \lbrace y \in \R^n \; : \; y\cdot(\mathrm{e}_1 - \frac{x}{\abs{x}})=0 \rbrace$, sends $\abs{x} \mathrm{e}_1 -x$ to $x- \mathrm{e}_1 $. Therefore $\abs{\mathrm{e_1} - x} = \left| \abs{x} \mathrm{e}_1 -\frac{x}{\abs{x}} \right| $. Hence,
    \[
    \int_{B(0,1)^c \setminus B(\mathrm{e}_1,\delta)} \frac{J_p \left( 1- \abs{y}^{\frac{sp-n}{p-1}}  \right)}{\abs{ \mathrm{e}_1 - y}^{n+sp}}  \dd y = -\int_{B(0,1) \setminus B \left( \frac{1}{1-\delta^2}\mathrm{e}_1,\frac{\delta}{1-\delta^2} \right)} \frac{J_p \left( 1- \abs{x}^{\frac{sp-n}{p-1}}  \right)}{\left|  \mathrm{e}_1 -x\right|^{n+sp}} \dd x.
    \]
    Inserting this into \eqref{eq:foundamental-sol-n} we arrive at
    \[
    f(\mathrm{e}_1)= - \lim_{\delta \to 0} \int_{B(0,1)\cap \left \lbrace B(\mathrm{e}_1,\delta) \setminus B \left( \frac{1}{1-\delta^2}\mathrm{e}_1,\frac{\delta}{1-\delta^2} \right) \right \rbrace}  \frac{J_p \left( 1- \abs{x}^{\frac{sp-n}{p-1}}  \right)}{\left|  \mathrm{e}_1 -x\right|^{n+sp}} \dd x.
    \]
    Here we have used that $B \left( \frac{1}{1-\delta^2}\mathrm{e}_1,\frac{\delta}{1-\delta^2} \right) \cap B(0,1) \subset B(\mathrm{e}_1, \delta) \cap B(0,1)$. To see this one can argue that since $\partial B \left( \frac{1}{1-\delta^2}\mathrm{e}_1,\frac{\delta}{1-\delta^2} \right) \cap \partial B(\mathrm{e}_1, \delta) \subset \partial B(0,1)$ and  $(\frac{1}{1+\delta} \mathrm{e}_1) \in \partial B \left( \frac{1}{1-\delta^2}\mathrm{e}_1,\frac{\delta}{1-\delta^2} \right)$ belongs to $B(0,1)\cap B(\mathrm{e}_1, \delta)$ we have $B \left( \frac{1}{1-\delta^2}\mathrm{e}_1,\frac{\delta}{1-\delta^2} \right) \cap B(0,1) \subset B(\mathrm{e}_1, \delta) \cap B(0,1)$. 
    To show that the limit is zero, we observe that $B(0,1)\cap \left \lbrace B(\mathrm{e}_1,\delta) \setminus B \left( \frac{1}{1-\delta^2}\mathrm{e}_1,\frac{\delta}{1-\delta^2} \right) \right \rbrace$ is contained in the following conical ring
    \[
    E_\delta:= \left\lbrace \mathrm{e}_1 -y \; : \; \frac{y\cdot \mathrm{e}_1}{\abs{y}} >\frac{\delta}{2}  \right\rbrace \cap  \left \lbrace B(\mathrm{e}_1,\delta) \setminus  B\left(\mathrm{e}_1,\frac{\delta}{1+ \delta}\right) \right \rbrace.
    \]
    Note also that $E_\delta\subset B(0,1)$. Hence,
    \[
    \int_{B(0,1)\cap \left \lbrace B(\mathrm{e}_1,\delta) \setminus B \left( \frac{1}{1-\delta^2}\mathrm{e}_1,\frac{\delta}{1-\delta^2} \right) \right \rbrace}  \frac{J_p \left( 1- \abs{x}^{\frac{sp-n}{p-1}}  \right)}{\left|  \mathrm{e}_1 -x\right|^{n+sp}} \dd x \leq \int_{E_\delta}  \frac{J_p \left( 1- \abs{x}^{\frac{sp-n}{p-1}}  \right)}{\left|  \mathrm{e}_1 -x\right|^{n+sp}} \dd x.
    \]
    As for any $x \in E_\delta$, $\abs{x}\leq 1 $ and since $\frac{sp-n}{p-1}<1$, we have $1 - \abs{x}^{\frac{sp-n}{p-1}} \leq 1- \abs{x}$. Furthermore using the triangle inequality $1-\abs{x} \leq \abs{\mathrm{e}_1-x}$. Therefore,
    \[
    \begin{aligned}
    \int_{E_\delta} \frac{J_p \left( 1- \abs{x}^{\frac{sp-n}{p-1}}  \right)}{\left|  \mathrm{e}_1 -x\right|^{n+sp}} \dd x &\leq \int_{E_\delta} \frac{\abs{\mathrm{e}_1-x}^{p-1}}{\abs{\mathrm{e}_1-x}^{sp+n}} \dd x \\
    &= \Hcal^{n-2}(\Sbb^{n-2})\int^{\delta}_{\frac{\delta}{1+\delta}} \int_0^{\cos^{-1}(\delta/2)} r^{n-1} r^{p-sp-n-1} \sin^{n-2}{\theta} \dd \theta \dd r \\
    &\leq \frac{\pi}{2} \Hcal^{n-2}(\Sbb^{n-2}) \int_{\frac{\delta}{1+\delta}}^{\delta} r^{p-sp-2} \dd r.
    \end{aligned}
    \]
    As in the case $n=1$, the integral above converges to zero as $\delta$ decreases to zero. Hence, we have established that 
    $f(\mathrm{e}_1)=0$. 
\end{proof}
\begin{lemma}\label{lm:barrier}
    Let $sp >n$, $\Omega$ be an open set and $x_0 \in \partial \Omega$. Assume that $f : \R^n \to \R$ is a bounded function, say $\abs{f(y)} < M$ for all $y \in \R^n$. Assume further that $f$ is identically zero on $B(x_0,r_0)\cap \Omega^c$ for some $r_0>0$. If $0<r_1<r_0$ then for any $x \in \partial \Omega \cap B(x_0,r_1)$ and any $y \in \Omega$ 
    $$ \frac{-M}{\abs{r_0-r_1}^{\frac{sp-n}{p-1}}}\abs{x-y}^{\frac{sp-n}{p-1}} \leq \underline{P}f(y)   \leq \overline{P}f(y) \leq \frac{M}{\abs{r_0-r_1}^{\frac{sp-n}{p-1}}}\abs{x-y}^{\frac{sp-n}{p-1}}.$$
\end{lemma}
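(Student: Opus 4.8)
The plan is to fix an arbitrary $x\in\partial\Omega\cap B(x_0,r_1)$, put $\beta:=\tfrac{sp-n}{p-1}>0$ and
\[
\Psi(z):=\frac{M}{(r_0-r_1)^{\beta}}\,\abs{z-x}^{\beta},
\]
and prove the upper estimate $\overline{P}f(y)\le\Psi(y)$ for all $y\in\Omega$; since $x$ is arbitrary this is the right-hand inequality in the statement. The idea is to exhibit a concrete member of the upper Perron class $\Ucal_f$ that agrees with $\Psi$ on $\Omega$, namely
\[
\tilde v:=\Psi\,\chi_{\Omega}+f\,\chi_{\R^n\setminus\Omega},
\]
so that $\overline{P}f(y)=\inf_{v\in\Ucal_f}v(y)\le\tilde v(y)=\Psi(y)$ on $\Omega$. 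The left-hand inequality then follows by applying this to $-f$ — which is also bounded by $M$ and also vanishes on $B(x_0,r_0)\cap\Omega^c$ — together with the identity $\overline{P}(-f)=-\underline{P}f$, immediate from the definitions; and the middle inequality $\underline{P}f\le\overline{P}f$ is the one recorded right after the definition of Perron solutions.

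First I would note that $\Psi$ is $(s,p)$-harmonic, hence $(s,p)$-superharmonic, in $\R^n\setminus\{x\}$ and in particular in $\Omega$: as $\Omega$ is open and $x\in\partial\Omega$ we have $\Omega\subset\R^n\setminus\{x\}$, and since $(-\Delta_p)^s$ commutes with translations and satisfies $(-\Delta_p)^s(c\,u)=c^{p-1}(-\Delta_p)^s u$ for $c>0$, Proposition \ref{prop:barrier} shows that $\Psi$ is a classical solution of $(-\Delta_p)^s\Psi=0$ in $\R^n\setminus\{x\}$. The pointwise fact that makes $\tilde v$ work is that $\Psi\ge f$ on $\R^n\setminus\Omega$: on $B(x_0,r_0)\cap\Omega^c$ we have $f\equiv 0\le\Psi$, while for $z\in\Omega^c\setminus B(x_0,r_0)$ one has $\abs{z-x}\ge\abs{z-x_0}-\abs{x_0-x}>r_0-r_1$, hence $\Psi(z)>M>f(z)$. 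In particular $\tilde v\le\Psi$ everywhere, with equality on $\Omega$.

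To check $\tilde v\in\Ucal_f$: the requirement $\tilde v=f$ a.e.\ off $\Omega$ and the lower bound $\tilde v\ge-M$ (so $\tilde v$ is bounded below in $\Omega$) are built in. For the boundary inequality one uses continuity of $\Psi$ and $\tilde v=\Psi$ on $\Omega$ to get $\liminf_{\Omega\ni y\to\xi}\tilde v(y)=\Psi(\xi)$ for every $\xi\in\partial\Omega$; if $\xi\in B(x_0,r_0)$ then $f$ vanishes near $\xi$ on $\Omega^c$, so $\limsup_{\Omega^c\ni y\to\xi}f(y)\le0\le\Psi(\xi)$, and if $\xi\notin B(x_0,r_0)$ then $\abs{\xi-x}\ge r_0-r_1$, so $\Psi(\xi)\ge M\ge\limsup_{\Omega^c\ni y\to\xi}f(y)$. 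The one substantial point left is that $\tilde v$ is $(s,p)$-superharmonic in $\Omega$.

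That point — and the only real obstacle — is that passing from $\Psi$ to the pointwise smaller function $f$ outside $\Omega$ preserves $(s,p)$-superharmonicity inside $\Omega$; this is immediate from monotonicity of $J_p$. Let $\phi\in C^2(B(x_1,r))$ with $B(x_1,r)\subset\Omega$ touch $\tilde v$ from below at $x_1\in\Omega$ (and, when $p\le\tfrac{2}{2-s}$, satisfy in addition the conditions in Definition \ref{def:viscosity2}), and let $w$ equal $\phi$ on $B(x_1,r)$ and $\tilde v$ on the complement. Since $\tilde v=\Psi$ on $\Omega\supset B(x_1,r)$ and $\tilde v\le\Psi$ everywhere, we have $w\le\Psi$ on $\R^n$ with $w(x_1)=\phi(x_1)=\Psi(x_1)$, whence $J_p(w(x_1)-w(y))\ge J_p(\Psi(x_1)-\Psi(y))$ for all $y$; integrating over $\R^n\setminus B(x_1,\varepsilon)$ and letting $\varepsilon\to0$ gives
\[
\mathrm{P.V.}\int_{\R^n}\frac{J_p(w(x_1)-w(y))}{\abs{x_1-y}^{n+sp}}\dd y\ \ge\ (-\Delta_p)^s\Psi(x_1)=0,
\]
the last equality by Proposition \ref{prop:barrier}. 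The remaining conditions for $(s,p)$-superharmonicity — lower semicontinuity on $\Omega$, $\tilde v^-\in L^{p-1}_{sp}(\R^n)$, and finiteness — hold because $\tilde v=\Psi\ge0$ is continuous on $\Omega$ and $\tilde v\ge-M$ globally. The hypothesis $sp>n$ enters only through Proposition \ref{prop:barrier}, and the hypothesis $r_1<r_0$ is used precisely in the two-case estimate of $\Psi$ on $\Omega^c$ above.
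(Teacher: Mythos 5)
Your proof is correct and takes essentially the same route as the paper's: both hinge on the explicit barrier $\Psi$ furnished by Proposition \ref{prop:barrier} and on producing a member of the upper Perron class $\Ucal_f$ that is dominated by $\Psi$ on $\Omega$, then handling the lower bound symmetrically. The only variation is in the choice of upper-class member and how its superharmonicity is verified: the paper takes $\min\{\Psi,\, M\chi_\Omega + f\chi_{\Omega^c}\}$ and invokes that a minimum of two $(s,p)$-superharmonic functions is $(s,p)$-superharmonic, whereas you take $\Psi\chi_\Omega + f\chi_{\Omega^c}$ and verify superharmonicity directly from the monotonicity of $J_p$ against $(-\Delta_p)^s\Psi = 0$ — a correct, self-contained substitute that yields the same bound since both functions agree with $\Psi$ on $\Omega$.
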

\begin{proof}
    Consider $x \in \partial \Omega \cap B(x_0,r_1)$ and let 
    $$u(z)= \frac{M}{\abs{r_0-r_1}^{\frac{sp-n}{p-1}}}\abs{z-x}^{\frac{sp-n}{p-1}}.$$
    By Proposition \ref{prop:barrier} $u$ is $(s,p)$-harmonic in $\R^n \setminus \lbrace x \rbrace $ and in particular $u$ is $(s,p)$-harmonic in $\Omega$. As $u\geq 0$ and for any $z \in \Omega^c \cap B(x_0,r_0)$, $ f(z)=0$ we have 
    \begin{equation}\label{eq:uniform-barrier-1}
    u(z ) \geq f(z), \quad \text{for all} \quad z \in \Omega^c \cap B(x_0,r_0) .
    \end{equation}
    For any $z \in B(x_0,r_0)^c$ by the triangle inequality we have $\abs{z-x} \geq r_0-r_1$. Hence,
    \begin{equation}\label{eq:uniform-barrier-2}
    u(z )\geq M \geq f(z), \quad \text{for} \quad z \in  B(x_0,r_0)^c.
    \end{equation}
    Now we choose a function $v \in \Ucal_f$. For example we consider $v(z)= M\chi_{\Omega}(z) + f(z) \chi_{\Omega^c}(z)$. We claim that
    $$w(z):= \min \lbrace u(z), v(z) \rbrace,$$
    belongs to the upper class $\Ucal_f$. As a minimum of two $(s,p)$-superharmonic functions, $w$ is $(s,p)$-superharmonic in $\Omega$. By \eqref{eq:uniform-barrier-1} and \eqref{eq:uniform-barrier-2} $u$ is above $f$ in $\Omega^c$, and since $v$ is equal to $f$ in $\Omega^c$, we have
    $$w(z)=f(z) \quad \text{for all} \quad z \in \Omega^c.$$
    Moreover, $w$ is bounded from below in $\Omega$, in fact it is nonnegative in $\Omega$. And finally for any $z \in \partial \Omega $
    \[
    \begin{aligned}
    \liminf_{\Omega \ni \xi \to z} w(\xi) &= \min \left\lbrace \liminf_{\Omega \ni \xi \to z} v(\xi) , \liminf_{\Omega \ni \xi \to z} u(\xi)  \right \rbrace \\
    &= \min \lbrace M , u(z) \rbrace = \begin{cases}
        M, \quad &\text{if} \quad z \in B(x_0,r_0)^c \cap \partial \Omega \\
        u(z), \quad &\text{if} \quad z \in B(x_0,r_0) \cap \partial \Omega.
    \end{cases} 
    \end{aligned}
    \]
    As $f(z) \leq M $ for all $z \in \R^n$ and by assumption $f$ vanishes in $B(x_0,r_0)\cap \Omega^c$
    \[
    \limsup_{\Omega^c \ni \xi \to z} f(\xi) \leq \begin{cases}
       M, \quad &\text{if} \quad z \in B(x_0,r_0)^c \cap \partial \Omega \\
        0 \leq u(z), \quad &\text{if} \quad z \in B(x_0,r_0) \cap \partial \Omega. 
    \end{cases}
    \]
    Hence, we have verified that $\liminf_{\Omega \ni \xi \to z} w(\xi) \geq  \limsup_{\Omega^c \ni \xi \to z} f(\xi) $. Thus, $w \in \Ucal_f$ and therefore, 
    $$\overline{P}f(y) \leq w(y) \leq u(y), \quad \text{for all} \quad y \in \Omega. $$
    By a similar argument $\max \left \{-u(z), -M \chi_{\Omega}(z) + f(z) \chi_{\Omega^c} \right\}$ belongs to the lower class $\Lcal_f$ and we obtain
    $$-u(y) \leq \underline{P}f(y), \quad \text{for all} \quad y \in \Omega.$$
    \end{proof}
 \subsection{Proof of Theorem \ref{thm:limit-at-infty}}

\begin{proof}
In view of Proposition \ref{prop:uniqueness}, without loss of generality, we may assume that $x_0= \mathrm{e}_n$, $y_0= -\mathrm{e}_n$, and $u(\mathrm{e}_n)= -u(-\mathrm{e}_n)= 1 $. By Propositions \ref{cor:antisymmetry} and \ref{prop:pointwise-bound}, $-1\leq u(x) \leq 1$ and 
\[
u(x)= 0, \quad \text{on } \lbrace x_n=0 \rbrace.
\]
Furthermore, by Lemma \ref{lm:equation},
\[
(-\Delta_p)^s u = 0 \quad \text{in } \R^n \setminus \lbrace \mathrm{e}_n , -\mathrm{e}_n \rbrace.
\]
Consider the following rescaled functions:
$$v_t(x) = u(tx).$$
This family is uniformly bounded. By Theorem \ref{thm:uniform-holder} it is also uniformly equi-continuous on compact subsets of $\R^n \setminus \lbrace 0 \rbrace$. For any sequence $t_j$ converging to infinity, using the Arzela-Ascoli theorem, we can pass to a subsequence such that
\[
v_{t_j} \to v_\infty \quad \text{as } t_j \to \infty, \quad \text{locally uniformly in } \R^n\setminus\lbrace 0 \rbrace
\]
\textbf{Claim:} For all such convergent subsequences,  the limit $v_\infty$ is identically zero.

Once the claim is proved, we can verify that $u$ converges to zero at infinity. If that is not the case, there should exist, $\delta >0$ a sequence $x_j$ such that $\lim_{j \to \infty}\abs{x_j}=\infty$ and 
$$\abs{u(x_j)} >\delta.$$
This means that 
\[
\left| v_{\abs{x_j}}\left( \frac{x_j}{\abs{x_j}} \right) \right | > \delta.
\]
As $\frac{x_j}{\abs{x_j}} \in \partial B(0,1)$, after passing to a subsequence we can find $x_\infty \in \partial B(0,1)$ such that
\[
\lim_{j \to \infty } x_j = x_\infty.
\]
As mentioned before, $v_{\abs{x_j}}$ are uniformly bounded and equi-continuous on compact subsets of $\R^n \setminus \lbrace 0 \rbrace$. Using the Arzela-Ascoli theorem, we can pass to a subsequence again so that $v_{\abs{x_j}}$ converges locally uniformly to a limit $v_\infty$ on $\R^n \setminus \lbrace 0 \rbrace$. Hence,
\[
\delta \leq\lim_{j \to \infty} \left |v_{\abs{x_j}}\left( \frac{x_j}{\abs{x_j}}\right) \right| = v_\infty(x_\infty).
\]
This is in contradiction with our claim. To finish the proof, it only remains to verify the claim. 
\\
\textbf{Proof of the claim} Notice that $v_\infty$ is zero on $\lbrace x_n =0 \rbrace \setminus \lbrace 0 \rbrace$. As $v_j$ converges locally uniformly to $v_\infty$, by the stability property of viscosity solutions, $v_\infty$ is a viscosity solution of
$$(-\Delta_p)^s v_\infty = 0 \quad \text{ in } \R^n \setminus \lbrace 0 \rbrace .$$
The strategy is to show that $v_\infty$ is a solution in the whole $\R^n$. Then an application of the Liouville theorem implies that $v_\infty$ is constant. The main challenge is to show that $v_\infty$ has a limit at the origin.
\\
\textit{Step 1.} We compare $v_\infty$ with the Perron solutions of appropriate complementary values in the domain
$$\Omega := B_1 \setminus \bigl\lbrace \lbrace x_n =x_{n-1}=x_{n-2}= ... = x_2=0 \rbrace \cap \lbrace x_{1} \geq 0 \rbrace \bigr\rbrace. $$

Let $U$ be the following function defined on $\Omega^c$:
\[
\begin{aligned}
 U&= 1 \text{ in } B_1^c \\
 U&= 0 \text{ on } B_1 \cap  \left\lbrace \lbrace x_n =x_{n-1}=x_{n-2}= ... = x_2=0 \rbrace \cap \lbrace 0< x_{1} < 1 \rbrace \right\rbrace \\
 U&= 1 \text{ at } 0 .
 \end{aligned}
 \]
Similarly, we define $L$ by:
\[
\begin{aligned}
 L&= -1 \text{ in } B_1^c \\
 L&= 0 \text{ on } \left\lbrace \lbrace x_n =x_{n-1}=x_{n-2}= ... = x_2=0 \rbrace \cap \lbrace 0< x_{1} < 1 \rbrace \right\rbrace \\
 L&= -1 \text{ at } 0 .
 \end{aligned}
 \]
 Let $h$ be an arbitrary function in $\Ucal_U$. Observe that 
 $$v_\infty(x) \leq 1 \leq h(x) \quad \text{for almost every} \quad   x \in \R^n \setminus \Omega.$$
 At every boundary point $x \in \partial \Omega$
\begin{equation}\label{eq:U-uppersemicontinuity}
\begin{aligned}
 \limsup_{\R^n \setminus \Omega \ni y \to x} &U(y)= U(x)\\
 &=
\begin{cases}
   0 \quad \text{if} \quad x \in B_1\cap  \lbrace x_n =x_{n-1}= ... = x_2=0 \rbrace \cap \lbrace 0< x_{1} < 1 \rbrace  , \\
   1\quad \text{if}\quad x \in \partial B_1 \cup \lbrace 0 \rbrace .
\end{cases}
\end{aligned}
\end{equation}
 Therefore, at every $x \in \partial \Omega$
\[
\liminf_{\Omega \ni y \to x} h(y) \geq\limsup_{\R^n \setminus \Omega \ni y \to x} U(y) =U(x) \geq \limsup_{\Omega \ni y \to x } v_\infty(y).
\]
In the last inequality, we have used that $v_\infty$ is continuous outside of the origin, $v_\infty= 0$ on $\lbrace x_n =0\rbrace \setminus \lbrace 0 \rbrace$, and the bound $-1 \leq v_\infty \leq 1$. As $v_\infty$ is $(s,p)$-harmonic in $\Omega$, by the comparison principle 
 $$v_\infty(x) \leq h(x), \quad x \in \Omega. $$
 Hence, taking infimum over all $h \in \Ucal_U$, 
 $$v_\infty(x) \leq \overline{P}U(x), \quad x \in \Omega.$$
 Similarly,
 $$v_\infty(x) \geq  \underline{P}L(x), \quad x \in \Omega.$$

 We claim that $\overline{P}U$ and $\underline{P}L$ have zero limits at the origin. Before doing that, let us demonstrate some pointwise bounds for $\overline{P}U$.
 
Take $h \in \Ucal_U$, by the comparison we have $h(x)\geq 0$. Hence, $\overline{P}U(x) \geq 0$. As 
$$\chi_{\Omega} +  U(x)\chi_{\R^n \setminus \Omega} \in \Ucal_U,  $$
we have $\overline{P}U(x) \leq 1$  for every $x \in \Omega$ .
\\
\textit{Step 2.} We show that $\overline{P}U$ has radial limits at the origin. For every $0< \rho < 1$ define 
$$V_\rho(x):= \overline{P}U(\rho x).$$
Notice that for every $0<\rho < 1$ we have $V_\rho \leq U$ on $\Omega^c$ and $V_\rho$ is $(s,p)$-harmonic inside $\Omega$. Since $\overline{P}U$ is bounded between zero and one, we have
$$0 \leq V_\rho\leq 1 .$$
By Lemma \ref{lm:barrier}   
$$\lim_{\Omega \ni y \to x} V_\rho( y) =0 \quad \text{for } x \in B_1 \cap \left\lbrace \lbrace x_n =x_{n-1}=x_{n-2}= ... = x_2=0 \rbrace \cap \lbrace 0< x_{1} < \frac{1}{\rho} \rbrace \right\rbrace .$$
Hence, if $0< \rho <1$, by \eqref{eq:U-uppersemicontinuity}, for any $h \in \Ucal_U$ at any point $x \in \partial \Omega$ 
\[
\liminf_{\Omega \ni y \to x} h(y) \geq U(x) \geq \limsup_{\Omega \ni y \to x} V_\rho[] y).
\]
Therefore, by the comparison principle $h(x) \geq V_\rho(x)$ for every $x \in \Omega$,. Hence
$$V_\rho(x) \leq \overline{P}U(x)$$
In particular, this implies that for $x \in \Omega$ the map $\rho \to V_\rho(x)$ is non-decreasing for $\rho \in (0,1)$. Indeed, for $1>\rho_1 >\rho_2>0$ we have
$$V_{\frac{\rho_2}{\rho_1}}(x) \leq \overline{P}U(x) \quad \text{ in } \Omega.$$
Therefore, for $x \in \Omega$ we have
$$V_{\rho_2}(x)=\overline{P}U(\rho_2 x) = V_{\frac{\rho_2}{\rho_1}}(\rho_1 x) \leq \overline{P}U(\rho_1 x)= V_{\rho_1}(x).$$
Hence, the radial limits exist for the function $\overline{P}U$. Let $V_0(x):= \lim_{\rho \to 0} V_\rho(x)$. As a radial limit of a function, $V_0$ is zero-homogeneous, that is, there exists a function $g: \Sbb^{n-1} \to \R$ such that
\begin{equation}\label{eq:limit-zero-homogen}
    V_0(x)=g\left(\frac{x}{\abs{x}} \right), \quad \text{for} \quad x \neq 0.
\end{equation}
\textit{Step 3.} We now show that $\underset{x \to 0}{\lim}\, \overline{P}U=0$. To prove this, we argue towards a contradiction. Let us assume there exists a sequence $x_i\in \Omega$ such that $x_i$ converges to the origin, but $\underset{i \to \infty}{\limsup}\, \overline{P}U(x_i) \neq 0$. By passing to a subsequence, we may further assume that $\frac{1}{2} >\abs{x_i} > \abs{x_{i+1}}$ for every $i\in \mathbb{N}$, and that there exists a $\delta > 0$ such that $\left| \overline{P}U(x_i)\right | > \delta$. We consider the functions $V_{\abs{x_i}}$.  Since $V_{\abs{x_i}}$ is a uniformly bounded sequence ( $0 \leq V_{\abs{x_i}}\leq 1$) and  $V_{\abs{x_i}}$ is an $(s,p)$-harmonic function in  $ B(0, \frac{1}{\abs{x_i}}) \setminus \left\lbrace \lbrace x_n =x_{n-1}=x_{n-2}= ... = x_2=0\rbrace \cap \lbrace x_{1} \geq 0 \rbrace \right\rbrace$, we can pass to a subsequence, such that $V_{\abs{x_i}}$ converges locally uniformly to $V_0$ in \\ $\R^n \setminus\left\lbrace \lbrace x_n =x_{n-1}=x_{n-2}= ... = x_2=0\rbrace \cap \lbrace x_{1} \geq 0 \rbrace \right\rbrace$. By the stability property of viscosity solutions, $V_0$ is  $(s,p)$-harmonic in $\R^n \setminus\left\lbrace \lbrace x_n =x_{n-1}=x_{n-2}= ... = x_2=0\rbrace \cap \lbrace x_{1} \geq 0 \rbrace \right\rbrace$.  

Using Lemma \ref{lm:barrier} 
 with $x_0= \abs{x_i} \mathrm{e}_1$ and $r_0=\frac{3}{2}r_1= \abs{x_i}$, the functions $V_{\abs{x_i}}$ have a uniform modulus of continuity on $ \lbrace x_n =x_{n-1}=x_{n-2}= ... = x_2=0\rbrace \cap \lbrace \frac{1}{2}< x_{1} <\frac{3}{2} \rbrace$. More precisely given $x \in \lbrace x_n =x_{n-1}=x_{n-2}= ... = x_2=0\rbrace \cap \lbrace \frac{1}{2}< x_{1} <\frac{3}{2} \rbrace$ for any $y$ in $B_{1/6}(x)$ we have  
\[
\abs{V_{\abs{x_i}}(y)}=\abs{ \overline{P}U(y\abs{x_i})} \leq \frac{1}{(\abs{x_i}- \frac{2}{3}\abs{x_i})^{\frac{sp-n}{p-1}}} \left|(y-x)\abs{x_i}\right|^\frac{sp-n}{p-1} = 3^\frac{sp-n}{p-1} \abs{y-x}^\frac{sp-n}{p-1}.
\]
Hence, the convergence $V_{\abs{x_i}} \to V_0$ is localy uniform in $B(0,3/2)\setminus B(0,1/2)$, and $V_0$ is continuous in $B(0,3/2)\setminus B(0,1/2)$. Therefore, by \eqref{eq:limit-zero-homogen}, $g$ is continuous on $\Sbb^{n-1}$. Now we demonstrate that $g \equiv 0$. Recall that $g(\mathrm{e}_1)= v_0(\mathrm{e}_1)=0$. As a continuous function, the maximum and minim of $g$ is achieved on $\Sbb^{n-1}$. If $g$ is not constant either the minimum or the maximum of $g$ is non zero, and hence it can not be achieved at $\mathrm{e}_1$. Assume that the minimum of $g$ is achieved at $\tilde{x} \neq \mathrm{e}_1$. As $V_0$ is zero-homogeneous, $V_0(\tilde{x})$ is the essential minimum of $V_0$. By Lemma \ref{lm:viscosity-evaluation}
\begin{equation}\label{eq:limit-v_0-evaluation}
0 \leq  \int_{\R^n} \frac{V(\tilde{x})-V_0(y)}{\abs{\tilde{x}-y}^{n+sp}} \dd y . 
\end{equation}
Since, $V(\tilde{x}) \leq V(y)$ for all $y \neq 0$, \eqref{eq:limit-v_0-evaluation} implies that $V_0$ is constant outside of the origin. As $V_0$ is zero on the half line $\lbrace x_n =x_{n-1}=x_{n-2}= ... = x_2=0\rbrace \cap \lbrace x_{1} > 0 \rbrace$, we must have 
$$V_0(x)=0 \quad \text{for} \quad x \neq 0.$$
    Now by assumption, we have
$$\overline{P}U(x_i)= \left|V_{\abs{x_i}}\left(\frac{x_i}{\abs{x_i}} \right) \right| > \delta$$
    After passing to a subsequence, we may assume that 
    $$\lim_{i \to \infty} \frac{x_i}{\abs{x_i}} =x_0 \in \Sbb^{n-1}. $$
    But the  uniform convergence of $V_{\abs{x_1}} $ to $V_0$ in $B(0,2) \setminus B(0, \frac{1}{2})$ leads to 
    $$\lim_{i \to \infty} V_{\abs{x_i}}\left(\frac{x_i}{\abs{x_i}} \right) = V_0(x_0)=0. $$
    This contradiction shows that the assumption about $\overline{P}U(x_i)$ was wrong and hence,  
    $$\lim_{\Omega \ni x \to 0} \overline{P}U(x)=0 .$$
    In a similar way, we can show that 
    $$\lim_{\Omega \ni x \to 0} \underline{P}L(x)=0.$$
    As $v_\infty$ is trapped between $\overline{P}U$ and $\underline{P}L$ we arrive at
    $$\lim_{ x \to 0} v_{\infty}(x)=0.$$
    Hence, modifying the value of $v_\infty$ at the origin so that $v_\infty(0)=0$, we have established the continuity of $v_\infty$ at the origin, in the next step, we show that $v_\infty$ is $(s,p)$-harmonic in the whole $\R^n$.\\
    \textit{Step 4.} We already know that $v_\infty$ is a viscosity solution of 
    $$(-\Delta_p)^s v_\infty(x) = 0, \quad \text{in} \quad \R^n \setminus \lbrace 0 \rbrace.$$
    We use the anti-symmetry of $v_\infty$ to verify the equation at the origin. We have
    \begin{equation}\label{eq:limit-antisymmetry-PV}
    \begin{aligned}
    (-\Delta_p)^s v_\infty(0 )= \mathrm{P.V.} \int_{\R^n} \frac{J_p(v_\infty(0) - v_\infty(y))}{\abs{0-y}^{n+sp}} \dd y = \lim_{\epsilon \to 0 } \int_{\R^n \setminus B(0,\epsilon)} \frac{-J_p(v_\infty(y))}{\abs{y}^{n+sp}} \dd y = 0.
    \end{aligned}
    \end{equation}
    The last equality is due to the anti-symmetry of $v_\infty$. 
    Now, we formally verify that test functions touching $v_\infty$ at the origin satisfy the equation. 
    As $n \geq 2$ and $sp>n \geq 2$ we are in the range $p > \frac{2}{2-s}$. Consider $\phi \in C^2(B(0,r))$ for some $r>0$, such that $\phi(0)=v_\infty(0)=0$ and 
    $$\phi(x) \leq v_\infty(x) \quad \text{for} \quad x \in B(0,r).$$
    Let 
    \[
    w(x):= \begin{cases}
        \phi(x) \quad &\text{for} \quad x \in B(0,r), \\
        v_\infty (x) \quad &\text{for} \quad x \in \R^n \setminus B(0,r).
    \end{cases}
    \]
    Then, using \eqref{eq:limit-antisymmetry-PV}
    \[
    \begin{aligned}
    \mathrm{P.V.} \int_{\R^n} \frac{J_p(w(0) - w(y))}{\abs{0-y}^{n+sp}} \dd y &= \mathrm{P.V.} \int_{\R^n} \frac{-J_p(  w(y))}{\abs{y}^{n+sp}} \dd y \\
    & \geq \mathrm{P.V.} \int_{\R^n} \frac{-J_p(  v_{\infty}(y))}{\abs{y}^{n+sp}} \dd y = 0.
    \end{aligned}
    \]
    Similarly, for a test function $\phi$ touching $v_\infty$ from above at the origin, define
    \[
    w(x):= \begin{cases}
        \phi(x) \quad &\text{for} \quad x \in B(0,r), \\
        v_\infty (x) \quad &\text{for} \quad x \in \R^n \setminus B(0,r).
    \end{cases}
    \]
    We easily verify
    $$\mathrm{P.V.} \int_{\R^n} \frac{J_p(w(0) - w(y))}{\abs{0-y}^{n+sp}} \dd y \leq 0.$$
    \textit{Step 5.} Conclusion. Since we have established that $v_\infty$ is $(s,p)$-harmonic in the whole $\R^n$, by Liouville's theorem $v_\infty$ is constant. As $v_\infty$ is zero on the hyperplane $\lbrace x_n=0 \rbrace$, $v_\infty$ is identically zero and the claim is proved. 
\end{proof}

\section{Non-vanishing of extremals in the half-spaces}
Here we argue that the extremals have a sign in each half-space above and below the affine hyperplane of anti-symmetry. 
The argument is based on a maximum principle for anti-symmetric functions. See for example \cite{CeL}.
\begin{proposition}
    Let $u$ be a non-constant Morrey extremal with
    $$[u]_{C^{s-\frac{n}{p}}}= \frac{\abs{u(x_0) - u(y_0)}}{\abs{x_0-y_0}^{s-\frac{n}{p}}}.$$
    Then $u(x)- \frac{1}{2} \left( u(x_0) + u(y_0) \right)$ does not change sign (and does not vanish) in the regions 
    \[
    \Sigma^+ :=\left \lbrace x\in \R^n \; : \; \left(x-\frac{1}{2}(x_0+y_0)\right)\cdot (x_0-y_0)>0   \right \rbrace ,
    \]
    and
    \[
    \Sigma^- :=\left \lbrace x\in \R^n \; : \; \left(x-\frac{1}{2}(x_0+y_0)\right)\cdot (x_0-y_0)<0   \right \rbrace ,
    \]
\end{proposition}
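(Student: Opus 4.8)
The plan is to reduce to a normalized configuration and then run a maximum principle for anti-symmetric $(s,p)$-superharmonic functions, built from the pointwise evaluation of the operator at an interior minimum (Lemma \ref{lm:viscosity-evaluation}) together with a reflection argument.

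\textbf{Normalization and reductions.} By Proposition \ref{prop:uniqueness} (equivalently, by the invariances of the two seminorms listed in the introduction) we may assume $x_0 = \mathrm{e}_n$, $y_0 = -\mathrm{e}_n$ and $u(\mathrm{e}_n) = -u(-\mathrm{e}_n) = 1$; then $\tfrac12(u(x_0)+u(y_0)) = 0$, $\Sigma^+ = \{x_n>0\}$ and $\Sigma^- = \{x_n<0\}$. We will use that $u$ is $(s,p)$-harmonic in $\R^n\setminus\{\mathrm{e}_n,-\mathrm{e}_n\}$ (Lemma \ref{lm:equation}); that $-1\le u\le 1$ on $\R^n$ (Proposition \ref{prop:pointwise-bound}); that $u(\bar x)=-u(x)$ where $\bar x := x - 2(x\cdot\mathrm{e}_n)\mathrm{e}_n$ is reflection across $\{x_n=0\}$, so in particular $u=0$ on $\{x_n=0\}$ (Proposition \ref{cor:antisymmetry}); and that $u(x)\to 0$ as $\abs{x}\to\infty$ (Theorem \ref{thm:limit-at-infty}, the one place $n\ge 2$ enters). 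Since $u(\bar x)=-u(x)$, it suffices to prove $u>0$ on $\{x_n>0\}$.

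\textbf{The tool.} Suppose $z$ lies in an open set where $u$ is $(s,p)$-superharmonic and $u(z)$ is a local minimum of $u$ there; Lemma \ref{lm:viscosity-evaluation} gives $\int_{\R^n}\abs{z-y}^{-(n+sp)}J_p(u(z)-u(y))\dd y\ge 0$. If moreover $z\in\{x_n>0\}$, we split this into reflected pairs $y\leftrightarrow\bar y$ over $\{x_n>0\}$: using $u(\bar y)=-u(y)$ and the identity $\abs{z-\bar y}^2 = \abs{z-y}^2 + 4(z\cdot\mathrm{e}_n)(y\cdot\mathrm{e}_n)$ (so $\abs{z-\bar y}>\abs{z-y}$ whenever $y,z\in\{x_n>0\}$) we obtain
\[
0 \le \int_{\{x_n>0\}} \Bigl( K_1(y)\,J_p\bigl(u(z)-u(y)\bigr) + K_2(y)\,J_p\bigl(u(z)+u(y)\bigr)\Bigr)\dd y ,
\]
where $K_1(y)=\abs{z-y}^{-(n+sp)} > K_2(y)=\abs{z-\bar y}^{-(n+sp)} > 0$ on $\{x_n>0\}$; the (at worst conditionally convergent) splitting is justified exactly as in the proof of Lemma \ref{lm:viscosity-evaluation}, integrating first over $\R^n\setminus B(z,r)$ and letting $r\to 0$ by monotone convergence (the reflected piece is absolutely convergent).

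\textbf{Step 1: $u\ge 0$ on $\{x_n>0\}$.} If not, set $m := \inf_{\{x_n>0\}}u<0$. A minimizing sequence cannot run to infinity (there $u\to 0$) nor to $\{x_n=0\}$ (there $u=0$), so $m$ is attained at some $z_0\in\{x_n>0\}$, and $z_0\ne\mathrm{e}_n$ since $u(\mathrm{e}_n)=1>m$; thus $z_0$ is a local minimum of $u$ in $\{x_n>0\}\setminus\{\mathrm{e}_n\}$, where $u$ is $(s,p)$-harmonic. Apply the tool with $z=z_0$, $u(z_0)=m$, and write $a=m-u(y)\le 0$, $b=m+u(y)$. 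A short case check shows $K_1J_p(a)+K_2J_p(b)\le 0$ on $\{x_n>0\}$, strictly where $u(y)\ne 0$: if $u(y)<0$ then $J_p(a)\le 0$ and $J_p(b)<0$; if $u(y)\ge 0$ then $\abs a\ge\abs b$ and $J_p(a)\le 0$, so $K_1J_p(a)+K_2J_p(b)\le K_2(J_p(a)+J_p(b))\le 0$ with equality only when $u(y)=0$. Hence the integral is $\le 0$; combined with the reverse inequality it vanishes, forcing $u\equiv 0$ almost everywhere on $\{x_n>0\}$, contradicting $u(\mathrm{e}_n)=1$ and continuity. So $u\ge 0$ on $\{x_n>0\}$.

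\textbf{Step 2: strict positivity and conclusion.} Suppose $u(z)=0$ for some $z\in\{x_n>0\}$; then $z\ne\mathrm{e}_n$ and, by Step 1, $u(z)$ is a local minimum. The tool, using $J_p(u(z)\pm u(y)) = \mp J_p(u(y))$, gives
\[
0 \le \int_{\{x_n>0\}} \bigl(K_2(y)-K_1(y)\bigr)\,J_p\bigl(u(y)\bigr)\dd y \le 0 ,
\]
since $J_p(u(y))\ge 0$ on $\{x_n>0\}$ and $K_2-K_1<0$ there; hence $J_p(u)\equiv 0$ almost everywhere on $\{x_n>0\}$, i.e. $u\equiv 0$ almost everywhere, a contradiction once more. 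Thus $u>0$ on $\{x_n>0\}=\Sigma^+$, and by anti-symmetry $u<0$ on $\{x_n<0\}=\Sigma^-$; undoing the normalization yields that $u-\tfrac12(u(x_0)+u(y_0))$ is strictly positive on $\Sigma^+$ and strictly negative on $\Sigma^-$.

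The main obstacle is Step 1 — keeping the sign bookkeeping in the reflected integrand straight, and making sure the splitting into reflected pairs is legitimate for the possibly only conditionally convergent integral. Everything else is an assembly of results already proved, the one essential new input being the limit at infinity, which is precisely why the statement is restricted to $n\ge 2$.
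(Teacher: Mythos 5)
Your proof is correct and follows essentially the same route as the paper: normalize so that $x_0=\mathrm{e}_n$, $y_0=-\mathrm{e}_n$, $u(\mathrm{e}_n)=-u(-\mathrm{e}_n)=1$; use anti-symmetry, $\abs{u}\le 1$, the limit at infinity, and $(s,p)$-harmonicity off $\{\pm\mathrm{e}_n\}$; locate an interior minimizer in $\Sigma^+$; apply Lemma \ref{lm:viscosity-evaluation} there; fold the integral over the hyperplane using anti-symmetry and the kernel inequality $\abs{z-y}<\abs{z-\tilde y}$ for $z,y\in\Sigma^+$; and conclude the resulting integral is negative, a contradiction. The paper handles both the $\lambda^+<0$ and $\lambda^+=0$ cases in one pass by decomposing the folded integral as $\Ical_1+\Ical_2$ and showing $\Ical_1<0$, $\Ical_2\le 0$; your two-step split (first $u\ge 0$, then strict positivity) is an equivalent rearrangement of the same sign analysis, and your observation that the $\Sigma^-$ piece is absolutely convergent, making the splitting legitimate, is worth retaining. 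One small bookkeeping slip in Step 1 is immaterial but worth fixing: you assert that $K_1J_p(a)+K_2J_p(b)\le 0$ "with equality only when $u(y)=0$," but in fact when $m<0$ and $u(y)=0$ one has $a=b=m$, so the integrand equals $(K_1+K_2)J_p(m)<0$; the integrand is strictly negative everywhere in $\Sigma^+$, which gives the contradiction directly (the subsequent "forcing $u\equiv0$" clause is unnecessary and rests on the incorrect equality characterization). The rest — in particular the handling of Step 2 using $J_p(\pm u(y))=\pm J_p(u(y))$ and $K_2-K_1<0$, which mirrors the paper's $\Ical_2$ estimate — is sound.
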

\begin{proof}
    Without loss of generality, we may assume $x_0=\mathrm{e}_n$, $y_0=-\mathrm{e}_n$, and $u(\mathrm{e}_n)=-u(-\mathrm{e}_n)=1$. We prove that $u$ is positive in 
    $$\Sigma^+ = \left \lbrace x \in \R^n \, : \, x \cdot \mathrm{e}_n>0  \right \rbrace.$$
    The argument for $u$ being negative in $\Sigma^-$ is similar. 
    From Proposition \ref{cor:antisymmetry} we know that $u$ is anti-symmetric with respect to 
    $$\Pi= \left\lbrace  x \in \R^n \, :\, x \cdot \mathrm{e}_n=0 \right \rbrace.$$
     Now consider
    \[
    \lambda^+= \inf_{x \in \Sigma^+} u(x).
    \]
    Since $u$ tends to zero at infinity and vanishes on $\Pi= \partial \Sigma^+$, 
    $$-\infty < \lambda^+ \leq 0.$$
    Our claim that $u$ is non-negative is equivalent to showing that $\lambda^+=0$. Note that since $u$ tends to zero at infinity, if $\lambda^+<0$ then there should exist a point $z_0 \in \Sigma^+$ such that
    $$u(z_0) = \lambda^+ .$$
    If $\lambda^+=0$, our claim is that $u(x)>0$, for all $x\in \Sigma^+$. Thus, to verify our claim, it is enough to show that there is no point $z_0 \in \Sigma^+$ so that $u(z_0)= \lambda^+$. For the sake of contradiction assume that 
    $$u(z_0)= \min_{x\in \Sigma^+} u(x).$$
    Obviously $z_0 \neq \mathrm{e}_n$. Now we use the equation for $u$. Recall that $u$ is a viscosity solution of 
    $$(-\Delta_p)^s u(x) =0 \quad \text{for}\quad x \in \R^n \setminus \lbrace \mathrm{e}_n,-\mathrm{e}_n \rbrace.$$
    As $u(z_0)$ is a local minimum for $u$, Lemma \ref{lm:viscosity-evaluation} implies
    \[
    \int_{\R^n} \frac{J_p(u(z_0)- u(y))}{\abs{z_0-y}^{n+sp}} \dd y \geq 0.
    \]
    We split the integral into two parts. For any $y \in \R^n$ we define $\tilde{y}:= y-2 y\cdot \mathrm{e}_n$. With this notation we have
    \begin{equation}\label{eq:nonvanish-split}
    \begin{aligned}
        0 \leq \int_{\R^n} \frac{J_p(u(z_0)- u(y))}{\abs{z_0-y}^{n+sp}} \dd y  &= \int_{\Sigma^+}  \frac{J_p(u(z_0)- u(y))}{\abs{z_0-y}^{n+sp}} \dd y  + \int_{\Sigma^-} \frac{J_p(u(z_0)- u(y))}{\abs{z_0-y}^{n+sp}} \dd y  \\
        &=  \int_{\Sigma^+} \frac{J_p(u(z_0)- u(y))}{\abs{z_0-y}^{n+sp}} \dd y  + \int_{\Sigma^+} \frac{J_p(u(z_0)- u(\tilde{y}))}{\abs{z_0-\tilde{y}}^{n+sp}} \dd y \\
        &= \int_{\Sigma^+} \frac{J_p(u(z_0)- u(y))}{\abs{z_0-y}^{n+sp}} \dd y  - \int_{\Sigma^+} \frac{J_p(-u(z_0)+ u(\tilde{y}))}{\abs{z_0-\tilde{y}}^{n+sp}} \dd y \\
        \text{(using anti-symmetry)}\quad&= \int_{\Sigma^+} \frac{J_p(u(z_0)- u(y))}{\abs{z_0-y}^{n+sp}} \dd y - \int_{\Sigma^+} \frac{J_p(-u(z_0) - u(y))}{\abs{z_0-\tilde{y}}^{n+sp}} \dd y \\
        &= \int_{\Sigma^+} J_p(u(z_0)- y(y))\left( \frac{1}{\abs{z_0-y}^{n+sp}}   - \frac{1}{\abs{z_0-\tilde{y}}^{n+sp}}\right) \dd y \\
        &\quad + \int_{\Sigma^+} \frac{J_p(u(z_0)- u(y)) - J_p(-u(z_0) - u(y))}{\abs{z_0-\tilde{y}}^{n+sp}} \dd y \\
        &= \int_{\Sigma^+} J_p(u(z_0)- y(y))\left( \frac{1}{\abs{z_0-y}^{n+sp}}   - \frac{1}{\abs{z_0-\tilde{y}}^{n+sp}}\right) \dd y \\
        &\quad + \int_{\Sigma^+} \frac{J_p(u(z_0)- u(y)) - J_p(-u(z_0) - u(y))}{\abs{\tilde{z}_0-y}^{n+sp}} \dd y \\
        &:= \Ical_1 + \Ical_2
    \end{aligned}
    \end{equation}
    Now we aim to show that $\Ical_1 <0$ and $\Ical_2 \leq0$ so that we reach a contradiction. 
    First, notice that as $z \in \Sigma^+$, 
    $$z_0\cdot(y-\tilde{y}) = 2\abs{y-\tilde{y}} z_0\cdot \mathrm{e}_n >0.$$
    Hence,
    \[
    \abs{z_0 - y}^2 = \abs{z_0}^2 + \abs{y}^2 - 2 z_0 \cdot y < \abs{z_0}^2 + \abs{\tilde{y}}^2 - 2 z_0 \cdot \tilde{y} = \abs{z_0-\tilde{y}}^2 .
    \]
    Therefore, 
    \[
    \frac{1}{\abs{z_0-y}^{n+sp}}   - \frac{1}{\abs{z_0-\tilde{y}}^{n+sp}} >0 .
    \]
    As $u(z_0)$ is the minimum of $u$ on $\Sigma^+$, 
    $$J_p(u(z_0)-u(y)) \leq 0 \quad \text{for } y \in \Sigma^+ . $$
    Since $u$ is not constant on $\Sigma^+$ we arrive at the strict inequality
    \[
    \Ical_1= \int_{\Sigma^+} J_p(u(z_0)- y(y))\left( \frac{1}{\abs{z_0-y}^{n+sp}}   - \frac{1}{\abs{z_0-\tilde{y}}^{n+sp}}\right) \dd y <0.
    \]
    As
    \[
    0\geq \lambda^+=u(z_0),
    \]
    we have
    \[
    u(z_0)-u(y) \leq -u(z_0)-u(y).
    \]
    Since $J_p(t)$ is a monotone function, we get 
    \[
    J_p(u(z_0)- u(y)) - J_p(-u(z_0) - u(y)) \leq 0.
    \]
    Hence,
     \[
    \Ical_2=  \int_{\Sigma^+} \frac{J_p(u(z_0)- u(y)) - J_p(-u(z_0) - u(y))}{\abs{\tilde{z_0}-y}} \dd y \leq 0.
     \]
\end{proof}



\providecommand{\bysame}{\leavevmode\hbox to3em{\hrulefill}\thinspace}
\providecommand{\MR}{\relax\ifhmode\unskip\space\fi MR }
\providecommand{\MRhref}[2]{%
  \href{http://www.ams.org/mathscinet-getitem?mr=#1}{#2}
}
\providecommand{\href}[2]{#2}

\end{document}